% !TeX spellcheck = en_US
\documentclass[10pt]{article}
\usepackage{lmodern}
\usepackage{amsmath}
\usepackage[T1]{fontenc}
\usepackage[utf8]{inputenc}
\usepackage{authblk}
\usepackage{amsfonts}
\usepackage{graphicx}
\usepackage{rotating}
\usepackage{amssymb}
\usepackage[english]{babel}
\usepackage{xcolor}
\usepackage{amsthm}
\usepackage{graphicx}
\usepackage{mathrsfs}
\usepackage{makecell}
\usepackage{microtype}
\usepackage{mathscinet}
\usepackage{array}
\usepackage{multirow}
\usepackage{enumerate}
\usepackage[cal=boondoxo,bb=ams]{mathalfa}
\usepackage{hyperref}
\hypersetup{hidelinks}
\usepackage{booktabs}%table
%\allowdisplaybreaks[4]

\newtheorem{defn}{Definition}[section]
\newtheorem{theorem}{Theorem}[section]
\newtheorem{prop}{Proposition}[section]

\newtheorem{remark}{Remark}[section]

\newcommand{\ml}{\mathcal}
\newcommand{\mb}{\mathbb}

\DeclareMathOperator{\lin}{lin}
\DeclareMathOperator{\nlin}{nlin}

\def\XXint#1#2#3{{\setbox0=\hbox{$#1{#2#3}{\int}$ }
		\vcenter{\hbox{$#2#3$ }}\kern-.6\wd0}}

\title{Global in-time existence of solutions for the complex-valued Jordan-Moore-Gibson-Thompson equations of Westervelt-type under different conditions on initial data}
\author[1]{Wenhui Chen\thanks{Wenhui Chen (wenhui.chen.math@gmail.com)}}
\affil[1]{School of Mathematics and Information Science, Guangzhou University,\authorcr 510006 Guangzhou, P.R. China}

\date{}

\setlength{\topmargin}{-10mm}
\setlength{\textwidth}{7in}
\setlength{\oddsidemargin}{-8mm}
\setlength{\textheight}{8.5in}
\setlength{\footskip}{1in}

\begin{document}
		\maketitle

		\begin{abstract}
			\medskip
We are interested in the global in-time existence of solutions for the complex-valued Jordan-Moore-Gibson-Thompson (JMGT) equations of Westervelt-type, namely,
\begin{align*}
\tau\partial_t^3\psi+\partial_t^2\psi+\mathcal{A}\psi+(\delta+\tau)\mathcal{A}\partial_t\psi=(1+\tfrac{B}{2A})\partial_t[(\partial_t\psi)^2]
\end{align*}
in the whole space $\mb{R}^n$, with $\tau,\delta,\frac{B}{A}\in\mb{R}_+$ and the fractional Laplacian $\ml{A}:=(-\Delta)^{\sigma}$ equipping $\sigma\in\mb{R}_+$. Our aims are twofold. For one thing, by considering the rough initial data with their Fourier support restrictions in a suitable subset of first octant, we demonstrate a global in-time existence result without requiring the smallness of initial data. For another, by removing these Fourier support restrictions, we prove another global in-time existence result for the equivalent strongly coupled JMGT systems, where the real and imaginary parts of initial data, respectively, belong to regular Sobolev spaces with different additional Lebesgue integrabilities.
			\\
			
\noindent\textbf{Keywords:} Jordan-Moore-Gibson-Thompson equation, nonlinear acoustics, semilinear third-order hyperbolic equation, global in-time existence of solution, regularity of initial data, smallness of initial data\\
			
\noindent\textbf{AMS Classification (2020)} 35L76, 35L30, 35A01, 42B35, 35R11
		\end{abstract}
\fontsize{12}{15}
\selectfont

\section{Introduction}\setcounter{equation}{0}\label{Section-Introduction}
\hspace{5mm}In this paper, we consider the following complex-valued Jordan-Moore-Gibson-Thompson (JMGT) equations of Westervelt-type, arising from the nonlinear acoustics in thermoviscous fluids (that will be introduced in Subsection \ref{Sub-Section-Real-Value-JMGT}):
\begin{align}\label{Eq-Complex-JGMT-W}
\begin{cases}
\tau\partial_t^3\psi+\partial_t^2\psi+\mathcal{A}\psi+(\delta+\tau)\mathcal{A}\partial_t\psi=(1+\frac{B}{2A})\partial_t[(\partial_t\psi)^2],&x\in\mb{R}^n,\ t\in\mb{R}_+,\\
\psi(0,x)=\psi_0(x),\ \partial_t\psi(0,x)=\psi_1(x),\ \partial_t^2\psi(0,x)=\psi_2(x),&x\in\mb{R}^n,
\end{cases}
\end{align}
with the thermal relaxation $\tau\in\mb{R}_+$, the diffusivity of sound $\delta\in\mb{R}_+$, the constant ratio $\frac{B}{A}\in\mb{R}_+$ for nonlinearities in a given flow, and the fractional operator $\ml{A}:=(-\Delta)^{\sigma}$ carrying $\sigma\in\mb{R}_+$, where the complex-valued scalar generalized function $\psi=\psi(t,x)\in\mb{C}$ stands for the acoustic velocity potential. In the above statement, the fractional Laplacian $\ml{A}$ is defined via
\begin{align*}
	\langle \ml{A}f,\phi\rangle=\langle f,\ml{A}\phi\rangle\  \mbox{for}\  f\in\ml{S}'\  \mbox{and}\  \phi\in\ml{S}
\end{align*}
with the Schwartz space $\ml{S}$ and its dual space $\ml{S}'$, where $\ml{A}\phi=(-\Delta)^{\sigma}\phi:=\ml{F}^{-1}(|\xi|^{2\sigma}\widehat{\phi}(\xi))$ with $\sigma\in\mb{R}_+$. Note that the Fourier transform and its inverse transform are, respectively, written by $\ml{F}$ and $\ml{F}^{-1}$. This manuscript mainly studies the global in-time existence of solutions for the JMGT equations \eqref{Eq-Complex-JGMT-W} with
\begin{itemize}
	\item  the initial data belonging to rough spaces without requiring their small size (i.e. the so-called rough large data) whose Fourier supports localize in a suitable subset of first octant;
	\item  the initial data belonging to regular spaces requiring their small size (i.e. the so-called regular small data) but without Fourier support restrictions.
\end{itemize}
In other words, we are interested in some influence from different conditions of initial data on the global in-time solutions for the complex-valued nonlinear acoustics models \eqref{Eq-Complex-JGMT-W}.

 \subsection{Function spaces and distribution spaces}\label{Sub-Section-Function-space}
 \hspace{5mm}In order to clearly introduce the background of JMGT equations \eqref{Eq-Complex-JGMT-W} as well as clarify our motivations, in this subsection, let us firstly recall some spaces for initial data and solutions that will be used frequently.
 
 As usual, $L^m$ is referred to the Lebesgue space (integrability) endowed with its norm $\|f\|_{L^m}$ for $m\in[1,+\infty]$. The Lebesgue mixed space $L_t^{\gamma}L_x^m$ endows with its norm
 \begin{align*}
 	\|g\|_{L^{\gamma}_tL^m_x}:=\big\|\|g(t,\cdot)\|_{L^m_x}\big\|_{L^{\gamma}_t}
 \end{align*}
 carrying $\gamma,m\in[1,+\infty]$. The homogeneous [resp. inhomogeneous] Sobolev space is defined via $\dot{H}^s:=|D|^{-s}L^2$ [resp. $H^s:=\langle D\rangle^{-s}L^2$] with any $s\in\mb{R}$. The symbols of differential operators $|D|^s$ and $\langle D\rangle^s$ are, respectively, $|\xi|^s$ and $\langle \xi\rangle^s$ carrying the Japanese bracket $\langle\xi\rangle:=\sqrt{1+|\xi|^2}$ for $s\in\mb{R}$.
 
 Let us recall the Gelfand-Shilov space of Beurling-type $\ml{S}_{\mathrm{G-S}}$ (cf. \cite{Gelfand-Shilov=1968}) that is a Fr\'echet space defined via
 \begin{align*}
 	\ml{S}_{\mathrm{G-S}}:=\left\{f\in\ml{S}: \sup\limits_{x\in\mb{R}^n}\big(\mathrm{e}^{\lambda|x|}|f(x)|\big)+\sup\limits_{\xi\in\mb{R}^n}\big(\mathrm{e}^{\lambda|\xi|}|\widehat{f}(\xi)|\big)<+\infty \ \mbox{for any} \ \lambda>0  \right\}.
 \end{align*}
 The Fourier transform on $\ml{S}_{\mathrm{G-S}}'$, i.e. the dual space of $\ml{S}_{\mathrm{G-S}}$, can be defined by its duality (cf. \cite[Section 3]{Feichtinger-Grochenig-Li-Wang=2021}). To be specific, the Fourier transform of $f\in\ml{S}'_{\mathrm{G-S}}$, i.e. $\ml{F}(f)=\widehat{f}$, fulfills
 \begin{align*}
 	\langle \ml{F}(f),\phi\rangle=\langle f,\ml{F}(\phi)\rangle  \ \mbox{for any}\  \phi\in\ml{S}_{\mathrm{G-S}}.
 \end{align*}
 Remark that $\ml{S}_{\mathrm{G-S}}\subset \ml{S}$ and $\ml{S}'\subset\ml{S}'_{\mathrm{G-S}}$.
 Then, we are going to introduce the distribution (initial data) space $E_{s}^{\alpha}$ (cf. \cite{Chen-Wang-Wang=2023,Chen-Lu-Wang=2023,Wang=2024}) that is a Banach space.
 \begin{defn}\label{Defn-E-space}
 	For any $\alpha,s\in\mb{R}$, let us define
 	\begin{align*}
 		E_s^{\alpha}:=\left\{f\in\ml{S}_{\mathrm{G-S}}': \langle\xi\rangle^s2^{\alpha|\xi|}\widehat{f}(\xi)\in L^2  \right\}
 	\end{align*}
 	endowed with its norm
 	\begin{align*}
 		\|f\|_{E_s^{\alpha}}:=\|\langle\xi\rangle^s2^{\alpha|\xi|}\widehat{f}(\xi)\|_{L^2}.
 	\end{align*}
 \end{defn}
 \begin{remark}
 	We discuss three different spaces $E^{\alpha}_s$ according to the sign of $\alpha$, which greatly changes the regularity of functions.
 	\begin{itemize}
 		\item The case $\alpha=0$ is the Sobolev space, namely, $E_{s}^0=H^s$ with $s\in\mb{R}$.
 		\item The case $\alpha>0$ is strongly related to the Sobolev-Gevrey space consisting of analytic functions. It was applied in studying the Gevrey regularity for solutions of some evolution equations, e.g. \cite{Foias-Temam=1989,Bourdaud-Reissig-Sickel=2003}.   Particularly, the infinitely smooth function space $E_0^{\alpha}$ with $\alpha>0$ was introduced in \cite{Bjock=1966}.
 		\item When $\alpha<0$, it is a rather rough space of distributions due to the inclusion $H^{s'}\subset E^{\alpha}_s$ for any $s,s'\in\mb{R}$ according to $\|f\|_{E_s^{\alpha}}\lesssim \|f\|_{H^{s'}}$. We notice that $\cup_{s'\in\mb{R}}H^{s'}$ is a subset of $E_s^{\alpha}$ if $\alpha<0$.
 		We call it as the rough space because $E^{\alpha}_s$ with $\alpha<0$ contains rougher distributions than any Sobolev space $H^{s'}$.
 	\end{itemize}
 \end{remark}

 We recall some preliminaries for the frequency-uniform decomposition techniques which were introduced by \cite{Wang-Zhao-Guo=2006} in the nonlinear PDEs. For any $k\in\mb{Z}^n$, the frequency-uniform decomposition operator $\square_k$ is defined via
 \begin{align*}
 	\square_kf:=\ml{F}^{-1}\big(\chi_{k+[0,1)^n}\ml{F}(f)\big)
 \end{align*}
 with the characteristic function $\chi_{\mb{A}}=1$ on $\mb{A}\subset\mb{R}^n$. Namely, the frequencies of $\ml{F}(\square_kf)$ are localized in $k+[0,1)^n$. Employing the Plancherel identity and the orthogonality of $\square_k$ for $k\in\mb{Z}^n$, the equivalent expression of $\|f\|_{E^{\alpha}_s}$ arises
 \begin{align*}
 	\|f\|_{E_s^{\alpha}}\approx\left(\sum\limits_{k\in\mb{Z}^n}\left(\langle k\rangle^s2^{\alpha|k|}\|\square_kf\|_{L^2}\right)^2 \right)^{1/2}.
 \end{align*}
 For convenience, we denote the distribution (solution) space by
 \begin{align}\label{norm-rough-type}
 	\|g\|_{\widetilde{L}^{\gamma}(\mb{R}_+,E^{\alpha,s}_{2,2})}:=\left(\sum\limits_{k\in\mb{Z}^n}\left(\langle k\rangle^s2^{\alpha|k|}\|\square_kg\|_{L^{\gamma}_tL^2_x}\right)^2\right)^{1/2}.
 \end{align}

\subsection{Real-valued JMGT equations}\label{Sub-Section-Real-Value-JMGT}
\hspace{5mm}We recall now some physical as well as historical background related to the JMGT equations \eqref{Eq-Complex-JGMT-W} in the real-valued setting and, simultaneously, in the local situation $\ml{A}=-\Delta$ (however, there are a few results on the other cases, for example, the linearized model with $\ml{A}=\Delta^2$ describes the vertical displacement in viscoelastic plates \cite{Conti-Pata-Pellicer-Quintanilla=2020}).

It is well-known that to describe the propagation of sound in viscous thermally relaxing fluids, the modern research of nonlinear acoustics with second sound phenomena considers an approximated mathematical model of fully compressible Navier-Stokes-Cattaneo (NSC) system in irrotational flows. To be specific, by applying the Lighthill scheme of approximations \cite{Lighthill=1956} for the fully compressible NSC system to retain the terms of first- and second-orders with respect to small perturbations around the constant equilibrium state, the well-studied JMGT equations arise. The JMGT equations are named after the early works of F.K. Moore, W.E. Gibson \cite{MooreGibson1960} in 1960, of P.A. Thompson \cite{Thompson1972} in 1972, and deduced by the classical work of P.M. Jordan \cite{Jordan-2014} in 2014 who utilized the Cattaneo law to eliminate an infinite signal speed paradox from the Fourier law of heat conduction in thermoviscous fluids. The JMGT equations are written via
\begin{align}\label{Eq-Real-JGMT-W}
\tau\partial_t^3\Psi+\partial_t^2\Psi-\Delta\Psi-(\delta+\tau)\Delta\partial_t\Psi=\begin{cases}
	\partial_t[\frac{B}{2A}(\partial_t\Psi)^2+(\nabla\Psi)^2],&\mbox{the Kuznetsov-type},\\
	(1+\frac{B}{2A})\partial_t[(\partial_t\Psi)^2],&\mbox{the Westervelt-type},
\end{cases}
\end{align}
where the thermal relaxation $\tau\in\mb{R}_+$ comes from the Cattaneo law of heat conduction, the diffusivity of sound $\delta\in\mb{R}_+$ consists of bulk/shear viscosities from the compressible Navier-Stokes system, and the real-valued scalar function $\Psi=\Psi(t,x)\in\mb{R}$ is referred to the acoustic velocity potential due to the irrotational condition.  Basing on the shape of acoustic field being ``close'' to that of plane wave \cite{Coulouvrat-1992}, one may encounter the approximation $\nabla\Psi\approx \partial_t\Psi$ from the so-called substitution corollary, as allowed under the weakly nonlinear scheme. So, the nonlinearity of Kuznetsov-type \eqref{Eq-Real-JGMT-W}$_1$ can be reduced to the one of Westervelt-type \eqref{Eq-Real-JGMT-W}$_2$, namely, the local nonlinear effect is neglected. These mathematical models have been extensively used in medical and industrial applications of high-intensity
ultra sound, for example, the medical imaging and therapy, ultrasound cleaning and welding (cf. \cite{Abramov-1999,Dreyer-Krauss-Bauer-Ried-2000,Kaltenbacher-Landes-Hoffelner-Simkovics-2002}).

We are going to address a brief review on the JMGT equations \eqref{Eq-Real-JGMT-W} in the whole space $\mb{R}^n$ (for the bounded domains case, we refer the interested reader to \cite{Kaltenbacher-Lasiecka-Marchand-2011,Marchand-McDevitt-Triggiani-2012,Kaltenbacher-Lasiecka-Pos-2012,Conejero-Lizama-Rodenas-2015,Dell-Pata=2017,Kaltenbacher-Nikolic-2019,B-L-2020,Kaltenbacher-Niko-2021,Niko-Winker=2024} and references given therein).  As preparations, some qualitative properties of solutions for the corresponding linearized Cauchy problem, i.e. the Moore-Gibson-Thompson (MGT) equation with the vanishing right-hand side, are well-established in recent years, including sharp energy decay rates \cite{Pellicer-Said-Houari=2019}, optimal growth/decay $L^2$ estimates \cite{Chen-Ikehata=2021,Chen-Takeda=2023}, $L^p-L^q$ estimates \cite{Chen-Gong=2024,Chen-Ma-Qin=2025}, singular limits \cite{Chen-Ikehata=2021,Chen-Gong=2024}. The global in-time well-posedness results of regular small data Sobolev solutions and Besov solutions, respectively, were proved by \cite{Racke-Said-2020} and \cite{Said-Houari=Besov=2022} in $\mb{R}^3$ via energy methods. Then, the author of \cite{Said-Houari=Large-Norm=2022} removed the smallness assumption for higher-order Sobolev data in $\mb{R}^3$ and deduced energy decays with the additional $L^1$ integrability of initial data, while it is still necessary to assume the small size of lower-order Sobolev data. Carrying Sobolev regular small data with the additional $L^1$ integrability, the authors of \cite{Chen-Takeda=2023} derived large time optimal growth/decay estimates and optimal leading terms of global in-time small data Sobolev solutions in $\mb{R}^n$ with any $n\geqslant 1$ via the refined WKB analysis and the Fourier analysis. 

In conclusion, the Sobolev space (or other related regular space) as well as the smallness assumption on initial data seems necessary to demonstrate global in-time existence of solutions for the JMGT equations \eqref{Eq-Real-JGMT-W} at least in $\mb{R}^n$. 

\subsection{Motivations of this manuscript}\label{Sub-Section-Motivations}
\hspace{5mm}Complex-valued nonlinear evolution equations have caught a lot of attention in recent years, including the Navier-Stokes system \cite{Li-Sinai=2008,Sverak=2017}, the Euler system \cite{Albritton-Ogden=2024}, the heat equation \cite{Chen-Wang-Wang=2023} (related to the viscous Constantin-Lax-Majda equation \cite{Constantin-Lax-Majda=1985} with a Fourier support restriction on the initial data), the Schr\"odinger equation \cite{Chen-Lu-Wang=2023}, the Klein-Gordon equation \cite{Wang=2024}, the damped evolution equations \cite{Chen-Reissig=2025}, and the general nonlinear models \cite{Nakanishi-Wang=2025}. It is not surprising that the results and methodologies for complex-valued models are different from those for their corresponding real-valued models (sometimes they can include the real-valued case if their imaginary parts vanish).

Inspired by these papers and the real-valued JMGT equation of Westervelt-type \eqref{Eq-Real-JGMT-W}$_2$ in nonlinear acoustics, in this paper, we are going to investigate global in-time existence of solutions for the complex-valued nonlocal JMGT equations \eqref{Eq-Complex-JGMT-W} in the whole space $\mb{R}^n$, whose initial data mainly belong to one of the following frameworks:
\begin{table}[http]
	\centering	
	\begin{tabular}{cccc}
		\toprule
		Initial Data   & \multirow{2}{*}{Regularity} & \multirow{2}{*}{Size}  & Fourier Support \\
		$\psi_j$ with $j\in\{0,1,2\}$& & &  Restriction\\
		\midrule
\multirow{2}{*}{Framework I}  & \multirow{2}{*}{$E^{\alpha}_s$ with $\alpha<0$} & \multirow{2}{*}{Arbitrarily Large} & \multirow{2}{*}{Subset of First Octant}\\
&&&\\
\midrule
		\multirow{2}{*}{Framework II}   & $H^{s}\cap L^{m_1}$ for $\mathrm{Re}\,\psi_j$ & \multirow{2}{*}{Sufficiently Small} & \multirow{2}{*}{--------}\\
		&  $H^{s}\cap L^{m_2}$ for $\mathrm{Im}\,\psi_j$ & &  \\
		\bottomrule
	\multicolumn{4}{l}{\emph{$*$ The parameters $m_1\neq m_2$ in general denote different integrabilities of initial data.}}\\
	\multicolumn{4}{l}{\emph{$**$ The regularity $s$ heavily depend on the dimension $n$ and the index $\sigma$ of $\ml{A}=(-\Delta)^{\sigma}$.}}
	\end{tabular}
	\caption{Initial data frameworks for the complex-valued JMGT equations \eqref{Eq-Complex-JGMT-W}}
	\label{Table_3}
\end{table}

\noindent We would like to underline that our studies for the complex-valued JMGT equations \eqref{Eq-Complex-JGMT-W} are not simply generalizations of those for the real-valued JMGT equations \eqref{Eq-Real-JGMT-W}, whose reasons (associated with new observations and results) will be stated via the following two motivations.
\medskip

\noindent\textbf{Motivation I: Rough large data solution.} First of all, in the framework I for initial data, let us state an observation in the homogeneous Sobolev space $\dot{H}^s$ by the scaled unknown $\psi_{\lambda}=\psi_{\lambda}(t,x)$ as follows:
\begin{align*}
\psi_{\lambda}(t,x):=\psi(t,\lambda x)\ \mbox{with}\  \mbox{any}\ \lambda>0.
\end{align*}
Applying the scaling property of fractional Laplacians $\langle\ml{A}\psi,\phi\rangle=\langle\lambda^{-2\sigma}\ml{A}\psi_{\lambda},\phi\rangle$ for $\phi\in\ml{S}_{\mathrm{G-S}}$, we may deduce the $\lambda$-dependent JMGT equations
\begin{align}\label{Eq-Complex-Scaled-JGMT-W}
	\begin{cases}
		\tau\partial_t^3\psi_{\lambda}+\partial_t^2\psi_{\lambda}+\lambda^{-2\sigma}\mathcal{A}\psi_{\lambda}+(\delta+\tau)\lambda^{-2\sigma}\mathcal{A}\partial_t\psi_{\lambda}=(1+\frac{B}{2A})\partial_t[(\partial_t\psi_{\lambda})^2],&x\in\mb{R}^n,\ t\in\mb{R}_+,\\
		\psi_{\lambda}(0,x)=\psi_{0,\lambda}(x),\ \partial_t\psi_{\lambda}(0,x)=\psi_{1,\lambda}(x),\ \partial_t^2\psi_{\lambda}(0,x)=\psi_{2,\lambda}(x),&x\in\mb{R}^n,
	\end{cases}
\end{align}
with the initial data $\psi_{j,\lambda}(x)=\psi_j(\lambda x)$ for $j\in\{0,1,2\}$. The derivative of solution in the sense of energy terms fulfills
\begin{align*}
\|\ml{A}^{\frac{1}{2}}\partial_t\psi_{\lambda}(t,\cdot)\|_{\dot{H}^s}=\lambda^{s+\sigma-\frac{n}{2}}\|\ml{A}^{\frac{1}{2}}\partial_t\psi(t,\cdot)\|_{\dot{H}^s},
\end{align*}
which is invariant in $\dot{H}^{s}$ with $s=s_{\mathrm{crit}}:=\frac{n}{2}-\sigma$. Taking $t=0$, the last equality implies
\begin{align*}
\|\psi_{1,\lambda}\|_{\dot{H}^{s+\sigma}}=\lambda^{s-s_{\mathrm{crit}}}\|\psi_1\|_{\dot{H}^{s+\sigma}}\to0 \ \mbox{as}\ \lambda\to+\infty,
\end{align*}
for $\psi_1\in \dot{H}^{s+\sigma}$ with $s<s_{\mathrm{crit}}$. This phenomenon suggests that the scaled solution $\psi_{\lambda}$ can have very small initial data in a suitable $\dot{H}^s$ framework by choosing $\lambda\gg1$, even if the un-scaled initial data are very large. Analogously, it allows to consider arbitrary large initial data in the JMGT equations \eqref{Eq-Complex-JGMT-W} by using the scaling argument, provided that the previous observation is adapted to the rough data space $E^{\alpha}_s$ with $\alpha<0$. Notice that the well-known identity $\|\,[\partial_t\psi(t,\cdot)]^2\|_{L^m}=\|\partial_t\psi(t,\cdot)\|_{L^{2m}}^2$ does not valid anymore in rough spaces. Therefore, to deal with the nonlinearity in $E^{\alpha}_s$-type spaces, we will apply the frequency-uniform decomposition techniques (cf. \cite{Wang-Zhao-Guo=2006,Chen-Wang-Wang=2023}) and the good algebraic structure of $E^{\alpha}_s$-type spaces when the frequency is localized in the first octant (it is allowed by considering the complex-valued initial data). In turn, the Fourier support restriction for initial data occurs.
\medskip

\noindent\textbf{Motivation II: Different integrabilities on initial data.} Let us turn to the framework II for initial data, whose Fourier support restriction is dropped. We now state an equivalent form of complex-valued JMGT equations \eqref{Eq-Complex-JGMT-W} by setting
\begin{align*}
	\psi=\mathrm{Re}\,\psi+i\mathrm{Im}\,\psi:=u+iv
\end{align*}  with its real part $u=u(t,x)\in\mb{R}$ and its imaginary part $v=v(t,x)\in\mb{R}$. Thanks to the fact that
\begin{align*}
	(\partial_t\psi)^2=(\partial_tu)^2-(\partial_tv)^2+2i\partial_tu\partial_tv,
\end{align*} the complex-valued JMGT equations \eqref{Eq-Complex-JGMT-W} can be reduced to the following real-valued strongly coupled systems: 
\begin{align}\label{Eq-System-JMGT}
	\begin{cases}
		\tau\partial_t^3u+\partial_t^2u+\mathcal{A}u+(\delta+\tau)\mathcal{A}\partial_tu=(1+\frac{B}{2A})\partial_t[(\partial_tu)^2-(\partial_tv)^2],&x\in\mb{R}^n,\ t\in\mb{R}_+,\\
		\tau\partial_t^3v+\partial_t^2v+\mathcal{A}v+(\delta+\tau)\mathcal{A}\partial_tv=(2+\frac{B}{A})\partial_t(\partial_tu\partial_tv),&x\in\mb{R}^n,\ t\in\mb{R}_+,\\
		u(0,x)=u_0(x),\ \partial_tu(0,x)=u_1(x),\ \partial_t^2u(0,x)=u_2(x),&x\in\mb{R}^n,\\
		v(0,x)=v_0(x),\ \partial_tv(0,x)=v_1(x),\ \partial_t^2v(0,x)=v_2(x),&x\in\mb{R}^n.
	\end{cases}
\end{align}
Different from the single equations \eqref{Eq-Real-JGMT-W}$_2$, the coupling structure in the nonlinearities appears in the strongly coupled JMGT systems \eqref{Eq-System-JMGT}, namely,
\begin{align*}
\tau\partial_t^3\ml{U}+\partial_t^2\ml{U}+\mathcal{A}\ml{U}+(\delta+\tau)\mathcal{A}\partial_t\ml{U}=(2+\tfrac{B}{A})\begin{pmatrix}
\partial_tu& -\partial_tv\\
\partial_tv& \partial_tu
\end{pmatrix}\partial_t^2\ml{U}
\end{align*}
for the vector unknown $\ml{U}:=(u,v)^{\mathrm{T}}\in\mb{R}^2$. This nonlinear coupling structure will bring some additional difficulties.
 At this moment, it seems possible to assume different integrabilities on the initial data $u_j$ and $v_j$ for $j\in\{0,1,2\}$ in the global in-time existence part. This proof is based on the WKB analysis associated with the Fourier analysis in the linearized equations, and some fractional interpolations (e.g. the fractional Gagliardo-Nirenberg inequality, the fractional Leibniz rule as well as the fractional Sobolev embedding) to estimate our nonlinear terms.

\subsection{Notations}\label{Sub-Section-Notations}
\hspace{5mm}The generic positive constants $c$ and $C$ may vary from line to line. We write $f\lesssim g$ if there exists a positive constant $C$ such that $f\leqslant Cg$, analogously for $f\gtrsim g$. The sharp relation $f\approx g$ holds, if and only if, $g\lesssim f\lesssim g$. We define $[a]_+:=\max\{a,0\}$ being the nonnegative part of $a$ and $\frac{1}{[a]_+}=+\infty$ if $a\leqslant0$. As usual, $m'=\frac{m}{m-1}$ stands for the H\"older conjugate of $m\in[1,+\infty]$. The convolution with respect to the variable $y$ is denoted by $\ast_{(y)}$.

The first octants are written by
\begin{align*}
	\mb{R}^n_{\mathrm{Oct}}:=\{\xi\in\mb{R}^n: \xi_j\geqslant 0\ \mbox{for all}\  j=1,\dots,n\}\  \mbox{and}\  \mb{Z}^n_{\mathrm{Oct}}:=\mb{R}^n_{\mathrm{Oct}}\cap \mb{Z}^n,
\end{align*}
where $\xi_j$ denotes the $j$-th component of vector $\xi\in\mb{R}^n$. Away from the origin, we define
\begin{align*}
	\mb{R}^n_{\mathrm{Oct},R}:=\{ \xi\in\mb{R}^n_{\mathrm{Oct}}: |\xi|_{\infty}\geqslant R>0\},
\end{align*}
in which $\ell^m$ stands for the sequence Lebesgue space with its norm $|\cdot|_{m}$ for $m\in[1,+\infty]$.

\section{Main results}\setcounter{equation}{0}\label{Section-Main-Results}
\hspace{5mm}Before stating our first result, let us introduce a fixed constant $N_0=N_0(\tau,\delta,\sigma)>0$ that is determined in Proposition \ref{Prop-Pointwise} to guarantee the magnitude of large frequencies for the nonlocal MGT equations. The next result includes the rough large data solution if $\alpha<0$ and the Sobolev small data solution if $\alpha=0$.
\begin{theorem}\label{Thm-rough-large-data}
Let $\alpha\leqslant 0$ and $s\geqslant s_{\mathrm{crit}}=\frac{n}{2}-\sigma$ with $\sigma\in\mb{R}_+$. Let us assume
\begin{align*}
(\psi_0,\psi_1,\psi_2)\in E^{\alpha}_{s+\sigma}\times E^{\alpha}_{s+\sigma}\times E^{\alpha}_s\ \mbox{and}\ \psi_1^2\in E_s^{\alpha}
\end{align*}
such that $\mathrm{supp}\,\widehat{\psi}_j\subset\mb{R}^n_{\mathrm{Oct},N_0}$ for $j\in\{0,1,2\}$. Then, there exists $\alpha_0\leqslant \alpha$ such that there is a uniquely determined solution (in the time-derivative sense)
\begin{align*}
\partial_t\psi\in\ml{C}(\mb{R}_+,E^{\alpha_0}_{s+\sigma})\cap \widetilde{L}^1(\mb{R}_+,E^{\alpha_0,s+\sigma}_{2,2})\cap \widetilde{L}^{\infty}(\mb{R}_+,E^{\alpha_0,s+\sigma}_{2,2})
\end{align*}
for the complex-valued nonlocal JMGT equations of Westervelt-type \eqref{Eq-Complex-JGMT-W} equipping $\ml{A}=(-\Delta)^{\sigma}$, if one of the following conditions for the quantity
\begin{align*}
\|(\psi_{0},\psi_{1},\psi_{2})\|_{E^{\alpha}_{s+\sigma}\times E^{\alpha}_{s+\sigma}\times E^{\alpha}_s}+\|\psi_{1}^2\|_{E^{\alpha}_s}=:\epsilon_{\mathrm{Rough}}>0
\end{align*}
 hold:
\begin{itemize}
	\item $\alpha<0$: with any (even large) size of initial data $\epsilon_{\mathrm{Rough}}$;
	\item $\alpha=0$: with the sufficiently small size of initial data $\epsilon_{\mathrm{Rough}}\ll 1$.
\end{itemize}
\end{theorem}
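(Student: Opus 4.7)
My plan is to recast the Cauchy problem via Duhamel's formula, identify $\partial_t\psi$ as a fixed point of a nonlinear map $\Phi$, and apply Banach's contraction principle in a closed ball of
\begin{align*}
	X:=\widetilde{L}^{\infty}(\mb{R}_+,E^{\alpha_0,s+\sigma}_{2,2})\cap \widetilde{L}^{1}(\mb{R}_+,E^{\alpha_0,s+\sigma}_{2,2})
\end{align*}
for a parameter $\alpha_0\leqslant\alpha$ to be specified. The structural observation that makes this feasible is that the Fourier support restriction $\mathrm{supp}\,\widehat{\psi}_j\subset\mb{R}^n_{\mathrm{Oct},N_0}$ is propagated by the iteration: the linear MGT evolution is a Fourier multiplier (hence preserves support), while the Fourier transform of the product $(\partial_t\psi)^2$ is a convolution that maps two first-octant distributions supported in $\mb{R}^n_{\mathrm{Oct},N_0}$ into one supported in $\mb{R}^n_{\mathrm{Oct},2N_0}\subset\mb{R}^n_{\mathrm{Oct},N_0}$. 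Thus the high-frequency regime of Proposition \ref{Prop-Pointwise} is available at every step.

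\textbf{Linear estimate.} I would exploit Proposition \ref{Prop-Pointwise} to obtain high-frequency pointwise Fourier decay of the form $\mathrm{e}^{-c|\xi|^{2\sigma}t}$ (up to polynomial prefactors) for $|\xi|\geqslant N_0$. After applying $\square_k$, multiplying by $\langle k\rangle^{s+\sigma}2^{\alpha_0|k|}$, integrating the kernel in time (which costs $|\xi|^{-2\sigma}$ in $L^1_t$ and is trivial in $L^\infty_t$) and summing in $\ell^2_k$ via Plancherel, a linear bound
\begin{align*}
	\|\partial_t\psi_{\mathrm{lin}}\|_X\lesssim \|(\psi_0,\psi_1,\psi_2)\|_{E^{\alpha_0}_{s+\sigma}\times E^{\alpha_0}_{s+\sigma}\times E^{\alpha_0}_s}
\end{align*}
emerges, the $|\xi|^{-2\sigma}$-factor being absorbed by the extra $\sigma$-regularity of the initial data.

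\textbf{Nonlinear estimate (main obstacle).} The harder part is controlling the Duhamel term with source $\partial_t[(\partial_t\psi)^2]$. I would move the time derivative onto the Green's function via integration by parts, reducing matters to estimating $\|(\partial_t\psi)^2\|_{\widetilde L^1(\mb{R}_+,E^{\alpha_0,s}_{2,2})}$. Using $\square_k(fg)=\sum_{k_1+k_2\in k+[-1,1)^n}\square_{k_1}f\cdot\square_{k_2}g$ with $k_1,k_2\in\mb{Z}^n_{\mathrm{Oct}}$, the first-octant identity $|k_1+k_2|_1=|k_1|_1+|k_2|_1$ combined with $|\cdot|\approx|\cdot|_1$ furnishes the crucial algebraic inequality
\begin{align*}
	2^{\alpha_0|k_1+k_2|}\leqslant 2^{\alpha_0|k_1|/\sqrt{n}}\cdot 2^{\alpha_0|k_2|/\sqrt{n}},
\end{align*}
which is the algebraic core that makes the first octant useful. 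Coupled with Bernstein's inequality to bound $\|\square_{k_1}f\cdot\square_{k_2}g\|_{L^2_x}$ by $\|\square_{k_1}f\|_{L^2_x}\|\square_{k_2}g\|_{L^2_x}$, a discrete Young convolution in $k$ (absorbing the $1/\sqrt{n}$-loss at an $\epsilon$-cost by exploiting the exponential decay $2^{\alpha_0|k|}$ with $\alpha_0<0$), and H\"older in time, one obtains $\|(\partial_t\psi)^2\|_{\widetilde L^1(\mb{R}_+,E^{\alpha_0,s}_{2,2})}\lesssim\|\partial_t\psi\|_X^2$. The technical heart lies in simultaneously tracking the exponential-weight losses (forcing $|\alpha_0|$ strictly larger than $|\alpha|$ in general) and the $L^2$-based norms through the convolution structure.

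\textbf{Closing the fixed point.} Combining, $\Phi$ satisfies $\|\Phi(\partial_t\psi)\|_X\leqslant C_{\mathrm{lin}}+C'\|\partial_t\psi\|_X^2$ with a matching Lipschitz estimate on differences. In the Sobolev regime $\alpha=0=\alpha_0$, no weighted smoothing is at hand, so one must impose $\epsilon_{\mathrm{Rough}}\ll 1$ to map a ball of radius $2C\epsilon_{\mathrm{Rough}}$ into itself as a contraction. In the rough regime $\alpha<0$, the support condition $|k|\geqslant N_0$ gives
\begin{align*}
	\|(\psi_0,\psi_1,\psi_2)\|_{E^{\alpha_0}_{s+\sigma}\times E^{\alpha_0}_{s+\sigma}\times E^{\alpha_0}_s}\leqslant 2^{(\alpha_0-\alpha)N_0}\epsilon_{\mathrm{Rough}}\xrightarrow{\alpha_0\to -\infty}0,
\end{align*}
so a sufficiently negative $\alpha_0\leqslant\alpha$ drives $C_{\mathrm{lin}}$ below any prescribed threshold, closing the iteration regardless of the size of $\epsilon_{\mathrm{Rough}}$. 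This is the precise mechanism by which the rough data setting accommodates large initial data, and it relies crucially on the nonlinear constant $C'$ growing no faster than $2^{-(\alpha_0-\alpha)N_0}$ as $\alpha_0\to -\infty$, a property inherited from the absolute convergence of exponentially-weighted $\ell^2$-sums.
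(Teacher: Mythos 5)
Your overall architecture (fixed point for the Duhamel formulation, propagation of the first-octant support by the linear multiplier and by the convolution structure of the product, linear plus bilinear estimates) parallels the paper, but your mechanism for large data is genuinely different: the paper scales space, $\psi_{\lambda}(t,x)=\psi(t,\lambda x)$, runs the contraction at the \emph{fixed} radius $\alpha$ with all constants uniform in $\lambda\geqslant1$ (Propositions \ref{Prop-Linear-Scaled}, \ref{Prop-Inhomogeneous-Scaled}, \ref{Prop=Algebra-Property}), gains smallness of the scaled data from Proposition \ref{Prop=Scaling-Property-1} (the factor $2^{\alpha(\lambda-1)N_0}$ beating the polynomial loss), and only loses radius at the very end when un-scaling via Proposition \ref{Prop=Scaling-Property-2} ($\alpha_0=\lambda\alpha$). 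You instead keep the equation untouched and deepen the radius directly to $\alpha_0\ll\alpha$, using $2^{\alpha_0|\xi|}\leqslant2^{(\alpha_0-\alpha)N_0}2^{\alpha|\xi|}$ on $|\xi|\geqslant N_0$; that data estimate is correct, and the idea is attractive because it avoids the $\lambda$-dependent kernels altogether.

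The genuine gap is in your bilinear step, which is exactly where the paper leans on Proposition \ref{Prop=Algebra-Property} at one fixed radius. First, your factorization $2^{\alpha_0|k_1+k_2|}\leqslant2^{\alpha_0|k_1|/\sqrt n}\,2^{\alpha_0|k_2|/\sqrt n}$ produces weights strictly weaker than the weights $2^{\alpha_0|k_j|}$ defining the norms of the factors (recall $\alpha_0<0$); the discrepancy $2^{|\alpha_0|(1-1/\sqrt n)|k_j|}$ grows exponentially in $|k_j|$ and cannot be ``absorbed at an $\epsilon$-cost'' by the $\ell^2$ summation. With this route the product is controlled only by the factors measured at the stronger radius $\alpha_0/\sqrt n$, which destroys the self-map property of your operator; the loss is unavoidable in $n\geqslant2$ because the Euclidean norm is not additive on the first octant (take $k_1=(M,0,\dots)$, $k_2=(0,M,0,\dots)$, where $|k_1|+|k_2|-|k_1+k_2|=(2-\sqrt2)M$). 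Second, and independently of how the bilinear estimate is proved, your large-data mechanism requires the bilinear constant $C'(\alpha_0)$ to grow strictly slower than $2^{|\alpha_0-\alpha|N_0}$ as $\alpha_0\to-\infty$; you state this requirement but do not establish it, and it is not innocuous: already the bookkeeping $|k-(k_1+k_2)|\lesssim\sqrt n$ inside $\square_k(fg)$ introduces factors of size $2^{|\alpha_0|\sqrt n}$, while $N_0=N_0(\tau,\delta,\sigma)$ from Proposition \ref{Prop-Pointwise} is fixed with no guarantee of dominating such dimension-dependent losses. So either you prove a radius-quantified version of Proposition \ref{Prop=Algebra-Property} with explicit dependence on $\alpha_0$, or you adopt the paper's scaling device, which sidesteps the uniformity question entirely. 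Two smaller inaccuracies: for $|\xi|\geqslant N_0$ the kernels decay like $\mathrm{e}^{-ct}$ (not $\mathrm{e}^{-c|\xi|^{2\sigma}t}$), and the weaker regularity of $\psi_2$ is compensated by the factor $|\xi|^{-\sigma}$ in $\partial_t\widehat{\ml{K}}_2$ rather than by a $|\xi|^{-2\sigma}$ gain from time integration; also $\mathrm{supp}\,\widehat{\psi_1^2}$ need not lie in $\mb{R}^n_{\mathrm{Oct},2N_0}$, only in $\mb{R}^n_{\mathrm{Oct},N_0}$, though the latter is all you need.
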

\begin{remark}
By an interpolation, we claim $\partial_t\psi\in\widetilde{L}^p(\mb{R}_+,E_{2,2}^{\alpha_0,s+\sigma})$ for any $p\in[1,+\infty]$. Concerning other regularities of solution $\psi$, one may use the Leibniz integral rule
\begin{align*}
	\psi(t,x)=\int_0^t\partial_t\psi(\theta,x)\mathrm{d}\theta+\psi_0(x)
\end{align*}
and some regularities statements for $\partial_t\psi$ in Theorem \ref{Thm-rough-large-data}.
\end{remark}
\begin{remark}
	The parameter $\alpha$ in $E^{\alpha}_s$ or $E_{2,2}^{\alpha,s}$ always stands for the radius (for instance, the well-known analytic radius if $\alpha>0$). In Theorem \ref{Thm-rough-large-data} if $\alpha<0$, there is a loss on the radius of rough solution $\partial_t\psi$. Specifically, $\alpha_0:=\lambda\alpha<0$ results from the scaling argument for the large data, where $\lambda>1$ relies on $\alpha,s,\sigma,n,N_0$ and the size of initial data. An example of $\lambda$ is explained in \eqref{Example-lambda}. This radius loss phenomenon will expire (i.e. $\alpha_0=\alpha$ or $\lambda=1$) if the sufficiently small size of initial data is assumed additionally. 
\end{remark}

Turning to the small data existence result without Fourier support restrictions, let us introduce the initial data spaces
\begin{align}\label{Data-Spaces}
	\mb{D}_{s,m_j}:=(H^{s+\sigma}\cap L^{m_j})\times (H^{s+\sigma}\cap L^{m_j})\times (H^{s}\cap L^{m_j}),
\end{align}
where $s\in\mb{R}_+$ and $m_j\in[1,2)$ for $j\in\{1,2\}$. Note that the existence of solution for the equivalent strongly coupled systems \eqref{Eq-System-JMGT} guarantees the existence of solution for the complex-valued JMGT equations \eqref{Eq-Complex-JGMT-W}.
\begin{theorem}\label{Thm-regular-small-data}
Let $m_1,m_2\in[1,2)$ and $s>[\frac{n}{2}-\sigma]_+$ with $\sigma\in\mb{R}_+$ such that
\begin{align*}
\tfrac{2}{\max\{m_1,m_2\}}\geqslant\tfrac{1}{\min\{m_1,m_2\}}+\tfrac{\sigma}{n}.
\end{align*}
Let us assume
\begin{align*}
(u_0,u_1,u_2)\in\mb{D}_{s,m_1},\ (v_0,v_1,v_2)\in\mb{D}_{s,m_2}\ \mbox{and}\ (u_1,v_1)\in H^{s+\sigma}\times H^{s+\sigma}.
\end{align*}
Then, there is a uniquely determined Sobolev solution (in the time-derivative sense)
\begin{align*}
(\partial_tu,\partial_tv)^{\mathrm{T}}\in \left(\ml{C}(\mb{R}_+,H^{s+\sigma})\right)^2
\end{align*}
for the nonlocal strongly coupled JMGT systems \eqref{Eq-System-JMGT} equipping $\ml{A}=(-\Delta)^{\sigma}$, if the initial data are sufficiently small
\begin{align*}
\|(u_0,u_1,u_2)\|_{\mb{D}_{s,m_1}}+\|(v_0,v_1,v_2)\|_{\mb{D}_{s,m_2}}+\|(u_1,v_1)\|^2_{H^{s+\sigma}\times H^{s+\sigma}}=:\epsilon_{\mathrm{Regular}}\ll 1.
\end{align*} Furthermore, the solution satisfies the following sharp decay estimates:
\begin{align*}
\|\partial_tw(t,\cdot)\|_{L^2}&\lesssim (1+t)^{-\frac{n(2-m_j)}{4m_j\sigma}}\epsilon_{\mathrm{Regular}},\\
\|\partial_tw(t,\cdot)\|_{\dot{H}^{s+\sigma}}&\lesssim (1+t)^{-\frac{n(2-m_j)}{4m_j\sigma}-\frac{s+\sigma}{2\sigma}}\epsilon_{\mathrm{Regular}},
\end{align*}
for $(w,j)\in\{(u,1),(v,2)\}$.
\end{theorem}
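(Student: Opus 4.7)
The plan is to establish Theorem \ref{Thm-regular-small-data} by a Banach contraction argument on the Duhamel integral formulation of the strongly coupled system \eqref{Eq-System-JMGT}. First, I would derive sharp $L^{m}\cap L^2\to L^2$ and $L^{m}\cap \dot{H}^{s+\sigma}\to\dot{H}^{s+\sigma}$ estimates for the three linear fundamental solutions $\mathcal{K}_0(t,\cdot),\mathcal{K}_1(t,\cdot),\mathcal{K}_2(t,\cdot)$ of the nonlocal MGT operator $\tau\partial_t^3+\partial_t^2+\mathcal{A}+(\delta+\tau)\mathcal{A}\partial_t$, via the WKB/Fourier machinery already developed in \cite{Chen-Takeda=2023,Chen-Ma-Qin=2025,Pellicer-Said-Houari=2019,Chen-Gong=2024}: the roots of the characteristic polynomial $\tau\lambda^3+\lambda^2+(\delta+\tau)|\xi|^{2\sigma}\lambda+|\xi|^{2\sigma}=0$ exhibit diffusive behavior for $|\xi|\ll 1$ and an exponentially decaying character for $|\xi|\gg 1$, producing the heat-like rate $(1+t)^{-n(2-m)/(4m\sigma)}$ together with the extra factor $(1+t)^{-(s+\sigma)/(2\sigma)}$ in the $\dot{H}^{s+\sigma}$ bound. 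Since the right-hand sides of \eqref{Eq-System-JMGT} carry an outer $\partial_t$, I would integrate by parts in $s$ inside Duhamel's integral: using $\mathcal{K}_2(0,\cdot)=0$, only the $s=0$ boundary survives, contributing $-\mathcal{K}_2(t,\cdot)\ast_{(x)}(u_1^2-v_1^2)$ in the $u$-equation (resp.\ $-2\mathcal{K}_2(t,\cdot)\ast_{(x)}(u_1 v_1)$ in the $v$-equation), which accounts precisely for the extra quadratic quantity $\|(u_1,v_1)\|^2_{H^{s+\sigma}\times H^{s+\sigma}}$ in $\epsilon_{\mathrm{Regular}}$, while the remaining time convolution involves only $\partial_s\mathcal{K}_2(t-s,\cdot)$ applied to the quadratic combinations themselves.

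Second, I would set up the evolution space for $(\partial_t u,\partial_t v)$ equipped with the time-weighted norm
\begin{align*}
\|w\|_{X_j}:=\sup_{t\geq 0}\Bigl((1+t)^{\frac{n(2-m_j)}{4m_j\sigma}}\|w(t,\cdot)\|_{L^2}+(1+t)^{\frac{n(2-m_j)}{4m_j\sigma}+\frac{s+\sigma}{2\sigma}}\|w(t,\cdot)\|_{\dot{H}^{s+\sigma}}\Bigr),\quad j\in\{1,2\},
\end{align*}
and work on a small closed ball of $X_1\times X_2$. The hypothesis $s>[\tfrac{n}{2}-\sigma]_+$ yields $s+\sigma>\tfrac{n}{2}$ and hence the Sobolev embedding $H^{s+\sigma}\hookrightarrow L^\infty$; combined with the fractional Gagliardo-Nirenberg inequality, the fractional Sobolev embedding, and the fractional Leibniz rule, this reduces the control of $(\partial_t u)^2$, $(\partial_t v)^2$, and $\partial_t u\,\partial_t v$ in the required $L^{m_j}\cap L^2$ and $\dot{H}^s$ norms to bilinear combinations of the $X_j$-norms of $\partial_t u$ and $\partial_t v$.

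The main obstacle is closing the cross-term estimate in the $v$-equation: the product $\partial_t u\,\partial_t v$ must be bounded in $L^{m_2}$, which by H\"older's inequality splits as $\|\partial_t u\|_{L^a}\|\partial_t v\|_{L^b}$ with $1/a+1/b=1/m_2$; each factor must then be interpolated between $L^2$ and $\dot{H}^{s+\sigma}$ via fractional Gagliardo-Nirenberg, and the resulting time-decay exponents involve simultaneously $m_1$ (through $\partial_t u\in X_1$) and $m_2$ (through $\partial_t v\in X_2$). The algebraic assumption
\begin{align*}
\tfrac{2}{\max\{m_1,m_2\}}\geq\tfrac{1}{\min\{m_1,m_2\}}+\tfrac{\sigma}{n}
\end{align*}
is precisely the threshold that makes the time convolution $\int_0^t(1+t-\theta)^{-\kappa_j}(1+\theta)^{-\mu}\,d\theta$ close at the target rate $(1+t)^{-n(2-m_j)/(4m_j\sigma)}$ rather than producing a logarithmic or divergent factor, and it is also what reconciles the asymmetry between $m_1\neq m_2$ in the two equations. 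Feeding the linear bounds and these bilinear estimates into Duhamel's formula yields a self-map on, and a contraction in, the small ball of $X_1\times X_2$; Banach's fixed-point theorem then produces the unique Sobolev solution $(\partial_t u,\partial_t v)\in(\mathcal{C}(\mathbb{R}_+,H^{s+\sigma}))^2$, and the advertised sharp decay estimates follow immediately from its membership in $X_1\times X_2$.
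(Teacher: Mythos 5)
Your proposal follows essentially the same route as the paper: sharp $(L^{m_j}\cap \dot H^{s+\sigma})\to\dot H^{s+\sigma}$ decay estimates for the MGT kernels via WKB/Fourier analysis, an integration by parts in time whose boundary term produces exactly the quadratic data quantity in $\epsilon_{\mathrm{Regular}}$, a contraction in the same time-weighted norms, with fractional Gagliardo--Nirenberg, Sobolev embedding and Leibniz rule handling the quadratic nonlinearities, and the stated algebraic condition guaranteeing integrability of the Duhamel time convolution (the paper checks it as $-\tfrac{n}{2\sigma}(\tfrac{2}{\max\{m_1,m_2\}}-\tfrac{1}{m_j})+\tfrac12\leqslant 0$ after splitting the integral at $t/2$). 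The only cosmetic difference is that you propose a H\"older splitting of the cross term $\partial_tu\,\partial_tv$ with mixed exponents, while the paper simply dominates it by $(\partial_tu)^2+(\partial_tv)^2$ and accepts the slower rate governed by $\max\{m_1,m_2\}$; both close under the same hypothesis.
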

\begin{remark}
Let $v(t,x)\equiv0$, i.e. the imaginary part of $\psi(t,x)\in\mb{C}$ vanishes, in Theorem \ref{Thm-regular-small-data}. It immediately turns to the real-valued nonlocal JMGT equations
\begin{align*}
\tau\partial_t^3u+\partial_t^2u+\mathcal{A}u+(\delta+\tau)\mathcal{A}\partial_tu=(1+\tfrac{B}{2A})\partial_t[(\partial_tu)^2]\ \mbox{in}\ \mb{R}^n,
\end{align*}
whose sufficient condition for the global in-time existence of solution is $m\leqslant \frac{n}{\sigma}$ with $m\in[1,2)$ and $s>[\frac{n}{2}-\sigma]_+$ carrying the small initial data in $\mb{D}_{s,m}$. Especially, in the lower dimensions $n<2\sigma$, this condition can be simply written by $m\in[1,2)$ and $s\in\mb{R}_+$.
\end{remark}
\begin{remark}
If $m_1=m_2=1$, we just require $s>[\frac{n}{2}-\sigma]_+$ and $n\geqslant \sigma$ to guarantee the global in-time existence in Theorem \ref{Thm-regular-small-data}. Particularly, for the classical situation of nonlinear acoustics, i.e. $\sigma=1$, the sufficient condition turns to $s>0$ if $n\leqslant 2$ and $s>\frac{n}{2}-1$ if $n\geqslant 3$, which exactly coincide with those for the real-valued case \cite[Theorem 3.1]{Chen-Takeda=2023}.
\end{remark}
\begin{remark}
Although the small data global in-time existence result of special case $\sigma=1$ equipping $m_1=m_2=1$ coincides with the one of real-valued case \cite{Chen-Takeda=2023}, there are some new effects in the complex-valued JMGT equations \eqref{Eq-Complex-JGMT-W} as follows:
\begin{itemize}
	\item we may assume different integrabilities $m_1\neq m_2$ (they are not necessary to be $1$) for the real and imaginary parts of initial data;
	\item we may get different decay rates of the real and imaginary parts of solution;
	\item we may consider the general situation $\sigma\in\mb{R}_+$ (that can be used in viscoelastic plates \cite{Conti-Pata-Pellicer-Quintanilla=2020}).
\end{itemize}
Concerning other regularities and asymptotic profiles of solution $(u,v)^{\mathrm{T}}$, we believe that one may follow the lengthy computations in \cite{Chen-Takeda=2023} to demonstrate
\begin{align*}
(u,v)^{\mathrm{T}}\in\left(\ml{C}(\mb{R}_+,H^{s+2\sigma})\cap \ml{C}^1(\mb{R}_+,H^{s+\sigma})\cap \ml{C}^2(\mb{R}_+,H^{s})\right),
\end{align*}
moreover,
\begin{align*}
u(t,x)&\sim \ml{F}^{-1}_{\xi\to x}\left(\tfrac{\sin(|\xi|^{\sigma}t)}{|\xi|^{\sigma}}\mathrm{e}^{-\frac{\delta}{2}|\xi|^{2\sigma}t}\right)\int_{\mb{R}^n}\big[u_1(x)+\tau u_2(x)-\tau(1+\tfrac{B}{2A})\big([u_1(x)]^2-[v_1(x)]^2\big)\big]\mathrm{d}x,\\
v(t,x)&\sim \ml{F}^{-1}_{\xi\to x}\left(\tfrac{\sin(|\xi|^{\sigma}t)}{|\xi|^{\sigma}}\mathrm{e}^{-\frac{\delta}{2}|\xi|^{2\sigma}t}\right)\int_{\mb{R}^n}\big[v_1(x)+\tau v_2(x)-\tau(2+\tfrac{B}{A})[u_1(x)v_1(x)]\big]\mathrm{d}x,
\end{align*}
in the $L^2$ framework, but these verifications are beyond the scope of
this paper.
\end{remark}

\section{Global in-time existence of rough large data solution}\setcounter{equation}{0}\label{Section-Global-Large-Data}
\hspace{5mm}This section is organized as follows. As our preparations for studying nonlinear problems, we in Subsection \ref{Sub-Section-Scaled-Linear} contribute to uniformly in-time estimates in $\widetilde{L}^{\gamma}(\mb{R}_+,E_{2,2}^{\alpha,s+\sigma})$ of scaled solution for the $\lambda$-dependent linear MGT equations via pointwise estimates in the Fourier space and the frequency-uniform decomposition techniques. By recalling some preliminaries of $E^{\alpha}_s$-type rough spaces in Subsection \ref{Sub-Section-Tools-rough}, we are able to estimate the nonlinearity and deal with large initial data in rough spaces. Consequently, in Subsection \ref{Sub-Section-Rough-Small-Data} and Subsection \ref{Sub-Section-Rough-regularity}, respectively, we justify an existence result and a regularity result of global in-time rough small data solution for the scaled JMGT equations \eqref{Eq-Complex-Scaled-JGMT-W}. Lastly, in Subsection \ref{Sub-Section-Rough-large-data} by employing the scaling argument, the smallness assumption of initial data in $E^{\alpha}_s$ if $\alpha<0$ is removed to finish the proof of Theorem \ref{Thm-rough-large-data}.

\subsection{Scaled linear MGT equations}\label{Sub-Section-Scaled-Linear}
\subsubsection{Pointwise estimates in the Fourier space}
\hspace{5mm}Let us consider the corresponding linearized models to the JMGT equations \eqref{Eq-Complex-Scaled-JGMT-W} with the vanishing right-hand side, i.e. the $\lambda$-dependent nonlocal MGT equations
\begin{align}\label{Eq-Complex-Scaled-Linear-GMT-W}
	\begin{cases}
		\tau\partial_t^3\varphi_{\lambda}+\partial_t^2\varphi_{\lambda}+\lambda^{-2\sigma}\mathcal{A}\varphi_{\lambda}+(\delta+\tau)\lambda^{-2\sigma}\mathcal{A}\partial_t\varphi_{\lambda}=0,&x\in\mb{R}^n,\ t\in\mb{R}_+,\\
		\varphi_{\lambda}(0,x)=\varphi_{0,\lambda}(x),\ \partial_t\varphi_{\lambda}(0,x)=\varphi_{1,\lambda}(x),\ \partial_t^2\varphi_{\lambda}(0,x)=\varphi_{2,\lambda}(x),&x\in\mb{R}^n,
	\end{cases}
\end{align}
whose partial Fourier transform with respect to the spatial variable ($x\mapsto\xi$) is given by
\begin{align}\label{Eq-Fourier-MGT-eta}
\begin{cases}
\tau\mathrm{d}_t^3\widehat{\varphi}_{\lambda}+\mathrm{d}_t^2\widehat{\varphi}_{\lambda}+|\eta|^{2\sigma}\widehat{\varphi}_{\lambda}+(\delta+\tau)|\eta|^{2\sigma}\mathrm{d}_t\widehat{\varphi}_{\lambda}=0,&\xi\in\mb{R}^n,\ t\in\mb{R}_+,\\
\widehat{\varphi}_{\lambda}(0,\xi)=\widehat{\varphi}_{0,\lambda}(\xi),\ \mathrm{d}_t\widehat{\varphi}_{\lambda}(0,\xi)=\widehat{\varphi}_{1,\lambda}(\xi),\ \mathrm{d}_t^2\widehat{\varphi}_{\lambda}(0,\xi)=\widehat{\varphi}_{2,\lambda}(\xi),&\xi\in\mb{R}^n,
\end{cases}
\end{align}
carrying the new parameter $\eta:=\lambda^{-1}\xi$ for the sake of convenience. Its characteristic equation for $\mu=\mu(|\xi|)$ is addressed as follows:
\begin{align}\label{Characteristic-roots}
	\tau\mu^3+\mu^2+(\delta+\tau)|\eta|^{2\sigma}\mu+|\eta|^{2\sigma}=0.
\end{align}
Clearly, the solution $\varphi_{\lambda}=\varphi_{\lambda}(t,x)$ for the third-order in-time differential equations \eqref{Eq-Complex-Scaled-Linear-GMT-W} can be expressed via
\begin{align}\label{representation-varphi-lambda}
\varphi_{\lambda}(t,x)=\sum\limits_{j\in\{0,1,2\}}\ml{K}_j(t,|D|;\lambda)\varphi_{j,\lambda}(x),
\end{align}
in which the solution's kernels $\ml{K}_j(t,|D|;\lambda)$ have their symbols $\widehat{\ml{K}}_j(t,|\xi|;\lambda)$ that can be obtained by solving the $|\xi|$-dependent ODEs \eqref{Eq-Fourier-MGT-eta}.

\begin{prop}\label{Prop-Pointwise}
Let $\lambda>0$. There exists $N_0=N_0(\tau,\delta,\sigma)>0$ but independent of $\lambda$ such that for any $|\xi|\geqslant N_0\lambda$ the following three statements hold.
\begin{enumerate}[(a)]
	\item The characteristic equation \eqref{Characteristic-roots} has one real root $\mu_1$ and two non-real conjugate roots $\mu_{2,3}$.
	\item The characteristic roots are
	\begin{align*}
	\mu_1=-\tfrac{1}{\delta+\tau}+O(\lambda^{\sigma}|\xi|^{-\sigma}),\ \mu_{2,3}=\pm i\sqrt{\tfrac{\delta+\tau}{\tau}}\lambda^{-\sigma}|\xi|^{\sigma}-\tfrac{\delta}{2\tau(\delta+\tau)}+O(\lambda^{\sigma}|\xi|^{-\sigma}).
	\end{align*}
	\item The solution's kernels in the Fourier space satisfy the next pointwise estimates:
	\begin{align*}
	|\partial_t\widehat{\ml{K}}_0(t,|\xi|;\lambda)|+|\partial_t\widehat{\ml{K}}_1(t,|\xi|;\lambda)|+|\partial_t^2\widehat{\ml{K}}_2(t,|\xi|;\lambda)|&\lesssim\mathrm{e}^{-ct}, \\ 
	|\partial_t\widehat{\ml{K}}_2(t,|\xi|;\lambda)|&\lesssim \lambda^{\sigma}|\xi|^{-\sigma}\mathrm{e}^{-ct}
	\end{align*}
	for $t\in\mb{R}_+$, where the above unexpressed multiplicative constants are independent of $\lambda$.
\end{enumerate}
\end{prop}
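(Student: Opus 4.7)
The plan is to parametrize everything in terms of $z:=|\eta|^{2\sigma}=\lambda^{-2\sigma}|\xi|^{2\sigma}$, so that \eqref{Characteristic-roots} becomes
$$\tau\mu^{3}+\mu^{2}+(\delta+\tau)z\mu+z=0.$$
Since the analysis now depends on $z$ alone, the uniformity in $\lambda$ of every constant below is automatic, and the threshold $N_{0}=N_{0}(\tau,\delta,\sigma)>0$ is chosen large enough that $z_{0}:=N_{0}^{2\sigma}$ exceeds every constant produced by the perturbation arguments; all estimates will then hold for $z\geqslant z_{0}$, i.e.\ $|\xi|\geqslant N_{0}\lambda$.

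For assertions (a) and (b) I perform a singular perturbation analysis with two scales. To locate the real root, I substitute $\mu=-\frac{1}{\delta+\tau}+\rho$; the leading $z$-terms cancel exactly, leaving an implicit equation of the form $(\delta+\tau)z\rho=F(\rho)$ with $F$ smooth and bounded, to which the implicit function theorem (or a Banach fixed-point argument) produces a unique real solution of size $|\rho|=O(z^{-1})$, hence $\mu_{1}=-\frac{1}{\delta+\tau}+O(\lambda^{\sigma}|\xi|^{-\sigma})$. For the remaining two roots I rescale $\mu=z^{1/2}r$ and divide by $z^{3/2}$; the leading balance $\tau r^{3}+(\delta+\tau)r=0$ has non-real zeros $r=\pm i\sqrt{(\delta+\tau)/\tau}$, which persist under perturbation by a second application of the implicit function theorem. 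Substituting the ansatz $\mu=\pm i\sqrt{(\delta+\tau)/\tau}\,z^{1/2}+c_{0}+O(z^{-1/2})$ back into the cubic and collecting the $O(z)$-coefficients isolates $c_{0}=-\delta/(2\tau(\delta+\tau))$. Because the coefficients of \eqref{Characteristic-roots} are real, the two non-real roots are automatically complex conjugates, which yields (a).

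For assertion (c) I use the superposition formula
$$\widehat{\ml{K}}_{j}(t,|\xi|;\lambda)=\sum_{k=1}^{3}C_{j,k}(\xi)\,\mathrm{e}^{\mu_{k}t},$$
where $C_{j,k}$ is determined by the Vandermonde system $\partial_{t}^{\ell}\widehat{\ml{K}}_{j}(0,|\xi|;\lambda)=\delta_{j\ell}$ for $\ell\in\{0,1,2\}$. The crucial root-separation estimates $|\mu_{2}-\mu_{3}|\gtrsim z^{1/2}$ and $|\mu_{1}-\mu_{2}|,|\mu_{1}-\mu_{3}|\gtrsim z^{1/2}$ supplied by (b) control the Vandermonde denominators from below, and Cramer's rule expresses each $C_{j,k}$ explicitly as a ratio whose numerator is an elementary symmetric polynomial of the two roots other than $\mu_{k}$. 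Substituting the expansions gives, for example, $|C_{2,k}|\approx z^{-1}$ for $k\in\{2,3\}$, hence $|\mu_{k}C_{2,k}|\approx z^{1/2}\cdot z^{-1}=z^{-1/2}=\lambda^{\sigma}|\xi|^{-\sigma}$, which yields the second bound of (c); the contribution of $\mu_{1}$ to $\partial_{t}\widehat{\ml{K}}_{2}$ is of even smaller order $O(z^{-1})$. The other three quantities $\partial_{t}\widehat{\ml{K}}_{0}$, $\partial_{t}\widehat{\ml{K}}_{1}$, $\partial_{t}^{2}\widehat{\ml{K}}_{2}$ admit prefactors that remain $O(1)$ in $z$ under the same bookkeeping. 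The exponential factor $\mathrm{e}^{-ct}$ comes from the uniform bound $\mathrm{Re}\,\mu_{k}\leqslant-c$, valid for all $k$ and all $z\geqslant z_{0}$ with $c=c(\tau,\delta)>0$ determined by $\min\{1/(\delta+\tau),\,\delta/(2\tau(\delta+\tau))\}$, and is visibly independent of $\lambda$.

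The main obstacle is the asymmetry between the roots: one is $O(1)$ while the other two are $O(z^{1/2})$, so the contributions from $\mu_{1}$ and from $\mu_{2,3}$ to each $C_{j,k}$ have genuinely different orders of magnitude in $z$ and must be compared carefully in order to extract the sharp prefactor $\lambda^{\sigma}|\xi|^{-\sigma}$ in the bound for $\partial_{t}\widehat{\ml{K}}_{2}$ without gaining or losing an analogous prefactor in the other three estimates. Once the expansions of (b) and the strict root separation for $|\xi|\geqslant N_{0}\lambda$ are in hand, the remaining algebra is routine.
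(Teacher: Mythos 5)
Your proposal is correct and follows essentially the same route as the paper: reduce to the $\lambda$-free variable $|\eta|=\lambda^{-1}|\xi|$ (your $z=|\eta|^{2\sigma}$) so uniformity in $\lambda$ is automatic, obtain the root asymptotics by perturbation, and bound the kernels through their explicit representation as combinations of $\mathrm{e}^{\mu_k t}$ with coefficients controlled by the root separation. The only cosmetic differences are that the paper gets (a) from the sign of the discriminant rather than from your implicit-function construction plus conjugacy, and for (c) it cites the explicit solution formula of Chen--Takeda (2023, Eq.\ (23)) where you rebuild it via the Vandermonde/Lagrange coefficients; your write-up simply fills in details the paper delegates to that reference.
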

\begin{remark}
Taking $\lambda=1$ as well as $\sigma=1$, these results exactly coincide with those in \cite[Proposition 2.1]{Chen-Ikehata=2021} and \cite[Subsection 2.1]{Chen-Takeda=2023}.
\end{remark}
\begin{proof}
Recall that $|\eta|=\lambda^{-1}|\xi|$. The statement (a) is valid due to the negative discriminant of \eqref{Characteristic-roots} satisfying
\begin{align*}
	\triangle_{\mathrm{Disc}}(|\eta|)=-4\tau(\delta+\tau)^3|\eta|^{6\sigma}+O(|\eta|^{4\sigma})<0
\end{align*}
as $|\eta|\geqslant N_0$, and in turn, $|\xi|\geqslant N_0\lambda$.
The statement (b) is derived by using asymptotic expansions straightforwardly. Then, by applying the effective representation of solution in \cite[Equation (23)]{Chen-Takeda=2023}, the statement (c) is proved.  
\end{proof}

\subsubsection{Uniform in-time estimates in the physical space}
\begin{prop}\label{Prop-Linear-Scaled}
Let $\alpha\leqslant0$, $s\in\mb{R}$ and $1\leqslant\gamma\leqslant+\infty$. Suppose that
\begin{align*}
(\varphi_{0,\lambda},\varphi_{1,\lambda},\varphi_{2,\lambda})\in E^{\alpha}_{s+\sigma}\times E^{\alpha}_{s+\sigma}\times E^{\alpha}_s,
\end{align*}
and $\mathrm{supp}\,\widehat{\varphi}_{j,\lambda}\subset\mb{R}^n_{\mathrm{Oct},N_0\lambda}$ for $\lambda>0$ and $j\in\{0,1,2\}$. Then, the solution (in the time-derivative sense) for the $\lambda$-dependent MGT equations \eqref{Eq-Complex-Scaled-Linear-GMT-W} satisfies
\begin{align*}
\|\partial_t\varphi_{\lambda}\|_{\widetilde{L}^{\gamma}(\mb{R}_+,E^{\alpha,s+\sigma}_{2,2})}\lesssim \|\varphi_{0,\lambda}\|_{E^{\alpha}_{s+\sigma}}+\|\varphi_{1,\lambda}\|_{E^{\alpha}_{s+\sigma}}+\lambda^{\sigma}\|\varphi_{2,\lambda}\|_{E^{\alpha}_s},
\end{align*}
where the above unexpressed multiplicative constant is independent of $\lambda$.
\end{prop}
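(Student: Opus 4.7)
The proof will combine the explicit representation \eqref{representation-varphi-lambda} for $\partial_t\varphi_\lambda$ with the frequency-uniform decomposition and the pointwise bounds of Proposition \ref{Prop-Pointwise}. Starting from $\partial_t\varphi_{\lambda}=\sum_{j=0}^{2}\partial_t\ml{K}_j(t,|D|;\lambda)\varphi_{j,\lambda}$, I would apply $\square_k$ followed by Plancherel's identity to reduce every piece to
\begin{align*}
\|\square_k\partial_t\ml{K}_j(t,|D|;\lambda)\varphi_{j,\lambda}(t,\cdot)\|_{L^2_x}\lesssim\bigl\|\partial_t\widehat{\ml{K}}_j(t,|\xi|;\lambda)\,\widehat{\varphi}_{j,\lambda}\bigr\|_{L^2_\xi(k+[0,1)^n)}.
\end{align*}
The Fourier support hypothesis $\mathrm{supp}\,\widehat{\varphi}_{j,\lambda}\subset\mb{R}^n_{\mathrm{Oct},N_0\lambda}$ then confines the effective integration to $|\xi|\geq N_0\lambda$, which is precisely the regime where Proposition \ref{Prop-Pointwise}(c) supplies $\lambda$-uniform pointwise estimates on $\partial_t\widehat{\ml{K}}_j$.

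For $j\in\{0,1\}$, the bound $|\partial_t\widehat{\ml{K}}_j|\lesssim e^{-ct}$, together with the fact that $\|e^{-ct}\|_{L^\gamma_t}$ is finite and independent of both $\lambda$ and $\gamma\in[1,+\infty]$, yields $\|\square_k\partial_t\ml{K}_j\varphi_{j,\lambda}\|_{L^\gamma_tL^2_x}\lesssim\|\square_k\varphi_{j,\lambda}\|_{L^2_x}$. The decisive contribution comes from $j=2$, where Proposition \ref{Prop-Pointwise}(c) produces an additional factor $\lambda^\sigma|\xi|^{-\sigma}$. Here I would exploit the first-octant geometry: for $\xi\in k+[0,1)^n$ with $k\in\mb{Z}^n_{\mathrm{Oct}}$, the identity $\xi_i=k_i+t_i$ with $t_i\in[0,1)$ and $k_i\geq 0$ gives $|\xi|\geq|k|$, hence $|\xi|^{-\sigma}\lesssim\langle k\rangle^{-\sigma}$ on each nontrivial cube; the exceptional cube $k=0$ (only relevant when $N_0\lambda<1$) is controlled directly by $\lambda^\sigma|\xi|^{-\sigma}\leq N_0^{-\sigma}$ thanks to $|\xi|\geq N_0\lambda$. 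This delivers $\|\square_k\partial_t\ml{K}_2\varphi_{2,\lambda}\|_{L^\gamma_tL^2_x}\lesssim\lambda^\sigma\langle k\rangle^{-\sigma}\|\square_k\varphi_{2,\lambda}\|_{L^2_x}$, which is the key mechanism converting the weight $\langle k\rangle^{s+\sigma}$ into $\langle k\rangle^s$ on the initial data side.

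To finish, I would multiply each of the three estimates by $\langle k\rangle^{s+\sigma}2^{\alpha|k|}$ and take the $\ell^2_k$ norm, invoking the equivalent sequence-space expression of $\|\cdot\|_{E^\alpha_\cdot}$ recalled in Subsection \ref{Sub-Section-Function-space}. The $j\in\{0,1\}$ terms assemble into $\|\varphi_{j,\lambda}\|_{E^\alpha_{s+\sigma}}$ and the $j=2$ term into $\lambda^\sigma\|\varphi_{2,\lambda}\|_{E^\alpha_s}$, with implicit constants inherited from Proposition \ref{Prop-Pointwise} and therefore independent of $\lambda$. I expect the main obstacle to be keeping this $\lambda$-uniformity throughout every step, in particular ensuring that the comparison $\lambda^\sigma|\xi|^{-\sigma}\lesssim\lambda^\sigma\langle k\rangle^{-\sigma}$ in the $j=2$ branch carries no hidden $\lambda$-dependence; this is ultimately enabled by the interplay between the shifted support $\mb{R}^n_{\mathrm{Oct},N_0\lambda}$ and the first-octant geometry built into the hypothesis, together with the $\lambda$-independent exponential rate $c$ exhibited in Proposition \ref{Prop-Pointwise}(c).
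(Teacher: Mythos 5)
Your proposal is correct and follows essentially the same route as the paper: Plancherel plus the $\lambda$-uniform pointwise bounds of Proposition \ref{Prop-Pointwise}(c) on the support $\mb{R}^n_{\mathrm{Oct},N_0\lambda}$, the conversion $\chi_{k+[0,1)^n}|\xi|^{-\sigma}\lesssim\langle k\rangle^{-\sigma}$ for the $j=2$ kernel, the $L^{\gamma}_t$ integrability of $\mathrm{e}^{-ct}$, and finally multiplication by $\langle k\rangle^{s+\sigma}2^{\alpha|k|}$ with an $\ell^2_k$ summation. Your explicit treatment of the first-octant geometry and of the cube $k=0$ when $N_0\lambda<1$ is a slightly more careful justification of the step the paper states in one line.
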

\begin{proof}
Applying the Plancherel identity as well as the pointwise estimates in the statement (c) of Proposition \ref{Prop-Pointwise} associated with the support condition of scaled initial data, one may arrive at
\begin{align*}
\|\square_k\partial_t\ml{K}_0(t,|D|;\lambda)\varphi_{0,\lambda}(\cdot)\|_{L^2}&\lesssim\mathrm{e}^{-ct}\|\square_k\varphi_{0,\lambda}\|_{L^2},\\
\|\square_k\partial_t\ml{K}_1(t,|D|;\lambda)\varphi_{1,\lambda}(\cdot)\|_{L^2}&\lesssim\mathrm{e}^{-ct}\|\square_k\varphi_{1,\lambda}\|_{L^2},\\
\|\square_k\partial_t\ml{K}_2(t,|D|;\lambda)\varphi_{2,\lambda}(\cdot)\|_{L^2}&\lesssim\lambda^{\sigma}\langle k\rangle^{-\sigma}\mathrm{e}^{-ct}\|\square_k\varphi_{2,\lambda}\|_{L^2},
\end{align*}
for all $k\in\mb{Z}^n\cap\mb{R}^n_{\mathrm{Oct},N_0\lambda}$, where we used $\chi_{k+[0,1)^n}|\xi|^{-\sigma}\lesssim \langle k\rangle^{-\sigma}$. Thanks to the integrability $\mathrm{e}^{-ct}\in L^{\gamma}_t$, it immediately follows
\begin{align*}
\sum\limits_{j\in\{0,1,2\}}\|\square_k\partial_t\ml{K}_j(t,|D|;\lambda)\varphi_{j,\lambda}\|_{L^{\gamma}_tL^2_x}\lesssim\|\square_k\varphi_{0,\lambda}\|_{L^2}+\|\square_k\varphi_{1,\lambda}\|_{L^2}+\lambda^{\sigma}\langle k\rangle^{-\sigma}\|\square_k\varphi_{2,\lambda}\|_{L^2}
\end{align*}
for all $k\in\mb{Z}^n\cap\mb{R}^n_{\mathrm{Oct},N_0\lambda}$. Next, multiplying the previous inequality by $\langle k\rangle^{s+\sigma}2^{\alpha|k|}$ and taking the $\ell^2$ norm on $\mb{Z}^n$ (but it will be automatically restricted to $k\in\mb{Z}^n\cap\mb{R}^n_{\mathrm{Oct},N_0\lambda}$), one obtains
\begin{align*}
&\|\partial_t\varphi_{\lambda}\|_{\widetilde{L}^{\gamma}(\mb{R}_+,E_{2,2}^{\alpha,s+\sigma})}\\
&\lesssim\sum\limits_{j\in\{0,1\}}\left(\sum\limits_{k\in\mb{Z}^n}\left(\langle k\rangle^{s+\sigma}2^{\alpha|k|}\|\square_k\varphi_{j,\lambda}\|_{L^2}\right)^2\right)^{1/2}+\lambda^{\sigma}\left(\sum\limits_{k\in\mb{Z}^n}\left(\langle k\rangle^s2^{\alpha|k|}\|\square_k\varphi_{2,\lambda}\|_{L^2}\right)^2\right)^{1/2}\\
&\lesssim\|\varphi_{0,\lambda}\|_{E^{\alpha}_{s+\sigma}}+\|\varphi_{1,\lambda}\|_{E^{\alpha}_{s+\sigma}}+\lambda^{\sigma}\|\varphi_{2,\lambda}\|_{E^{\alpha}_s},
\end{align*}
in which the support condition in $\mb{R}^n_{\mathrm{Oct},N_0\lambda}$ was considered again. The proof is complete.
\end{proof}

\begin{prop}\label{Prop-Inhomogeneous-Scaled}
	Let $\alpha\leqslant0$, $s\in\mb{R}$ and $1\leqslant\gamma\leqslant+\infty$. Suppose that $\mathrm{supp}\,\widehat{g}(t,\cdot)\subset\mb{R}^n_{\mathrm{Oct},N_0\lambda}$ for $\lambda>0$. Then, the inhomogeneous part (in the second-order time-derivative sense) for the $\lambda$-dependent MGT equations \eqref{Eq-Complex-Scaled-Linear-GMT-W} satisfies
	\begin{align*}
		\left\|\int_0^t\partial_t^2\ml{K}_2(t-\theta,|D|;\lambda)g(\theta,x)\mathrm{d}\theta\right\|_{\widetilde{L}^{\gamma}(\mb{R}_+,E_{2,2}^{\alpha,s+\sigma})}\lesssim\|g\|_{\widetilde{L}^1(\mb{R}_+,E_{2,2}^{\alpha,s+\sigma})},
	\end{align*}
	where the above unexpressed multiplicative constant is independent of $\lambda$.
\end{prop}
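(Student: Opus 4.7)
The plan is to mirror the strategy of Proposition \ref{Prop-Linear-Scaled}, using the pointwise bound for $\partial_t^2\widehat{\ml{K}}_2$ from Proposition \ref{Prop-Pointwise}(c) and applying Young's convolution inequality in the time variable to handle the Duhamel integral. The key observation is that, unlike $\partial_t\widehat{\ml{K}}_2$, the symbol $\partial_t^2\widehat{\ml{K}}_2(t,|\xi|;\lambda)$ satisfies $|\partial_t^2\widehat{\ml{K}}_2(t,|\xi|;\lambda)|\lesssim\mathrm{e}^{-ct}$ with no $\lambda^{\sigma}$ factor, so no $\lambda$-dependent loss should appear and the resulting constant will be uniform in $\lambda$.

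First I would fix $k\in\mb{Z}^n$ and apply $\square_k$ together with the Plancherel identity. Using the support condition $\mathrm{supp}\,\widehat{g}(t,\cdot)\subset\mb{R}^n_{\mathrm{Oct},N_0\lambda}$ to place the frequencies in the regime where the pointwise estimate of Proposition \ref{Prop-Pointwise}(c) applies (and so that the automatic truncation to $k\in\mb{Z}^n\cap\mb{R}^n_{\mathrm{Oct},N_0\lambda}$ occurs after summation), this yields the pointwise-in-time bound
\begin{align*}
\left\|\square_k\int_0^t\partial_t^2\ml{K}_2(t-\theta,|D|;\lambda)g(\theta,\cdot)\mathrm{d}\theta\right\|_{L^2_x}\lesssim \int_0^t\mathrm{e}^{-c(t-\theta)}\|\square_kg(\theta,\cdot)\|_{L^2_x}\mathrm{d}\theta.
\end{align*}

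Next, I would recognize the right-hand side as a one-sided time-convolution $(\mathbf{1}_{[0,+\infty)}\mathrm{e}^{-ct})\ast_{(t)}\|\square_kg(\cdot,\cdot)\|_{L^2_x}$ and invoke Young's convolution inequality with the exponent relation $1+\tfrac{1}{\gamma}=\tfrac{1}{\gamma}+\tfrac{1}{1}$. Since $\mathrm{e}^{-ct}\in L^{\gamma}_t(\mb{R}_+)$ for every $\gamma\in[1,+\infty]$, this gives
\begin{align*}
\left\|\square_k\int_0^t\partial_t^2\ml{K}_2(t-\theta,|D|;\lambda)g(\theta,\cdot)\mathrm{d}\theta\right\|_{L^{\gamma}_tL^2_x}\lesssim \|\square_kg\|_{L^1_tL^2_x},
\end{align*}
with a constant depending only on $c$ (hence independent of $k$ and $\lambda$).

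Finally, multiplying by $\langle k\rangle^{s+\sigma}2^{\alpha|k|}$ and taking the $\ell^2$ norm over $k\in\mb{Z}^n$, the equivalent expression \eqref{norm-rough-type} for the $\widetilde{L}^{\gamma}(\mb{R}_+,E^{\alpha,s+\sigma}_{2,2})$-norm produces exactly the claimed estimate. I do not anticipate any serious obstacle here: the argument is strictly parallel to that of Proposition \ref{Prop-Linear-Scaled}, with the Duhamel convolution replacing the direct propagator estimate. The only point requiring care is to confirm that the decay constant $c>0$ supplied by Proposition \ref{Prop-Pointwise}(c) is genuinely $\lambda$-uniform, which is already encoded in the formulation of that proposition under the support restriction $|\xi|\geqslant N_0\lambda$.
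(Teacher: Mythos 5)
Your proposal is correct and follows essentially the same route as the paper's proof: apply $\square_k$ with the Plancherel identity and the bound $|\partial_t^2\widehat{\ml{K}}_2(t,|\xi|;\lambda)|\lesssim\mathrm{e}^{-ct}$ from Proposition \ref{Prop-Pointwise}(c) on the support $\mb{R}^n_{\mathrm{Oct},N_0\lambda}$, use the Young convolution inequality in $t$ with $\mathrm{e}^{-ct}\in L^{\gamma}_t$, then multiply by $\langle k\rangle^{s+\sigma}2^{\alpha|k|}$ and take the $\ell^2$ norm over $k$. Your remark on the $\lambda$-uniformity of the constant $c$ matches the paper's formulation as well.
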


\begin{proof}
Let us now apply the Plancherel identity again associated with the statement (c) in Proposition \ref{Prop-Pointwise} to deduce
\begin{align*}
\left\|\square_k\int_0^t\partial_t^2\ml{K}_2(t-\theta,|D|;\lambda)g(\theta,x)\mathrm{d}\theta\right\|_{L^{\gamma}_tL^2_x}&\lesssim\left\|\int_{\mb{R}_+}|\partial_t^2\widehat{\ml{K}}_2(t-\theta,|k|;\lambda)|\,\|\square_kg(\theta,\cdot)\|_{L^2_x}\mathrm{d}\theta\right\|_{L^{\gamma}_t}\\
&\lesssim\big\|\mathrm{e}^{-ct}\ast_{(t)}\|\square_kg(t,\cdot)\|_{L^2_x}\big\|_{L^{\gamma}_t}\\
%&\lesssim\|\mathrm{e}^{-ct}\|_{L^{\gamma}_t}\|\square_kg\|_{L^{1}_tL^2_x}\\
&\lesssim \|\square_kg\|_{L^{1}_tL^2_x}
\end{align*}
for all $k\in\mb{Z}^n\cap\mb{R}^n_{\mathrm{Oct},N_0\lambda}$ thanks to the support condition of $\widehat{g}(t,\cdot)$, where we employed the Young convolution inequality with respect to $t$ and $\mathrm{e}^{-ct}\in L^{\gamma}_t$. Analogously to the proof of Proposition \ref{Prop-Linear-Scaled}, via multiplying the last estimate by $\langle k\rangle^{s+\sigma}2^{\alpha|k|}$ and taking the $\ell^2$ norm on $\mb{Z}^n$, we are able to conclude our desired estimate.
\end{proof}

\subsection{Preliminaries in the $E^{\alpha}_s$-type spaces}\label{Sub-Section-Tools-rough}
\hspace{5mm}Let us begin with recalling a multiplicative estimate of two generalized functions from \cite[Proposition 4.1 with $p=2$, $\beta_p=0$ and $s\mapsto s+\sigma$]{Chen-Reissig=2025}, whose philosophy is originally stated in \cite[Lemma 2.5]{Chen-Wang-Wang=2023}.
\begin{prop}\label{Prop=Algebra-Property}
Let $\alpha\leqslant 0$ and $s\geqslant\frac{n}{2}-\sigma$. Suppose that $\mathrm{supp}\,\widehat{g}_j(t,\cdot)\subset\mb{R}^n_{\mathrm{Oct}}$ for $j\in\{1,2\}$. Then, the following estimate holds:
\begin{align*}
\|g_1g_2\|_{\widetilde{L}^1(\mb{R}_+,E_{2,2}^{\alpha,s+\sigma})}\lesssim\|g_1\|_{\widetilde{L}^{2}(\mb{R}_+,E^{\alpha,s+\sigma}_{2,2})}\|g_2\|_{\widetilde{L}^{2}(\mb{R}_+,E^{\alpha,s+\sigma}_{2,2})}.
\end{align*}
\end{prop}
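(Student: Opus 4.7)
The plan is to adapt the frequency-uniform (modulation-space) decomposition strategy of \cite{Wang-Zhao-Guo=2006,Chen-Wang-Wang=2023} to the present time-weighted setting. I would begin by decomposing each factor as $g_j=\sum_{k_j\in\mb{Z}^n}\square_{k_j}g_j$, where the Fourier support hypothesis forces $k_j\in\mb{Z}^n_{\mathrm{Oct}}$. The algebraic observation $\mathrm{supp}\,\ml{F}(\square_{k_1}g_1\cdot\square_{k_2}g_2)\subset k_1+k_2+[0,2)^n$ then shows that $\square_k(g_1g_2)$ only receives contributions from pairs $(k_1,k_2)\in(\mb{Z}^n_{\mathrm{Oct}})^2$ with $k-k_1-k_2$ in a fixed bounded subset of $\mb{Z}^n$, i.e.\ essentially $k=k_1+k_2+O(1)$.

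Next, each block would be estimated by Bernstein followed by H\"older. The unit-cube Fourier localization gives $\|\square_{k_j}g_j(t,\cdot)\|_{L^{\infty}_x}\lesssim\|\square_{k_j}g_j(t,\cdot)\|_{L^2_x}$, whence the spatial H\"older $L^{\infty}_x\cdot L^2_x\hookrightarrow L^2_x$ combined with the temporal H\"older $L^2_t\cdot L^2_t\hookrightarrow L^1_t$ produces
\begin{align*}
\|\square_{k_1}g_1\cdot\square_{k_2}g_2\|_{L^1_tL^2_x}\lesssim\|\square_{k_1}g_1\|_{L^2_tL^2_x}\|\square_{k_2}g_2\|_{L^2_tL^2_x}.
\end{align*}
The polynomial weight $\langle k\rangle^{s+\sigma}$ would be redistributed by a symmetric split: assuming without loss of generality $|k_1|\geqslant|k_2|$ (the opposite case being identical) one has $\langle k\rangle^{s+\sigma}\lesssim\langle k_1\rangle^{s+\sigma}$, leaving the square-summable residue $\langle k_2\rangle^{-(s+\sigma)}$ exactly at the critical Sobolev index $s+\sigma\geqslant\frac{n}{2}$.

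The structural heart of the argument is the submultiplicativity of the weight $\xi\mapsto 2^{\alpha|\xi|}$ on the first octant. Because all coordinates of $k_1$ and $k_2$ are nonnegative, the additivity $|k_1+k_2|_1=|k_1|_1+|k_2|_1$ holds exactly, and consequently
\begin{align*}
2^{\alpha|k|}\lesssim 2^{\alpha|k_1|}\cdot 2^{\alpha|k_2|}
\end{align*}
up to an $(\alpha,n)$-dependent constant. Placing the split weights onto the respective factors and introducing the sequences $a_{k_j}:=\langle k_j\rangle^{s+\sigma}2^{\alpha|k_j|}\|\square_{k_j}g_j\|_{L^2_tL^2_x}$ (whose $\ell^2$-norms recover the right-hand side via \eqref{norm-rough-type}), the left-hand side is reduced to a discrete convolution $a\ast b$ on $\mb{Z}^n_{\mathrm{Oct}}$, and a Young-type inequality $\ell^2\ast\ell^1\hookrightarrow\ell^2$ together with Cauchy-Schwarz on the weighted $\ell^1$-sum closes the estimate.

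The main obstacle is precisely this exponential weight transfer in the case $\alpha<0$: if the Fourier supports were allowed outside $\mb{R}^n_{\mathrm{Oct}}$, partial cancellation could make $|k_1+k_2|$ far smaller than $|k_1|+|k_2|$, and then $2^{\alpha|k|}$ on the output would outstrip $2^{\alpha|k_1|}2^{\alpha|k_2|}$ from the inputs, breaking the multiplicative estimate. The first-octant restriction---legitimate here only thanks to the complex-valued setting---precisely eliminates this cancellation and is what makes the algebra-type property uniform over every $\alpha\leqslant 0$, including the Sobolev endpoint $\alpha=0$.
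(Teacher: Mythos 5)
Your overall strategy --- frequency-uniform decomposition, the support addition $k\approx k_1+k_2$ on the octant, Bernstein plus H\"older at the block level, and a weighted discrete Young/Cauchy--Schwarz resummation --- is precisely the philosophy behind this result; note that the paper itself offers no proof but quotes it from \cite[Proposition 4.1]{Chen-Reissig=2025}, going back to \cite[Lemma 2.5]{Chen-Wang-Wang=2023}. However, two of your steps fail as written. The first is the exponential-weight transfer, which you call the structural heart: from the exact additivity $|k_1+k_2|_1=|k_1|_1+|k_2|_1$ on $\mb{Z}^n_{\mathrm{Oct}}$ you conclude $2^{\alpha|k|}\lesssim 2^{\alpha|k_1|}2^{\alpha|k_2|}$ ``up to a constant''. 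If $|k|$ denotes the Euclidean norm appearing throughout Definition \ref{Defn-E-space} (the same $|\xi|$ as in $\langle\xi\rangle$ and $|\xi|^{2\sigma}$), this is a non sequitur and in fact false for $n\geqslant 2$: taking $\alpha<0$, $k_1=(N,0,\dots,0)$ and $k_2=(0,N,0,\dots,0)$ gives $2^{\alpha|k_1+k_2|}=2^{\sqrt{2}\,\alpha N}$ against $2^{\alpha|k_1|}2^{\alpha|k_2|}=2^{2\alpha N}$, whose ratio $2^{(2-\sqrt{2})|\alpha|N}$ is unbounded; the gap between $2^{\alpha|k|}$ and $2^{\alpha|k|_1}$ is exponential in $|k|$, not an $(\alpha,n)$-dependent constant, so the two weights generate genuinely different spaces when $\alpha<0$. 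Your argument is valid only if the weight is read with the $\ell^1$-norm; indeed, testing the claimed estimate on two unit bumps of $\widehat{g}_1,\widehat{g}_2$ placed at these two lattice points shows that under the Euclidean reading even the stated inequality breaks down, so this convention must be identified and used explicitly rather than absorbed into a constant.

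The second gap is at the borderline regularity. You assert that the residue $\langle k_2\rangle^{-(s+\sigma)}$ is square-summable ``exactly at the critical Sobolev index $s+\sigma\geqslant\frac{n}{2}$''. It is not: $\sum_{k\in\mb{Z}^n}\langle k\rangle^{-n}=+\infty$, so the Cauchy--Schwarz step $\|\langle\cdot\rangle^{-(s+\sigma)}b\|_{\ell^1}\leqslant\|\langle\cdot\rangle^{-(s+\sigma)}\|_{\ell^2}\|b\|_{\ell^2}$ that closes your $\ell^2\ast\ell^1\hookrightarrow\ell^2$ argument requires the strict inequality $s+\sigma>\frac{n}{2}$, whereas the hypothesis admits $s=\frac{n}{2}-\sigma$. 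As written, your proof covers only $s>\frac{n}{2}-\sigma$; the endpoint needs a separate treatment and is genuinely delicate, since for $\alpha=0$ and separable inputs $g_j(t,x)=h(t)f_j(x)$ the claim reduces to the endpoint algebra estimate $H^{n/2}\cdot H^{n/2}\to H^{n/2}$, whose classical failure persists for nonnegative $\widehat{f}$ supported in the first octant. You should either supply an argument for the borderline case (in whichever weight convention makes the first point sound) or record the restriction to strict inequality.
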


We next think of \cite[Lemma 2.7]{Chen-Wang-Wang=2023} for a scaling property of $\phi_{\lambda}(x):=\phi(\lambda x)$ in $E^{\alpha}_s$ with $\alpha\leqslant 0$, which implies that $\phi_{\lambda}$ in the  $E^{\alpha}_s$ norm possibly ($s\in\mb{R}$ if $\alpha<0$; $s<\frac{n}{2}$ if $\alpha=0$) vanishes as $\lambda\to+\infty$ provided that the support of $\widehat{\phi}$ is away from the origin.
\begin{prop}\label{Prop=Scaling-Property-1}Let $\alpha\leqslant 0$ and $s\in\mb{R}$. Suppose that $\phi\in E^{\alpha}_s$ and $\mathrm{supp}\,\widehat{\phi}\subset\{\xi\in\mb{R}^n: |\xi|\geqslant N_0\}$ for a constant $N_0>0$. Then, the following estimate holds:
	\begin{align*}
		\|\phi_{\lambda}\|_{E^{\alpha}_s}\lesssim \lambda^{-\frac{n}{2}+\max\{s,0\}}2^{{\alpha(\lambda-1)N_0}}\|\phi\|_{E^{\alpha}_s}
	\end{align*}
	for any $\lambda>1$.
\end{prop}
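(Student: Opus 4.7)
The plan is to compute $\|\phi_\lambda\|_{E^\alpha_s}$ directly in the Fourier side using the scaling identity $\widehat{\phi_\lambda}(\xi) = \lambda^{-n}\widehat{\phi}(\lambda^{-1}\xi)$ and a change of variables. Starting from Definition \ref{Defn-E-space} and substituting $\eta = \xi/\lambda$ in the integral
\begin{align*}
\|\phi_\lambda\|_{E^\alpha_s}^2 = \int_{\mathbb{R}^n} \langle\xi\rangle^{2s} 2^{2\alpha|\xi|}\lambda^{-2n}|\widehat{\phi}(\xi/\lambda)|^2\,\mathrm{d}\xi,
\end{align*}
the Jacobian produces a factor $\lambda^n$ and I would arrive at the identity
\begin{align*}
\|\phi_\lambda\|_{E^\alpha_s}^2 = \lambda^{-n}\int_{\mathbb{R}^n} \langle\lambda\eta\rangle^{2s} 2^{2\alpha\lambda|\eta|}|\widehat{\phi}(\eta)|^2\,\mathrm{d}\eta,
\end{align*}
where the integrand is supported in $\{|\eta|\geqslant N_0\}$.

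Next, I would control the two $\lambda$-dependent weights separately on this support. For the bracket weight, the inequality $\langle\lambda\eta\rangle\leqslant\lambda\langle\eta\rangle$ (valid for $\lambda>1$) gives $\langle\lambda\eta\rangle^{2s}\leqslant\lambda^{2s}\langle\eta\rangle^{2s}$ when $s\geqslant 0$, while for $s<0$ the monotonicity $\langle\lambda\eta\rangle\geqslant\langle\eta\rangle$ yields $\langle\lambda\eta\rangle^{2s}\leqslant\langle\eta\rangle^{2s}$. Both cases combine as $\langle\lambda\eta\rangle^{2s}\leqslant\lambda^{2\max\{s,0\}}\langle\eta\rangle^{2s}$. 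For the exponential weight, I would split
\begin{align*}
2^{2\alpha\lambda|\eta|}=2^{2\alpha|\eta|}\cdot 2^{2\alpha(\lambda-1)|\eta|},
\end{align*}
and exploit the fact that $\alpha(\lambda-1)\leqslant 0$ so that the function $t\mapsto 2^{2\alpha(\lambda-1)t}$ is non-increasing. Thus on the support $|\eta|\geqslant N_0$ it is bounded above by its value at $N_0$, namely $2^{2\alpha(\lambda-1)N_0}$.

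Inserting these two pointwise bounds into the integral and recognizing the remaining factor as $\|\phi\|_{E^\alpha_s}^2$, I would obtain
\begin{align*}
\|\phi_\lambda\|_{E^\alpha_s}^2\lesssim \lambda^{-n+2\max\{s,0\}}\,2^{2\alpha(\lambda-1)N_0}\,\|\phi\|_{E^\alpha_s}^2,
\end{align*}
and the stated estimate follows by taking square roots. The only genuine subtlety is the case split on the sign of $s$ when comparing $\langle\lambda\eta\rangle$ with $\langle\eta\rangle$, which is what forces the exponent $\max\{s,0\}$ rather than $s$. The support hypothesis is essential: without a positive lower bound on $|\eta|$, the factor $2^{2\alpha(\lambda-1)|\eta|}$ would reach $1$ near the origin and produce no decay in $\lambda$.
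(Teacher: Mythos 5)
Your proof is correct: the Fourier scaling identity, the change of variables, the bound $\langle\lambda\eta\rangle^{2s}\leqslant\lambda^{2\max\{s,0\}}\langle\eta\rangle^{2s}$ (with the right case split on the sign of $s$), and the use of $\alpha(\lambda-1)\leqslant0$ together with the support condition $|\eta|\geqslant N_0$ to extract the factor $2^{2\alpha(\lambda-1)N_0}$ are all valid, and they yield exactly the stated estimate. The paper itself gives no proof here --- it simply cites \cite[Lemma 2.7]{Chen-Wang-Wang=2023} --- and your direct computation is the natural verification of that lemma, so nothing is missing.
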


To end this subsection, we consider \cite[Proposition 4.3 with $\kappa=0$ and $\beta_1=\beta_{\infty}=\sigma$]{Chen-Reissig=2025}, which is a trivial generalization of \cite[Lemma 2.8]{Chen-Wang-Wang=2023}. This proposition indeed addresses that the scaled solution $\psi_{\lambda}$ (i.e. $g$) may control the solution $\psi$ (i.e. $g_{1/\lambda}$) in suitable $E^{\alpha}_s$-type rough spaces with a loss of radius from $\alpha$ to $\lambda\alpha$ for a fixed parameter $\lambda$.
\begin{prop}\label{Prop=Scaling-Property-2}
	Let $\alpha\leqslant 0$ and $s\in\mb{R}$. Suppose that $g\in\widetilde{L}^1(\mb{R}_+,E^{\alpha,s+\sigma}_{2,2})\cap \widetilde{L}^{\infty}(\mb{R}_+,E^{\alpha,s+\sigma}_{2,2})$ with $g_{1/\lambda}(t,x):=g(t,\lambda^{-1}x)$. Then, the following estimate holds:
	\begin{align*}
		\|g_{1/\lambda}\|_{\widetilde{L}^1(\mb{R}_+,E^{\lambda\alpha,s+\sigma}_{2,2})\cap\widetilde{L}^{\infty}(\mb{R}_+,E^{\lambda\alpha,s+\sigma}_{2,2})}\leqslant 2^{(-\alpha)c\lambda}\|g\|_{\widetilde{L}^1(\mb{R}_+,E^{\alpha,s+\sigma}_{2,2})\cap \widetilde{L}^{\infty}(\mb{R}_+,E^{\alpha,s+\sigma}_{2,2})}
	\end{align*}
	for any $\lambda>1$.
\end{prop}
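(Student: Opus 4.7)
The strategy is to reduce the claim, via Plancherel and the Fourier rescaling identity $\widehat{g_{1/\lambda}}(t,\xi)=\lambda^n\widehat{g}(t,\lambda\xi)$, to a cube-counting argument where the weight $2^{\lambda\alpha|k|}$ on the rescaled side is compared with $2^{\alpha|k'|}$ on the original side, with $k'$ ranging over the roughly $\lceil\lambda\rceil^n$ unit cubes that cover the dilated cube $\lambda k+[0,\lambda)^n$. Concretely, I first fix $t\in\mb{R}_+$ and $k\in\mb{Z}^n$, apply Plancherel together with the change of variables $\eta=\lambda\xi$, and tile the dilated cube by unit cubes to obtain
\begin{align*}
\|\square_kg_{1/\lambda}(t,\cdot)\|_{L^2_x}^2=\lambda^n\int_{\lambda k+[0,\lambda)^n}|\widehat{g}(t,\eta)|^2\,\mathrm{d}\eta\lesssim\lambda^n\sum_{k'\in J_k}\|\square_{k'}g(t,\cdot)\|_{L^2_x}^2,
\end{align*}
where $J_k:=\{k'\in\mb{Z}^n:(k'+[0,1)^n)\cap(\lambda k+[0,\lambda)^n)\neq\emptyset\}$ has cardinality at most $\lceil\lambda\rceil^n$.

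I then pass to mixed norms: for $\gamma=\infty$ one just takes $\sup_t$, while for $\gamma=1$ I use Minkowski's integral inequality to swap $L^1_t$ and $\ell^2_{k'}$ (valid since $1\leqslant 2$), so that in both cases
\begin{align*}
\|\square_kg_{1/\lambda}\|_{L^\gamma_tL^2_x}\lesssim\lambda^{n/2}\Bigl(\sum_{k'\in J_k}\|\square_{k'}g\|_{L^\gamma_tL^2_x}^2\Bigr)^{1/2}.
\end{align*}
The key step is the weight comparison: for $k'\in J_k$ the geometric condition forces $|k'|\leqslant\lambda|k|+C\lambda$, and because $\alpha\leqslant 0$ this gives $2^{\lambda\alpha|k|}\leqslant 2^{\alpha|k'|}\cdot 2^{(-\alpha)C\lambda}$, while the polynomial weight satisfies $\langle k\rangle\lesssim\lambda^{-1}\langle k'\rangle+1$, contributing at most a polynomial-in-$\lambda$ loss in the ratio $\langle k\rangle^{s+\sigma}/\langle k'\rangle^{s+\sigma}$ that can be absorbed into an enlarged constant $c$ inside $2^{(-\alpha)c\lambda}$.

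Finally, I multiply the single-block estimate by $\langle k\rangle^{2(s+\sigma)}2^{2\lambda\alpha|k|}$, sum in $k$, and reorder the double sum as an outer sum over $k'$ with inner sum over those $k$ with $k'\in J_k$. Because the cubes $\{\lambda k+[0,\lambda)^n\}_{k\in\mb{Z}^n}$ tile $\mb{R}^n$, each $k'$ lies in at most $2^n$ of the sets $J_k$, so the re-summation costs only an absolute constant; collecting the $\lambda^n$ prefactor from Step 1, the combinatorial count, and the weight-comparison factor $2^{2(-\alpha)C\lambda}$ gives the stated bound uniformly in $\gamma\in\{1,\infty\}$ after a final constant adjustment. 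The main obstacle is precisely this bookkeeping: the combinatorial count $|J_k|\lesssim\lceil\lambda\rceil^n$, the $\lambda^{n/2}$ rescaling prefactor, and the polynomial-weight mismatch must all be swept into the single exponential $2^{(-\alpha)c\lambda}$, which is clean when $\alpha<0$ strictly since the exponential beats every polynomial in $\lambda$, but requires a uniform-in-$k$ enlargement of $c$ to accommodate those losses.
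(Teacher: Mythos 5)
Your overall route --- Plancherel plus the rescaling identity $\widehat{g_{1/\lambda}}(t,\xi)=\lambda^n\widehat{g}(t,\lambda\xi)$, tiling the dilated cube $\lambda k+[0,\lambda)^n$ by unit cubes, comparing the exponential weights $2^{\lambda\alpha|k|}$ and $2^{\alpha|k'|}$, and re-summing using the bounded overlap of the sets $J_k$ --- is the natural one; note the paper does not prove this proposition itself but imports it from \cite{Chen-Reissig=2025,Chen-Wang-Wang=2023}, so your direct argument is a reasonable reconstruction. However, one step fails as written: for $\gamma=1$ you invoke Minkowski's inequality to claim
\begin{align*}
\Bigl\|\bigl(\textstyle\sum_{k'\in J_k}\|\square_{k'}g(t,\cdot)\|_{L^2_x}^2\bigr)^{1/2}\Bigr\|_{L^1_t}\leqslant\Bigl(\textstyle\sum_{k'\in J_k}\|\square_{k'}g\|_{L^1_tL^2_x}^2\Bigr)^{1/2},
\end{align*}
but Minkowski goes the other way: since $1\leqslant 2$ one has $\ell^2_{k'}(L^1_t)\leqslant L^1_t(\ell^2_{k'})$, not the reverse. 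Two bumps with disjoint time supports, $h_1=\chi_{[0,1]}$ and $h_2=\chi_{[1,2]}$, give $2$ on the left and $\sqrt{2}$ on the right, so your single-block estimate $\|\square_kg_{1/\lambda}\|_{L^1_tL^2_x}\lesssim\lambda^{n/2}(\sum_{k'\in J_k}\|\square_{k'}g\|_{L^1_tL^2_x}^2)^{1/2}$ is not justified by the argument you give (the $\gamma=\infty$ case is fine, because $L^\infty_t(\ell^2_{k'})\leqslant\ell^2_{k'}(L^\infty_t)$ is the correct direction).

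The gap is repairable within your own bookkeeping: from the pointwise-in-$t$ bound pass to the $\ell^1_{k'}$ sum, apply the triangle inequality in $L^\gamma_t$ (valid for every $\gamma\in[1,+\infty]$), and then Cauchy--Schwarz over the finite index set $J_k$; this costs an extra factor $|J_k|^{1/2}\lesssim\lambda^{n/2}$, which is exactly the kind of polynomial-in-$\lambda$ loss you are already sweeping into $2^{(-\alpha)c\lambda}$. Be aware, though, of what that absorption requires: to dominate a factor $\lambda^{N}$ (here $N$ collects $\tfrac{n}{2}$ from the Jacobian, $\tfrac{n}{2}$ from the cube count, and $|s+\sigma|$ if $s+\sigma<0$) by $2^{(-\alpha)c\lambda}$ for all $\lambda>1$ one needs $c$ of size roughly $N/(-\alpha)$, so $c$ depends on $\alpha$ (harmless for the paper's application, where $\lambda$ is eventually fixed), and the absorption is impossible at the endpoint $\alpha=0$, where the right-hand constant equals $1$; your proof therefore really yields the estimate for $\alpha<0$, which is the only regime in which the paper invokes the proposition (for $\alpha=0$ it takes $\lambda=1$). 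With the Minkowski step replaced as above, the remaining ingredients --- the weight comparison $|k'|\leqslant\lambda|k|+C\lambda$ giving $2^{\lambda\alpha|k|}\leqslant 2^{(-\alpha)C\lambda}2^{\alpha|k'|}$, and the overlap bound that each $k'$ lies in at most $2^n$ of the sets $J_k$ --- are correct.
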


\subsection{Existence of global in-time rough small data scaled solution}\label{Sub-Section-Rough-Small-Data}
\hspace{5mm}We in this subsection consider the scaled JMGT equations \eqref{Eq-Complex-Scaled-JGMT-W} with the rough small initial data having their Fourier support restrictions (with the radius $N_0\lambda$ for $\lambda\geqslant1$), precisely,
\begin{align}\label{Additional-Assumption-Support}
\mathrm{supp}\,\widehat{\psi}_{j,\lambda}\subset\mb{R}^n_{\mathrm{Oct},N_0\lambda}\  \mbox{for}\  j\in\{0,1,2\},
\end{align}
strongly motivated by Proposition \ref{Prop-Linear-Scaled}, whose small size are denoted by
\begin{align*}
\|(\psi_{0,\lambda},\psi_{1,\lambda},\psi_{2,\lambda})\|_{E^{\alpha}_{s+\sigma}\times E^{\alpha}_{s+\sigma}\times E^{\alpha}_s}+\|\psi_{1,\lambda}^2\|_{E^{\alpha}_s}=\epsilon.
\end{align*}
After fixing $\lambda\geqslant1$, we may choose $\epsilon>0$ being sufficiently small.

Let us introduce the global in-time rough solution space (in the time-derivative sense) 
\begin{align*}
\ml{X}_{\alpha,s,\sigma}:=\widetilde{L}^2(\mb{R}_+,E^{\alpha,s+\sigma}_{2,2})\ \mbox{with}\ s\geqslant\tfrac{n}{2}-\sigma 
\end{align*}
carrying its corresponding norm \eqref{norm-rough-type}. We are going to demonstrate the existence of solution in the $\lambda$-weighted set as follows:
\begin{align*}
\ml{B}_{\nu}^{\lambda}:=\left\{\partial_t\psi_{\lambda}\in\ml{X}_{\alpha,s,\sigma}: \mathrm{supp}\,\partial_t\widehat{\psi}_{\lambda}(t,\cdot)\subset\mb{R}^n_{\mathrm{Oct},N_0\lambda}\  \mbox{and}\  \|\partial_t\psi_{\lambda}\|_{\ml{B}_{\nu}^{\lambda}}:=\lambda^{-\sigma}\|\partial_t\psi_{\lambda}\|_{\ml{X}_{\alpha,s,\sigma}}\leqslant\nu 
\right\}
\end{align*}
with a parameter $\nu>0$ that will be determined later (like $\nu\approx \epsilon$).

Then, strongly motivated by the local in-time mild solution (i.e. an equivalent integral form, cf. \cite[Subsection 3.2]{Chen-Takeda=2023}), one may define the following nonlinear integral operator:
\begin{align*}
\ml{N}_{\lambda}: \partial_t\psi_{\lambda}\in\ml{B}_{\nu}^{\lambda}\to\ml{N}_{\lambda}[\partial_t\psi_{\lambda}]:=\partial_t\psi_{\lambda}^{\lin}+\partial_t\psi_{\lambda}^{\nlin},
\end{align*}
where $\partial_t\psi_{\lambda}^{\lin}:=\partial_t\varphi_{\lambda}$ is the solution for its corresponding $\lambda$-dependent MGT equations \eqref{Eq-Complex-Scaled-Linear-GMT-W}, and $\partial_t\psi_{\lambda}^{\nlin}=\partial_t\psi_{\lambda}^{\nlin}(t,x)$ is defined via
\begin{align*}
\partial_t\psi_{\lambda}^{\nlin}(t,x)&:=(1+\tfrac{B}{2A})\int_0^t\partial_t\ml{K}_2(t-\theta,|D|;\lambda)\partial_t[(\partial_t\psi_{\lambda}(\theta,x))^2]\mathrm{d}\theta\\
&\ =-(1+\tfrac{B}{2A})\partial_t\ml{K}_2(t,|D|;\lambda)[\psi_{1,\lambda}(x)]^2-(1+\tfrac{B}{2A})\int_0^t\partial_t^2\ml{K}_2(t-\theta,|D|;\lambda)[\partial_t\psi_{\lambda}(\theta,x)]^2\mathrm{d}\theta
\end{align*}
by employing an integration by parts in regard to $t$ as well as $\ml{K}_2(0,|D|;\lambda)\equiv0\equiv\partial_t\ml{K}_2(0,|D|;\lambda)$.

The support condition \eqref{Additional-Assumption-Support} associated with the regularities of initial data allows to use Proposition \ref{Prop-Linear-Scaled} and obtain
\begin{align*}
\|\partial_t\psi_{\lambda}^{\lin}\|_{\ml{B}_{\nu}^{\lambda}}\lesssim\underbrace{\max\{\lambda^{-\sigma},1\}}_{\leqslant 1}\|(\psi_{0,\lambda},\psi_{1,\lambda},\psi_{2,\lambda})\|_{E^{\alpha}_{s+\sigma}\times E^{\alpha}_{s+\sigma}\times E^{\alpha}_s}.
\end{align*}
The support condition of $\partial_t\widehat{\psi}_{\lambda}^{\lin}(t,\cdot)$ is restricted by those of initial data according to its representation \eqref{representation-varphi-lambda} so that $\partial_t\psi_{\lambda}^{\lin}\in\ml{B}_{\nu}^{\lambda}$. Thanks to the support condition from $\partial_t\psi_{\lambda}\in\ml{B}_{\nu}^{\lambda}$ and $s\geqslant\frac{n}{2}-\sigma$, we are able to apply Proposition \ref{Prop-Inhomogeneous-Scaled} and Proposition \ref{Prop=Algebra-Property} to arrive at
\begin{align*}
\|\partial_t\psi_{\lambda}^{\nlin}\|_{\ml{B}_{\nu}^{\lambda}}&\lesssim\lambda^{-\sigma}\|\partial_t\ml{K}_2(t,|D|;\lambda)\psi_{1,\lambda}^2\|_{\widetilde{L}^2(\mb{R}_+,E_{2,2}^{\alpha,s+\sigma})}\\
&\quad+\lambda^{-\sigma}\left\|\int_0^t\partial_t^2\ml{K}_2(t-\theta,|D|;\lambda)[\partial_t\psi_{\lambda}(\theta,x)]^2\mathrm{d}\theta\right\|_{\widetilde{L}^2(\mb{R}_+,E_{2,2}^{\alpha,s+\sigma})}\\
%&\lesssim \|[\psi_{1,\lambda}]^2\|_{E^{\alpha}_s}+\lambda^{-\sigma}\|[\partial_t\psi_{\lambda}]^2\|_{\widetilde{L}^1(\mb{R}_+,E_{2,2}^{\alpha,s+\sigma})}\\
&\lesssim\|\psi_{1,\lambda}^2\|_{E^{\alpha}_s}+\lambda^{-\sigma}\|\partial_t\psi_{\lambda}\|^2_{\widetilde{L}^2(\mb{R}_+,E_{2,2}^{\alpha,s+\sigma})}.
\end{align*}
Note that the support of $\partial_t\widehat{\psi}_{\lambda}^{\nlin}(t,\cdot)$ is obviously localized in $\mb{R}^n_{\mathrm{Oct},N_0\lambda}$ with the aid of $\mathrm{supp}\,\widehat{g_1g_2}(t,\cdot)\subset\mathrm{supp}\,\widehat{g}_1(t,\cdot)+\mathrm{supp}\,\widehat{g}_2(t,\cdot)$.
That is to say, there exist positive constants $C_0,C_1$ independent of $\lambda\geqslant 1$ such that
\begin{align*}
\|\ml{N}_{\lambda}[\partial_t\psi_{\lambda}]\|_{\ml{B}^{\lambda}_{\nu}}&\lesssim\|(\psi_{0,\lambda},\psi_{1,\lambda},\psi_{2,\lambda})\|_{E^{\alpha}_{s+\sigma}\times E^{\alpha}_{s+\sigma}\times E^{\alpha}_s}+\|\psi_{1,\lambda}^2\|_{E^{\alpha}_s}+\lambda^{-\sigma}\|\partial_t\psi_{\lambda}\|_{\ml{X}_{\alpha,s,\sigma}}^2\\
&\leqslant C_0\epsilon+C_1\lambda^{\sigma}\nu^2.
\end{align*}
Letting $\nu:=2C_0\epsilon$ such that $\nu\leqslant (2C_1\lambda^{\sigma})^{-1}$, we claim
\begin{align}\label{Crucial-01}
	\|\ml{N}_{\lambda}[\partial_t\psi_{\lambda}]\|_{\ml{B}^{\lambda}_{\nu}}\leqslant \nu.
\end{align} 
We have justified $\ml{N}_{\lambda}[\partial_t\psi_{\lambda}]\in\ml{B}_{\nu}^{\lambda}$. By the similar way as the above, one can derive the following Lipschitz condition:
\begin{align}\label{Crucial-02}
&\|\ml{N}_{\lambda}[\partial_t\psi_{\lambda}]-\ml{N}_{\lambda}[\partial_t\widetilde{\psi}_{\lambda}]\|_{\ml{B}^{\lambda}_{\nu}}\notag\\
&\lesssim\lambda^{-\sigma}\left\|\int_0^t\partial_t^2\ml{K}_2(t-\theta,|D|;\lambda)\big([\partial_t\psi_{\lambda}(\theta,x)]^2-[\partial_t\widetilde{\psi}_{\lambda}(\theta,x)]^2\big)\mathrm{d}\theta\right\|_{\widetilde{L}^2(\mb{R}_+,E_{2,2}^{\alpha,s+\sigma})}\notag\\
&\lesssim\lambda^{-\sigma}\left(\|\partial_t\psi_{\lambda}(\partial_t\psi_{\lambda}-\partial_t\widetilde{\psi}_{\lambda})\|_{\widetilde{L}^1(\mb{R}_+,E_{2,2}^{\alpha,s+\sigma})}+\|\partial_t\widetilde{\psi}_{\lambda}(\partial_t\psi_{\lambda}-\partial_t\widetilde{\psi}_{\lambda})\|_{\widetilde{L}^1(\mb{R}_+,E_{2,2}^{\alpha,s+\sigma})}\right)\notag\\
&\lesssim\lambda^{-\sigma}\left(\|\partial_t\psi_{\lambda}\|_{\ml{X}_{\alpha,s,\sigma}}+\|\partial_t\widetilde{\psi}_{\lambda}\|_{\ml{X}_{\alpha,s,\sigma}}\right)\|\partial_t\psi_{\lambda}-\partial_t\widetilde{\psi}_{\lambda}\|_{\ml{X}_{\alpha,s,\sigma}}\notag\\
&\leqslant C_1\nu\lambda^{\sigma}\|\partial_t\psi_{\lambda}-\partial_t\widetilde{\psi}_{\lambda}\|_{\ml{B}^{\lambda}_{\nu}}\notag\\
&\leqslant\tfrac{1}{2}\|\partial_t\psi_{\lambda}-\partial_t\widetilde{\psi}_{\lambda}\|_{\ml{B}^{\lambda}_{\nu}}
\end{align}
for any $\partial_t\psi_{\lambda},\partial_t\widetilde{\psi}_{\lambda}\in\ml{B}_{\nu}^{\lambda}$ carrying the identical initial data.

Choosing the total size of initial data to be small such that
\begin{align}\label{Smallness}
0<\epsilon \leqslant (4C_0C_1\lambda^{\sigma})^{-1} \ \Leftrightarrow \ 0<\nu\leqslant (2C_1\lambda^{\sigma})^{-1}
\end{align} 
 for an arbitrary chosen constant $\lambda\geqslant 1$, we consequently claim that $\ml{N}_{\lambda}: \ml{B}_{\nu}^{\lambda}\to\ml{B}_{\nu}^{\lambda}$ is a contraction mapping since \eqref{Crucial-01} as well as \eqref{Crucial-02} if the support condition \eqref{Additional-Assumption-Support} and the smallness condition \eqref{Smallness} for the initial data hold, in which we assume $\alpha\leqslant 0$ and $s\geqslant \frac{n}{2}-\sigma$. In other words, there is a unique fixed point $\partial_t\psi_{\lambda}\in\ml{B}_{\nu}^{\lambda}$ carrying its integral form for the scaled JMGT equations \eqref{Eq-Complex-Scaled-JGMT-W} with rough small data, that is
\begin{align}\label{Global-mild-solution}
\partial_t\psi_{\lambda}(t,x)&=\sum\limits_{j\in\{0,1,2\}}\partial_t\ml{K}_j(t,|D|;\lambda)\psi_{j,\lambda}(x)-(1+\tfrac{B}{2A})\partial_t\ml{K}_2(t,|D|;\lambda)[\psi_{1,\lambda}(x)]^2\notag\\
&\quad-(1+\tfrac{B}{2A})\int_0^t\partial_t^2\ml{K}_2(t-\theta,|D|;\lambda)[\partial_t\psi_{\lambda}(\theta,x)]^2\mathrm{d}\theta
\end{align}
in the set $\ml{B}_{\nu}^{\lambda}$.

\subsection{Regularity of global in-time rough small data scaled solution}\label{Sub-Section-Rough-regularity}
\hspace{5mm}Under the same hypothesis \eqref{Additional-Assumption-Support} and \eqref{Smallness} as the last subsection, via Proposition \ref{Prop-Linear-Scaled} as well as Proposition \ref{Prop-Inhomogeneous-Scaled} with $\gamma\in\{1,+\infty\}$, the global in-time solution \eqref{Global-mild-solution} satisfies the following estimate:
\begin{align}\label{Ineq-regularity-01}
&\|\partial_t\psi_{\lambda}\|_{\widetilde{L}^1(\mb{R}_+,E^{\alpha,s+\sigma}_{2,2})\cap \widetilde{L}^{\infty}(\mb{R}_+,E^{\alpha,s+\sigma}_{2,2})}\notag\\
&\lesssim\|\psi_{0,\lambda}\|_{E^{\alpha}_{s+\sigma}}+\|\psi_{1,\lambda}\|_{E^{\alpha}_{s+\sigma}}+\lambda^{\sigma}\|\psi_{2,\lambda}\|_{E^{\alpha}_s}+\lambda^{\sigma}\|\psi_{1,\lambda}^2\|_{E^{\alpha}_s}+\|(\partial_t\psi_{\lambda})^2\|_{\widetilde{L}^1(\mb{R}_+,E_{2,2}^{\alpha,s+\sigma})}\notag\\
&\leqslant\lambda^{\sigma}(C_0\epsilon+C_1\lambda^{\sigma}\nu^2)\leqslant \lambda^{\sigma}\nu\leqslant (2C_1)^{-1},
\end{align}
which verifies the regularity $\partial_t\psi_{\lambda}\in \widetilde{L}^1(\mb{R}_+,E^{\alpha,s+\sigma}_{2,2})\cap \widetilde{L}^{\infty}(\mb{R}_+,E^{\alpha,s+\sigma}_{2,2})$.

Due to the fact that $\mathrm{e}^{-ct}\lesssim 1$ in the statement (c) of Proposition \ref{Prop-Pointwise}, we may estimate
\begin{align*}
\langle\xi\rangle^{s+\sigma}2^{\alpha|\xi|}|\partial_t\widehat{\psi}_{\lambda}^{\lin}(t,\xi)|&\lesssim 2^{\alpha|\xi|}\left(\langle\xi\rangle^{s+\sigma}|\widehat{\psi}_{0,\lambda}(\xi)|+\langle\xi\rangle^{s+\sigma}|\widehat{\psi}_{1,\lambda}(\xi)|+\lambda^{\sigma}\langle\xi\rangle^s|\widehat{\psi}_{2,\lambda}(\xi)|\right),\\
\|\partial_t\psi_{\lambda}^{\lin}(t,\cdot)\|_{E^{\alpha}_{s+\sigma}}&\lesssim\|\psi_{0,\lambda}\|_{E^{\alpha}_{s+\sigma}}+\|\psi_{1,\lambda}\|_{E^{\alpha}_{s+\sigma}}+\lambda^{\sigma}\|\psi_{2,\lambda}\|_{E^{\alpha}_s},
\end{align*}
for $\xi\in\mb{R}^n_{\mathrm{Oct},N_0\lambda}$ with $\lambda\geqslant1$ from \eqref{Additional-Assumption-Support}, which yields $\partial_t\psi_{\lambda}^{\lin}\in\ml{C}(\mb{R}_+,E^{\alpha}_{s+\sigma})$. Analogously,
\begin{align*}
\|\partial_t\ml{K}_2(t,|D|;\lambda)[\psi_{1,\lambda}(\cdot)]^2\|_{E^{\alpha}_{s+\sigma}}\lesssim \lambda^{\sigma}\|\psi_{1,\lambda}^2\|_{E^{\alpha}_s}.
\end{align*}
Concerning any $0\leqslant t_0< t_1$, we may get the following inequality:
\begin{align*}
&\left\|\int_{t_0}^t\partial_t^2\ml{K}_2(t-\theta,|D|;\lambda)[\partial_t\psi_{\lambda}(\theta,x)]^2\mathrm{d}\theta\right\|_{L^{\infty}([t_0,t_1],E^{\alpha}_{s+\sigma})}\\
&\lesssim \left\|\int_{t_0}^t\partial_t^2\ml{K}_2(t-\theta,|D|;\lambda)[\partial_t\psi_{\lambda}(\theta,x)]^2\mathrm{d}\theta\right\|_{\widetilde{L}^{\infty}([t_0,t_1],E^{\alpha,s+\sigma}_{2,2})}\\
&\lesssim \|\partial_t\psi_{\lambda}\|_{\widetilde{L}^2([t_0,t_1],E^{\alpha,s+\sigma}_{2,2})}^2,
\end{align*}
where we applied Proposition \ref{Prop=Algebra-Property} in $[t_0,t_1]$. Because of $\partial_t\psi_{\lambda}\in\widetilde{L}^2(\mb{R}_+,E^{\alpha,s+\sigma}_{2,2})$ proved in the last subsection, letting $t_0\to t_1$, it immediately follows that $\partial_t\psi_{\lambda}\in\ml{C}(\mb{R}_+,E^{\alpha}_{s+\sigma})$.

\subsection{Proof of Theorem \ref{Thm-rough-large-data}}\label{Sub-Section-Rough-large-data}
\hspace{5mm}The desired existence and regularity of global in-time small data Sobolev solution, i.e. $\alpha=0$ as well as $\lambda=1$ so that
\begin{align*}
\partial_t\psi\in \ml{C}(\mb{R}_+,H^{s+\sigma})\cap\widetilde{L}^1(\mb{R}_+,H^{s+\sigma})\cap \widetilde{L}^{\infty}(\mb{R}_+,H^{s+\sigma}),
\end{align*}
for the complex-valued JMGT equations \eqref{Eq-Complex-JGMT-W} have been proved in the last two subsections, where we took the sufficiently small size of initial data $0<\epsilon\ll 1$ such that \eqref{Smallness} always holds. For this reason, in the rest of this part, we will focus on the rough large data solution (i.e. the case $\alpha<0$ with $\lambda>1$) to complete the proof of Theorem \ref{Thm-rough-large-data}.

Recalling our assumption $\mathrm{supp}\,\widehat{\psi}_j\subset\mb{R}^n_{\mathrm{Oct},N_0}$ for $j\in\{0,1,2\}$ in Theorem \ref{Thm-rough-large-data}, employing Proposition \ref{Prop=Scaling-Property-1} with $|\xi|\geqslant |\xi|_{\infty}\geqslant N_0$, one may straightforwardly deduce
\begin{align*}
\epsilon&=\|(\psi_{0,\lambda},\psi_{1,\lambda},\psi_{2,\lambda})\|_{E^{\alpha}_{s+\sigma}\times E^{\alpha}_{s+\sigma}\times E^{\alpha}_s}+\|\psi_{1,\lambda}^2\|_{E^{\alpha}_s}\\
&\leqslant C_2\lambda^{-\frac{n}{2}+s+\sigma}2^{\alpha(\lambda-1)N_0}\left(\|(\psi_{0},\psi_{1},\psi_{2})\|_{E^{\alpha}_{s+\sigma}\times E^{\alpha}_{s+\sigma}\times E^{\alpha}_s}+\|\psi_1^2\|_{E^{\alpha}_s}\right)
\end{align*}
with a suitable constant $C_2>0$ independent of $\lambda>1$. Setting $\alpha<0$ as well as $\lambda>1$, there exists a constant 
\begin{align*}
\lambda=\lambda\left(\alpha,s,\sigma,n,N_0,\|(\psi_{0},\psi_{1},\psi_{2})\|_{E^{\alpha}_{s+\sigma}\times E^{\alpha}_{s+\sigma}\times E^{\alpha}_s},\|\psi_{1}^2\|_{E^{\alpha}_s}\right)
\end{align*}
such that
\begin{align*}
\underbrace{\lambda^{-\frac{n}{2}+s+2\sigma}2^{\alpha(\lambda-1)N_0}}_{\begin{subarray}{c}\mbox{an exponential decay}\\ \mbox{with respect to $\lambda\gg1$}\end{subarray}}\leqslant \underbrace{(4C_0C_1C_2)^{-1}\left(\|(\psi_{0},\psi_{1},\psi_{2})\|_{E^{\alpha}_{s+\sigma}\times E^{\alpha}_{s+\sigma}\times E^{\alpha}_s}+\|\psi_{1}^2\|_{E^{\alpha}_s}\right)^{-1}}_{\mbox{a fixed constant independent of $\lambda$}}=:C_{\mathrm{data}}
\end{align*}
for the given data $(\psi_{0},\psi_{1},\psi_{2})\in E^{\alpha}_{s+\sigma}\times E^{\alpha}_{s+\sigma}\times E^{\alpha}_s$ and $\psi_1^2\in E^{\alpha}_s$. For example, when $\alpha\leqslant-\frac{2(s+2\sigma)}{N_0}$, we may take
\begin{align}\label{Example-lambda}
\lambda=\left|\frac{2}{\alpha N_0}\log_2\left(2^{\alpha N_0}C_{\mathrm{data}}\right)\right|+1
\end{align}
to deduce $\lambda\leqslant 2^{\lambda}\leqslant 2^{-\frac{\alpha N_0}{2(s+2\sigma)}\lambda}$ and
\begin{align*}
\lambda^{-\frac{n}{2}+s+2\sigma}2^{\alpha(\lambda-1)N_0}&\leqslant 2^{-\alpha N_0}(\lambda2^{\frac{\alpha N_0}{s+2\sigma}\lambda})^{s+2\sigma}\leqslant 2^{-\alpha N_0+\frac{\alpha N_0}{2}\lambda}\leqslant C_{\mathrm{data}}.
\end{align*}
 It ensures the smallness condition \eqref{Smallness} even for large initial data by taking a suitably large parameter $\lambda$. Moreover, our support condition on the initial data in Theorem \ref{Thm-rough-large-data} shows
\begin{align*}
\mathrm{supp}\,\widehat{\psi}_j\subset\mb{R}^n_{\mathrm{Oct},N_0}\ \Rightarrow\ \mathrm{supp}\,\widehat{\psi}_{j,\lambda}\subset\mb{R}^n_{\mathrm{Oct},N_0\lambda}\ \mbox{for} \ j\in\{0,1,2\},
\end{align*}
via the scaling property of Fourier transform, namely, the condition \eqref{Additional-Assumption-Support} is guaranteed.

Summarizing the above statements, concerning the complex-valued JMGT equations \eqref{Eq-Complex-JGMT-W} even with rough large initial data whose Fourier transforms are supported in $\mb{R}^n_{\mathrm{Oct},N_0}$, according to the ansatz $\partial_t\psi_{\lambda}(t,x)=\partial_t\psi(t,\lambda x)$ by choosing a suitable constant $\lambda>1$, the hypotheses \eqref{Additional-Assumption-Support} and \eqref{Smallness} hold so that the scaled solution $\partial_t\psi_{\lambda}\in\ml{X}_{\alpha,s,\sigma}$ uniquely exists and satisfies \eqref{Global-mild-solution}. That is to say, $\partial_t\psi(t,\lambda x)\in \widetilde{L}^2(\mb{R}_+,E_{2,2}^{\alpha,s+\sigma})$ with $s\geqslant \frac{n}{2}-\sigma$.

Let us eventually consider the regularities of $\partial_t\psi$ by those of $\partial_t\psi_{\lambda}$ obtained in Subsection \ref{Sub-Section-Rough-regularity}. Applying $\partial_t\psi(t,x)\equiv\partial_t\psi_{\lambda}(t,\lambda^{-1}x)$ in Proposition \ref{Prop=Scaling-Property-2} and \eqref{Ineq-regularity-01}, we may arrive at
\begin{align*}
\|\partial_t\psi\|_{\widetilde{L}^1(\mb{R}_+,E^{\lambda\alpha,s+\sigma}_{2,2})\cap \widetilde{L}^{\infty}(\mb{R}_+,E^{\lambda\alpha,s+\sigma}_{2,2})}&\leqslant 2^{(-\alpha)c\lambda} \|\partial_t\psi_{\lambda}\|_{\widetilde{L}^1(\mb{R}_+,E^{\alpha,s+\sigma}_{2,2})\cap \widetilde{L}^{\infty}(\mb{R}_+,E^{\alpha,s+\sigma}_{2,2})}\\
&\leqslant (2C_1)^{-1}2^{(-\alpha)c\lambda}<+\infty.
\end{align*}
For another, via the Plancherel identity and the change of variable $\lambda\xi\mapsto \xi$, one derives
\begin{align*}
\|\partial_t\psi(t,\cdot)\|_{E^{\lambda\alpha}_{s+\sigma}}
&=\lambda^{\frac{n}{2}}\|\langle\lambda^{-1}\xi\rangle^{s+\sigma}2^{\alpha|\xi|}\partial_t\widehat{\psi}_{\lambda}(t,\xi)\|_{L^2}\\
&\leqslant \lambda^{\frac{n}{2}}\|\partial_t\psi_{\lambda}(t,\cdot)\|_{E^{\alpha}_{s+\sigma}}<+\infty.
\end{align*}
Taking $\alpha_0:=\lambda\alpha$ with a fixed $\lambda>1$, we complete the proof of Theorem \ref{Thm-rough-large-data}.

\section{Global in-time existence of regular small data solution}\setcounter{equation}{0}\label{Section-Global-Small-Data}
\hspace{5mm}This section is organized as follows. As our preparations for investigating nonlinear problems, we in Subsection \ref{Sub-Section-Linear} derive polynomially decay estimates in the $L^2$ framework of solution for the linear MGT equations via the WKB analysis and the Fourier analysis. Our philosophy and main tools of proof are stated in Subsection \ref{Subs-Section-Philosphy}. By estimating the initial data in Subsection \ref{Sub-Section-Est-Data} and the nonlinear terms in Subsection \ref{Sub-Section-Est-Nonlinear-Terms} via the fractional Gagliardo-Nirenberg inequality, the fractional Leibniz rule and the fractional Sobolev embedding, we complete the proof of Theorem \ref{Thm-regular-small-data} in a suitable time-weighted Sobolev space lastly in Subsection \ref{Sub-Section-Proof-Regular-Small-Solution}.

\subsection{Linear MGT equations}\label{Sub-Section-Linear}
\hspace{5mm}The strongly coupled systems \eqref{Eq-System-JMGT} are formulated via the nonlinear coupled structure on the right-hand sides to the identical linear parts for $w=w(t,x)\in\mb{R}$ as follows:
\begin{align}\label{Eq-Linearized-MGT}
\begin{cases}
\tau\partial_t^3w+\partial_t^2w+\ml{A}w+(\delta+\tau)\ml{A}\partial_tw=0,&x\in\mb{R}^n,\ t\in\mb{R}_+,\\
w(0,x)=w_0(x),\ \partial_tw(0,x)=w_1(x),\ \partial_t^2w(0,x)=w_2(x),&x\in\mb{R}^n,
\end{cases}
\end{align}
in which we took $w\in\{u,v\}$.
For this reason, we subsequently study sharp polynomially decay estimates of $\partial_tw(t,\cdot)$ in the $\dot{H}^{s+\sigma}$ norm with/without the additional $L^m$ integrability ($m\in[1,2)$ in general) of initial data.

By using the partial Fourier transform with respect to the spatial variable ($x\mapsto\xi$) as usual for the last Cauchy problem, one immediately obtains
\begin{align}\label{Eq-01}
\begin{cases}
\tau\mathrm{d}_t^3\widehat{w}+\mathrm{d}_t^2\widehat{w}+|\xi|^{2\sigma}\widehat{w}+(\delta+\tau)|\xi|^{2\sigma}\mathrm{d}_t\widehat{w}=0,&\xi\in\mb{R}^n,\ t\in\mb{R}_+,\\
\widehat{w}(0,\xi)=\widehat{w}_0(\xi),\ \mathrm{d}_t\widehat{w}(0,\xi)=\widehat{w}_1(\xi),\ \mathrm{d}^2_t\widehat{w}(0,\xi)=\widehat{w}_2(\xi),&\xi\in\mb{R}^n.
\end{cases}
\end{align}
Its corresponding characteristic cubic equation \eqref{Characteristic-roots} equipping $|\eta|=|\xi|$ has the following pairwise distinct roots (taking $\lambda=1$ in Proposition \ref{Prop-Pointwise}):
\begin{align*}
\mu_1=-\tfrac{1}{\delta+\tau}+O(|\xi|^{-\sigma}),\ \mu_{2,3}=\pm i\sqrt{\tfrac{\delta+\tau}{\tau}}|\xi|^{\sigma}-\tfrac{\delta}{2\tau(\delta+\tau)}+O(|\xi|^{-\sigma})\ \mbox{for}\ |\xi|\geqslant N_0,
\end{align*}
and (via asymptotic expansions associated with the discriminant $\triangle_{\mathrm{Disc}}(|\xi|)=-4|\xi|^{2\sigma}+O(|\xi|^{4\sigma})<0$ as $|\xi|\leqslant \varepsilon_0$, analogously to \cite{Chen-Ikehata=2021,Chen-Takeda=2023}) as follows:
\begin{align*}
\mu_1=-\tfrac{1}{\tau}+O(|\xi|^{2\sigma}),\ \mu_{2,3}=\pm i|\xi|^{\sigma}-\tfrac{\delta}{2}|\xi|^{2\sigma}+O(|\xi|^{3\sigma})\ \mbox{for}\ |\xi|\leqslant\varepsilon_0.
\end{align*}
Moreover, they satisfy $\mathrm{Re}\,\mu_j<0$ for $j\in\{1,2,3\}$ as $\varepsilon_0\leqslant|\xi|\leqslant N_0$ due to the continuity of characteristic roots with respect to $|\xi|$. The solution (in the time-derivative sense) for the nonlocal MGT equations \eqref{Eq-Linearized-MGT} is expressed by
\begin{align*}
\partial_tw(t,x)=\sum\limits_{j\in\{0,1,2\}}\partial_t\ml{K}_j(t,|D|)w_j(x),
\end{align*}
in which the solution's kernels $\ml{K}_j(t,|D|)$ have their symbols $\widehat{\ml{K}}_j(t,|\xi|)$ that can be obtained by solving the $|\xi|$-dependent ODEs \eqref{Eq-01} explicitly.

\begin{prop}\label{Prop-L2-Lm-Real}
Let $s\in\{-\sigma\}\cup[0,+\infty)$ and $m\in[1,2)$. Then, the solution (in the time-derivative sense) for the MGT equations \eqref{Eq-Linearized-MGT} satisfies
\begin{align*}
\|\partial_tw(t,\cdot)\|_{\dot{H}^{s+\sigma}}&\lesssim (1+t)^{-\frac{n(2-m)}{4m\sigma}-\frac{s+\sigma}{2\sigma}}\|(w_0,w_1,w_2)\|_{(\dot{H}^{s+\sigma}\cap L^m)\times (\dot{H}^{s+\sigma}\cap L^m)\times (\dot{H}^{\max\{s,0\}}\cap L^m)}.
\end{align*}
Remark that the last estimate also holds for inhomogeneous Sobolev data with the additional $L^m$ integrability.
\end{prop}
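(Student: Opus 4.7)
I would prove Proposition \ref{Prop-L2-Lm-Real} by combining pointwise multiplier estimates in the Fourier space with an $L^m$-$L^2$ interpolation technique. After passing to $\partial_t \widehat{w}(t,\xi) = \sum_{j\in\{0,1,2\}} \partial_t\widehat{\mathcal{K}}_j(t,|\xi|) \widehat{w}_j(\xi)$, I would split the frequency space into the low zone $Z_{\mathrm{low}} := \{|\xi|\leq \varepsilon_0\}$, the middle zone $Z_{\mathrm{mid}} := \{\varepsilon_0 \leq |\xi| \leq N_0\}$, and the high zone $Z_{\mathrm{high}} := \{|\xi| \geq N_0\}$. The high-frequency pointwise bounds are already furnished by Proposition \ref{Prop-Pointwise} with $\lambda = 1$, namely $|\partial_t\widehat{\mathcal{K}}_0| + |\partial_t\widehat{\mathcal{K}}_1| \lesssim e^{-ct}$ and $|\partial_t\widehat{\mathcal{K}}_2| \lesssim |\xi|^{-\sigma} e^{-ct}$; on $Z_{\mathrm{mid}}$, continuity of the roots $\mu_j$ together with $\mathrm{Re}\,\mu_j < 0$ (noted in the paragraph preceding the claim) yields $|\partial_t \widehat{\mathcal{K}}_j| \lesssim e^{-ct}$ uniformly for all $j$.

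\textbf{Low-frequency multipliers.} On $Z_{\mathrm{low}}$ I would insert the asymptotic expansions $\mu_1 = -1/\tau + O(|\xi|^{2\sigma})$, $\mu_{2,3} = \pm i|\xi|^{\sigma} - (\delta/2)|\xi|^{2\sigma} + O(|\xi|^{3\sigma})$ into the Vandermonde-Cramer representation $\widehat{\mathcal{K}}_j(t,|\xi|) = \sum_{k=1}^3 c_{j,k}(\xi) e^{\mu_k t}$, whose coefficients are quotients with denominator $\prod_{i<k}(\mu_k - \mu_i) \sim -(2i/\tau^2)|\xi|^{\sigma}$. A direct computation of the leading orders gives
\begin{align*}
|\partial_t\widehat{\mathcal{K}}_0(t,|\xi|)| &\lesssim |\xi|^{\sigma} e^{-c|\xi|^{2\sigma}t} + |\xi|^{2\sigma} e^{-ct},\\
|\partial_t\widehat{\mathcal{K}}_1(t,|\xi|)| &\lesssim e^{-c|\xi|^{2\sigma}t} + |\xi|^{2\sigma} e^{-ct},\\
|\partial_t\widehat{\mathcal{K}}_2(t,|\xi|)| &\lesssim e^{-c|\xi|^{2\sigma}t} + e^{-ct}.
\end{align*}
The crucial point is that although each individual coefficient $c_{j,k}(\xi) \sim |\xi|^{-\sigma}$ is singular for $j \in \{1,2\}$ and $k \in \{2,3\}$, the time derivative furnishes a compensating factor $\mu_k \sim |\xi|^{\sigma}$, so the total symbols remain bounded as $|\xi|\to 0$.

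\textbf{Assembly and main obstacle.} On $Z_{\mathrm{low}}$, I would apply Hölder's inequality with $1/2 = 1/r + 1/m'$ (so $r = 2m/(2-m)$) combined with the Hausdorff-Young inequality $\|\widehat{w}_j\|_{L^{m'}} \lesssim \|w_j\|_{L^m}$ to obtain
\begin{align*}
\bigl\||\xi|^{s+\sigma} \partial_t\widehat{\mathcal{K}}_j \widehat{w}_j \bigr\|_{L^2(Z_{\mathrm{low}})} \lesssim \bigl\||\xi|^{s+\sigma} \partial_t\widehat{\mathcal{K}}_j\bigr\|_{L^r(Z_{\mathrm{low}})} \|w_j\|_{L^m};
\end{align*}
the rescaling $\zeta = t^{1/(2\sigma)}\xi$ then produces $\||\xi|^{s+\sigma} e^{-c|\xi|^{2\sigma}t}\|_{L^r} \lesssim (1+t)^{-(s+\sigma)/(2\sigma) - n(2-m)/(4m\sigma)}$, exactly the rate claimed (integrability at the origin is automatic since $s+\sigma \geq 0$, the additional $|\xi|^{\sigma}$ factor for $j=0$ only improves the rate, and the $|\xi|^{2\sigma} e^{-ct}$ contributions decay exponentially). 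On $Z_{\mathrm{mid}} \cup Z_{\mathrm{high}}$, the exponential-in-$t$ decay of the multipliers combined with Plancherel gives contributions controlled by $\|w_{0,1}\|_{\dot{H}^{s+\sigma}}$ and $\|w_2\|_{\dot{H}^{\max\{s,0\}}}$, using the $|\xi|^{-\sigma}$ factor of $\partial_t\widehat{\mathcal{K}}_2$ on $Z_{\mathrm{high}}$ to absorb one $\sigma$ of derivative when $s \geq 0$, and the boundedness of $|\xi|^{-\sigma}$ on $\{|\xi|\geq\varepsilon_0\}$ when $s = -\sigma$. The inhomogeneous-data remark is immediate from $\|\cdot\|_{\dot{H}^{s+\sigma}} \leq \|\cdot\|_{H^{s+\sigma}}$. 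The principal obstacle lies in the middle step: extracting the pointwise low-frequency bounds requires carefully tracking the cancellation between the singular Vandermonde coefficients and the factors $\mu_k$, and it is precisely this cancellation that produces the derivative asymmetry between $w_{0,1}$ and $w_2$ that appears in the initial-data norm of the statement.
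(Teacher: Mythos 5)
Your proposal is correct and follows essentially the same route as the paper: the same low/middle/high frequency splitting, the identical low-frequency pointwise kernel bounds obtained by inserting the root asymptotics into the explicit three-root representation (the cancellation you highlight is exactly what the paper's computation exploits), the same H\"older plus Hausdorff--Young plus rescaling argument on the low zone, and the same exponential-decay treatment of the remaining frequencies via Proposition \ref{Prop-Pointwise} and $\mathrm{Re}\,\mu_j<0$. Your handling of the $|\xi|^{-\sigma}$ gain for $\widehat{\ml{K}}_2$ at high frequencies, distinguishing $s\geqslant 0$ from $s=-\sigma$, in fact spells out the $\dot{H}^{\max\{s,0\}}$ norm in the statement slightly more explicitly than the paper's displayed estimate, but the argument is the same.
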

\begin{proof}
To begin with the proof, we concentrate on the small frequencies $|\xi|\leqslant \varepsilon_0$, which allows to represent the solution (cf. the time-derivative of \cite[Equation (23)]{Chen-Takeda=2023}) as follows:
\begin{align*}
	\partial_t\widehat{w}&=\frac{-(\mu_{\mathrm{I}}^2+\mu_{\mathrm{R}}^2)\widehat{w}_0+2\mu_{\mathrm{R}}\widehat{w}_1-\widehat{w}_2}{2\mu_{\mathrm{R}}\mu_1-\mu_{\mathrm{I}}^2-\mu_{\mathrm{R}}^2-\mu_1^2}\mu_1\mathrm{e}^{\mu_1t}\\
	&\quad+\frac{(2\mu_{\mathrm{R}}\mu_1-\mu_1^2)\widehat{w}_0-2\mu_{\mathrm{R}}\widehat{w}_1+\widehat{w}_2}{2\mu_{\mathrm{R}}\mu_1-\mu_{\mathrm{I}}^2-\mu_{\mathrm{R}}^2-\mu_1^2}\big(\cos(\mu_{\mathrm{I}}t)\mu_{\mathrm{R}}-\sin(\mu_{\mathrm{I}}t)\mu_{\mathrm{I}}\big)\mathrm{e}^{\mu_{\mathrm{R}}t}\\
	&\quad+\frac{\mu_1(\mu_{\mathrm{R}}\mu_1+\mu_{\mathrm{I}}^2-\mu_{\mathrm{R}}^2)\widehat{w}_0+(\mu_{\mathrm{R}}^2-\mu_{\mathrm{I}}^2-\mu_1^2)\widehat{w}_1-(\mu_{\mathrm{R}}-\mu_1)\widehat{w}_2}{\mu_{\mathrm{I}}(2\mu_{\mathrm{R}}\mu_1-\mu_{\mathrm{I}}^2-\mu_{\mathrm{R}}^2-\mu_1^2)}\big(\sin(\mu_{\mathrm{I}}t)\mu_{\mathrm{R}}+\cos(\mu_{\mathrm{I}}t)\mu_{\mathrm{I}}\big)\mathrm{e}^{\mu_{\mathrm{R}}t},
\end{align*}
where we put $\mu_1=-\frac{1}{\tau}+O(|\xi|^{2\sigma})$, $\mu_{\mathrm{R}}=-\frac{\delta}{2}|\xi|^{2\sigma}+O(|\xi|^{4\sigma})$ and $\mu_{\mathrm{I}}=|\xi|^{\sigma}+O(|\xi|^{3\sigma})$. Then, let us plug these asymptotic expansions into the above expression. By carrying out straightforward but tedious computations, the solution's kernels in the Fourier space satisfy the next pointwise estimates:
\begin{align*}
|\partial_t\widehat{\ml{K}}_0(t,|\xi|)|&\lesssim|\xi|^{\sigma}\mathrm{e}^{-c|\xi|^{2\sigma}t}+|\xi|^{2\sigma}\mathrm{e}^{-ct},\\
|\partial_t\widehat{\ml{K}}_1(t,|\xi|)|&\lesssim\mathrm{e}^{-c|\xi|^{2\sigma}t}+|\xi|^{2\sigma}\mathrm{e}^{-ct},\\
|\partial_t\widehat{\ml{K}}_2(t,|\xi|)|&\lesssim\mathrm{e}^{-c|\xi|^{2\sigma}t}+\mathrm{e}^{-ct},
\end{align*}
for any $t\geqslant 0$ and $|\xi|\leqslant \varepsilon_0$. Next, by using the H\"older inequality with $\frac{1}{2}=\frac{2-m}{2m}+\frac{1}{m'}$ and the Hausdorff-Young inequality from $L^{m'}_{\xi}$ to $L^m_x$ for $m\in[1,2)$, we are able to derive
\begin{align*}
\|\chi_{[0,\varepsilon_0]^n}\partial_tw(t,\cdot)\|_{\dot{H}^{s+\sigma}}&=\|\chi_{[0,\varepsilon_0]^n}|\xi|^{s+\sigma}\partial_t\widehat{w}(t,\xi)\|_{L^2}\\
&\lesssim\|\chi_{[0,\varepsilon_0]^n}|\xi|^{s+\sigma}(\mathrm{e}^{-c|\xi|^{2\sigma}t}+\mathrm{e}^{-ct})\|_{L^{\frac{2m}{2-m}}}\|(\widehat{w}_0,\widehat{w}_1,\widehat{w}_2)\|_{L^{m'}\times L^{m'}\times L^{m'}}\\
&\lesssim\left(\int_0^{\varepsilon_0}r^{\frac{2m(s+\sigma)}{2-m}+n-1}\mathrm{e}^{-\frac{2mc}{2-m}r^{2\sigma}t}\mathrm{d}r\right)^{\frac{2-m}{2m}}\|(w_0,w_1,w_2)\|_{L^m\times L^m\times L^m}\\
&\lesssim (1+t)^{-\frac{n(2-m)}{4m\sigma}-\frac{s+\sigma}{2\sigma}}\|(w_0,w_1,w_2)\|_{L^m\times L^m\times L^m}.
\end{align*}
Note that $s\in\{-\sigma\}\cup[0,+\infty)$, implying $\frac{2m(s+\sigma)}{2-m}+n-1\geqslant 0$, surely guarantees
\begin{align*}
\int_0^{\varepsilon_0}r^{\frac{2m(s+\sigma)}{2-m}+n-1}\mathrm{e}^{-\tilde{c}r^{2\sigma}t}\mathrm{d}r\lesssim\begin{cases}
1&\mbox{if}\ t\leqslant t_0,\\
t^{-\frac{1}{2\sigma}(\frac{2m(s+\sigma)}{2-m}+n)}\int_0^{+\infty}\theta^{\frac{2m(s+\sigma)}{2-m}+n-1}\mathrm{e}^{-\tilde{c}\theta^{2\sigma}}\mathrm{d}\theta\lesssim t^{-\frac{1}{2\sigma}(\frac{2m(s+\sigma)}{2-m}+n)}&\mbox{if}\ t\geqslant t_0, 
\end{cases}
\end{align*}
via the new variable $\theta=rt^{\frac{1}{2\sigma}}$.
Secondly, from the statement (c) of Proposition \ref{Prop-Pointwise} with $\lambda=1$ for $|\xi|\geqslant N_0$, and
\begin{align*}
|\partial_t\widehat{\ml{K}}_0(t,|\xi|)|+|\partial_t\widehat{\ml{K}}_1(t,|\xi|)|+|\partial_t\widehat{\ml{K}}_2(t,|\xi|)|\lesssim\mathrm{e}^{-ct}\ \mbox{for}\ \varepsilon_0\leqslant |\xi|\leqslant N_0,
\end{align*}
we may easily obtain
\begin{align*}
\|(1-\chi_{[0,\varepsilon_0]^n})\partial_tw(t,\cdot)\|_{\dot{H}^{s+\sigma}}\lesssim\mathrm{e}^{-ct}\|(w_0,w_1,w_2)\|_{\dot{H}^{s+\sigma}\times \dot{H}^{s+\sigma}\times \dot{H}^s}.
\end{align*}
Due to the localization $|\xi|>\varepsilon_0$ leading to $|\xi|\approx\langle\xi\rangle$, the previous estimate still holds for inhomogeneous Sobolev data.
The summary of last estimates completes our proof.
\end{proof}

\begin{prop}\label{Prop-L2-inhomogeneous}
Let $s\in\{-\sigma\}\cup[0,+\infty)$ and $m\in[1,2)$. Then, the inhomogeneous part (in the second-order time-derivative sense) for the MGT equations \eqref{Eq-Linearized-MGT} satisfies
\begin{align*}
\|\partial_t^2\ml{K}_2(t,|D|)g_0(\cdot)\|_{\dot{H}^{s+\sigma}}&\lesssim (1+t)^{-\frac{1}{2}}\|g_0\|_{\dot{H}^{s+\sigma}},\\
	\|\partial_t^2\ml{K}_2(t,|D|)g_0(\cdot)\|_{\dot{H}^{s+\sigma}}&\lesssim (1+t)^{-\frac{n(2-m)}{4m\sigma}-\frac{s+2\sigma}{2\sigma}}\|g_0\|_{\dot{H}^{s+\sigma}\cap L^m}.
\end{align*}
\end{prop}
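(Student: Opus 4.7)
The plan is to mirror the three-zone frequency decomposition used in the proof of Proposition~\ref{Prop-L2-Lm-Real}: the low zone $\{|\xi|\leqslant\varepsilon_0\}$, the intermediate zone $\{\varepsilon_0\leqslant|\xi|\leqslant N_0\}$, and the high zone $\{|\xi|\geqslant N_0\}$, with Plancherel throughout. The first task is to obtain a pointwise estimate on $|\partial_t^2\widehat{\ml{K}}_2(t,|\xi|)|$. Differentiating once more the explicit representation of $\partial_t\widehat{w}$ used in the proof of Proposition~\ref{Prop-L2-Lm-Real} (specialized to $(\widehat{w}_0,\widehat{w}_1,\widehat{w}_2)=(0,0,1)$) multiplies each exponential mode by the corresponding characteristic root; inserting the small-frequency expansions $\mu_1=-1/\tau+O(|\xi|^{2\sigma})$, $\mu_{\mathrm{R}}=-\tfrac{\delta}{2}|\xi|^{2\sigma}+O(|\xi|^{4\sigma})$ and $\mu_{\mathrm{I}}=|\xi|^{\sigma}+O(|\xi|^{3\sigma})$, and tracking the Vandermonde coefficients $A_{2,3}\sim|\xi|^{-\sigma}$ that multiply the two conjugate oscillatory modes, one obtains
\begin{align*}
|\partial_t^2\widehat{\ml{K}}_2(t,|\xi|)|\lesssim |\xi|^{\sigma}\mathrm{e}^{-c|\xi|^{2\sigma}t}+\mathrm{e}^{-ct}, \qquad |\xi|\leqslant\varepsilon_0.
\end{align*}
On $|\xi|\geqslant N_0$, the analogous computation with $A_{2,3}\sim|\xi|^{-2\sigma}$ (in the spirit of Proposition~\ref{Prop-Pointwise} at $\lambda=1$) yields $|\partial_t^2\widehat{\ml{K}}_2|\lesssim \mathrm{e}^{-ct}$, and on the intermediate zone continuity of the roots together with $\mathrm{Re}\,\mu_j<0$ again gives $\mathrm{e}^{-ct}$.

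For the first claimed inequality, I would bound the low-zone multiplier uniformly in $\xi$ via
\begin{align*}
\sup_{|\xi|\leqslant\varepsilon_0}|\xi|^{2\sigma}\mathrm{e}^{-2c|\xi|^{2\sigma}t}\lesssim (1+t)^{-1}
\end{align*}
(from $u\mathrm{e}^{-2cu}\lesssim 1$ with $u=|\xi|^{2\sigma}t$, together with the trivial $t\leqslant 1$ case where $|\xi|\leqslant\varepsilon_0$ keeps the factor bounded). Plancherel then yields
\begin{align*}
\|\chi_{[0,\varepsilon_0]^n}\partial_t^2\ml{K}_2(t,|D|)g_0\|_{\dot{H}^{s+\sigma}}^2\lesssim (1+t)^{-1}\|g_0\|_{\dot{H}^{s+\sigma}}^2,
\end{align*}
while the intermediate and high zones contribute $\mathrm{e}^{-ct}\|g_0\|_{\dot{H}^{s+\sigma}}$ directly. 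Summing produces the $(1+t)^{-1/2}$ bound.

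For the second inequality, on the low zone I would run exactly the same H\"older step with $\tfrac{1}{2}=\tfrac{2-m}{2m}+\tfrac{1}{m'}$ followed by the Hausdorff--Young inequality from $L^{m'}_\xi$ to $L^m_x$ as in Proposition~\ref{Prop-L2-Lm-Real}, but now with the multiplier $|\xi|^{s+2\sigma}\mathrm{e}^{-c|\xi|^{2\sigma}t}+|\xi|^{s+\sigma}\mathrm{e}^{-ct}$ that carries an extra factor $|\xi|^{\sigma}$ compared to the kernels in Proposition~\ref{Prop-L2-Lm-Real}. Polar coordinates and the substitution $\theta=rt^{1/(2\sigma)}$ give
\begin{align*}
\left(\int_0^{\varepsilon_0}r^{\frac{2m(s+2\sigma)}{2-m}+n-1}\mathrm{e}^{-\frac{2mc}{2-m}r^{2\sigma}t}\,\mathrm{d}r\right)^{\frac{2-m}{2m}}\lesssim (1+t)^{-\frac{n(2-m)}{4m\sigma}-\frac{s+2\sigma}{2\sigma}},
\end{align*}
with non-negativity of the $r$-exponent guaranteed by $s+2\sigma\geqslant\sigma>0$ under $s\in\{-\sigma\}\cup[0,+\infty)$. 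The intermediate and high zones again deliver an $\mathrm{e}^{-ct}\|g_0\|_{\dot{H}^{s+\sigma}}$ term, absorbed on the right-hand side.

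The main technical point is securing the pointwise bound $|\partial_t^2\widehat{\ml{K}}_2|\lesssim |\xi|^{\sigma}\mathrm{e}^{-c|\xi|^{2\sigma}t}+\mathrm{e}^{-ct}$ in the low zone: the prefactor $|\xi|^{\sigma}$ arises from $A_{2,3}\mu_{2,3}^2\sim |\xi|^{-\sigma}\cdot |\xi|^{2\sigma}=|\xi|^{\sigma}$, and it is exactly what produces both the $(1+t)^{-1/2}$ decay in the first estimate and the $(s+2\sigma)/(2\sigma)$ exponent in the second. Everything else reduces to the same Plancherel, H\"older and Hausdorff--Young manipulations already deployed in the proof of Proposition~\ref{Prop-L2-Lm-Real}.
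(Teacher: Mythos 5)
Your proposal is correct and follows essentially the same route as the paper: the same low-frequency pointwise bound $|\partial_t^2\widehat{\ml{K}}_2(t,|\xi|)|\lesssim|\xi|^{\sigma}\mathrm{e}^{-c|\xi|^{2\sigma}t}+\mathrm{e}^{-ct}$, the same uniform-in-$\xi$ bound yielding $(1+t)^{-1/2}$ for the first estimate, and the same H\"older plus Hausdorff--Young computation (with the extra $|\xi|^{\sigma}$ shifting the exponent to $\frac{s+2\sigma}{2\sigma}$) for the second, with exponentially decaying contributions from the intermediate and high zones exactly as in Proposition \ref{Prop-L2-Lm-Real}. The only difference is presentational: you sketch how the pointwise kernel bound arises from the root expansions and the $|\xi|^{-\sigma}$ coefficients, whereas the paper simply states it.
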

\begin{proof}
Thanks to the pointwise estimate
\begin{align*}
|\partial_t^2\widehat{\ml{K}}_2(t,|\xi|)|\lesssim|\xi|^{\sigma}\mathrm{e}^{-c|\xi|^{2\sigma}t}+\mathrm{e}^{-ct}
\end{align*}
for any $t\geqslant0$ and $|\xi|\leqslant\varepsilon_0$, we may follow the proof of Proposition \ref{Prop-L2-Lm-Real} to finish this one. Precisely,
\begin{align*}
\|\partial_t^2\ml{K}_2(t,|D|)g_0(\cdot)\|_{\dot{H}^{s+\sigma}}&\lesssim\|\partial_t^2\widehat{\ml{K}}_2(t,|\xi|)\|_{L^{\infty}}\|\,|\xi|^{s+\sigma}\widehat{g}_0\|_{L^2}\\
&\lesssim\left((1+t)^{-\frac{1}{2}}+\mathrm{e}^{-ct}\right)\|g_0\|_{\dot{H}^{s+\sigma}},
\end{align*}
moreover,
\begin{align*}
\|\partial_t^2\ml{K}_2(t,|D|)g_0(\cdot)\|_{\dot{H}^{s+\sigma}}&\lesssim \|\chi_{[0,\varepsilon_0]^n}|\xi|^{s+\sigma}(|\xi|^{\sigma}\mathrm{e}^{-c|\xi|^{2\sigma}t}+\mathrm{e}^{-ct})\|_{L^{\frac{2m}{2-m}}}\|g_0\|_{L^m}+\mathrm{e}^{-ct}\|g_0\|_{\dot{H}^{s+\sigma}}\\
&\lesssim (1+t)^{-\frac{n(2-m)}{4m\sigma}-\frac{s+2\sigma}{2\sigma}}\|g_0\|_{L^m}+\mathrm{e}^{-ct}\|g_0\|_{\dot{H}^{s+\sigma}},
\end{align*}
are considered.
\end{proof}

\begin{remark}
Let $\sigma=1$ and $m=1$. One may notice that Proposition \ref{Prop-L2-Lm-Real} coincides with \cite[Estimates (14) and (17) when $\ell=1$]{Chen-Takeda=2023}; Proposition \ref{Prop-L2-inhomogeneous} coincides with \cite[Estimates (18) and (19) when $\ell=1$]{Chen-Takeda=2023}.
\end{remark}

\subsection{Philosophy of our proof for regular small data}\label{Subs-Section-Philosphy}
\hspace{5mm}Let us construct the time-weighted Sobolev space of solution $\ml{V}=\ml{V}(t,x)$ in the vector sense $\ml{V}:=(\partial_tu,\partial_tv)^{\mathrm{T}}$ by
\begin{align*}
\ml{Y}^{s}(T):=\ml{Y}_1^{s}(T)\times\ml{Y}_2^{s}(T)\ \mbox{with}\ \ml{Y}_j^{s}(T):=\ml{C}([0,T],H^{s+\sigma})
\end{align*}
for any time $T>0$ equipped the corresponding norms
\begin{align*}
\|\partial_tw\|_{\ml{Y}_j^{s}(T)}:=\sup\limits_{t\in[0,T]}\left( (1+t)^{\frac{n(2-m_j)}{4m_j\sigma}}\|\partial_tw(t,\cdot)\|_{L^2}+(1+t)^{\frac{n(2-m_j)}{4m_j\sigma}+\frac{s+\sigma}{2\sigma}}\|\partial_tw(t,\cdot)\|_{\dot{H}^{s+\sigma}}\right)
\end{align*}
for $w\in\{u,v\}$ and $j\in\{1,2\}$. The above time-dependent  polynomial growth coefficients are strongly motivated by Proposition \ref{Prop-L2-Lm-Real}.

Similarly to Subsection \ref{Sub-Section-Rough-Small-Data}, one may define the following nonlinear integral operator:
\begin{align*}
\ml{N}:\ml{V}\in\ml{Y}^{s}(T)\to\ml{N}[\ml{V}]:= (\partial_tu^{\lin}+\partial_tu^{\nlin},\partial_tv^{\lin}+\partial_tv^{\nlin})^{\mathrm{T}},
\end{align*}
in which $\partial_tw$ with $w\in\{u^{\lin},v^{\lin}\}$ is the solution for its corresponding linearized MGT equations \eqref{Eq-Linearized-MGT}. Moreover, the nonlinear parts $\partial_tu^{\nlin}=\partial_tu^{\nlin}(t,x)$ and $\partial_tv^{\nlin}=\partial_tv^{\nlin}(t,x)$ are expressed, respectively, via
\begin{align}
\partial_tu^{\nlin}(t,x)&:=-(1+\tfrac{B}{2A})\partial_t\ml{K}_2(t,|D|)\big([u_1(x)]^2-[v_1(x)]^2\big)\notag\\
&\ \quad-(1+\tfrac{B}{2A})\int_0^t\partial_t^2\ml{K}_2(t-\theta,|D|)\big([\partial_tu(\theta,x)]^2-[\partial_tv(\theta,x)]^2\big)\mathrm{d}\theta\label{Nonlinear-Rep-01}
\end{align}
and
\begin{align}
\partial_tv^{\nlin}(t,x)&:=-(2+\tfrac{B}{A})\partial_t\ml{K}_2(t,|D|)[u_1(x)v_1(x)]\notag\\
&\ \quad-(2+\tfrac{B}{A})\int_0^t\partial_t^2\ml{K}_2(t-\theta,|D|)[\partial_tu(\theta,x)\partial_tv(\theta,x)]\mathrm{d}\theta\label{Nonlinear-Rep-02}
\end{align}
with the aid of integrations by parts in regard to $t$. Note that $\ml{K}_2(0,|D|)\equiv0\equiv\partial_t\ml{K}_2(0,|D|)$.

In the forthcoming subsections, we are going to demonstrate global in-time existence and uniqueness results of regular small data Sobolev solution for the strongly coupled JMGT systems \eqref{Eq-System-JMGT} via proving a fixed point of operator $\ml{N}$, namely, $\ml{N}[\ml{V}]\in\ml{Y}^{s}(T)$ for any $T>0$. To achieve this aim, setting
\begin{align*}
\Lambda_{1,2}^{u,v}:=\{(u,1),(v,2)\},
\end{align*}
 we would like to justify the following two uniformly in-time $T$ inequalities:
\begin{align}
\|\ml{N}[\ml{V}]\|_{\ml{Y}^{s}(T)}&\lesssim\sum\limits_{(w,j)\in\Lambda_{1,2}^{u,v}}\|(w_0,w_1,w_2)\|_{\mb{D}_{s,m_j}}+\|(u_1,v_1)\|^2_{H^{s+\sigma}\times H^{s+\sigma}}+\|\ml{V}\|_{\ml{Y}^{s}(T)}^2,\label{Crucial-03}\\
\|\ml{N}[\ml{V}]-\ml{N}[\widetilde{\ml{V}}]\|_{\ml{Y}^{s}(T)}&\lesssim\|\ml{V}-\widetilde{\ml{V}}\|_{\ml{Y}^{s}(T)}\left(\|\ml{V}\|_{\ml{Y}^{s}(T)}+\|\widetilde{\ml{V}}\|_{\ml{Y}^{s}(T)}\right),\label{Crucial-04}
\end{align}
for any $\ml{V},\widetilde{\ml{V}}\in\ml{Y}^{s}(T)$ carrying the identical initial data, where the data spaces $\mb{D}_{s,m_1}$ and $\mb{D}_{s,m_2}$ are defined in \eqref{Data-Spaces}. Taking the sufficiently small size of initial data (in their function spaces) whose philosophy is similar to \eqref{Smallness}, we then combine \eqref{Crucial-03} and \eqref{Crucial-04} to conclude that there exists a global in-time regular small data Sobolev solution $(\partial_tu,\partial_tv)^{\mathrm{T}}\in \ml{Y}^{s}(+\infty)$ according to the standard Banach fixed point argument. As our byproduct, it will also imply some polynomially decay estimates of solution because the definition of $\ml{Y}^{s}(+\infty)$ as well as
\begin{align*}
\|(\partial_tu,\partial_tv)^{\mathrm{T}}\|_{\ml{Y}^{s}(+\infty)}&\lesssim\sum\limits_{(w,j)\in\Lambda_{1,2}^{u,v}}\|(w_0,w_1,w_2)\|_{\mb{D}_{s,m_j}}+\|(u_1,v_1)\|^2_{H^{s+\sigma}\times H^{s+\sigma}}
\end{align*}
uniformly in-time $T$ where the smallness of initial data are considered in the last line.

We recall the fractional Gagliardo-Nirenberg inequality \cite{Hajaiej-Molinet-Ozawa-Wang-2011} and the fractional Leibniz rule \cite{Grafakos-Oh-2014}, respectively, which will be used to estimate the nonlinearities later.
\begin{prop}\label{fractionalgagliardonirenbergineq}
	Let $p,p_0,p_1\in(1,+\infty)$ and $\kappa\in[0,s)$ with $s\in(0,+\infty)$. Then, the following fractional Gagliardo-Nirenberg inequality holds:
	\begin{align*}
		\|f\|_{\dot{H}^{\kappa}_{p}}\lesssim\|f\|_{L^{p_0}}^{1-\beta}\|f\|^{\beta}_{\dot{H}^{s}_{p_1}},
	\end{align*}
	where  $\beta=\frac{\frac{1}{p_0}-\frac{1}{p}+\frac{\kappa}{n}}{\frac{1}{p_0}-\frac{1}{p_1}+\frac{s}{n}}$ and $ \beta\in[\frac{\kappa}{s},1]$.
\end{prop}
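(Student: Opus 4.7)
The plan is to establish the fractional Gagliardo-Nirenberg inequality via a Littlewood-Paley decomposition combined with Bernstein-type inequalities and a dyadic optimization, following the philosophy of \cite{Hajaiej-Molinet-Ozawa-Wang-2011}. First, I would decompose $f=\sum_{j\in\mathbb{Z}}\Delta_j f$, with $\Delta_j$ the standard Littlewood-Paley projector onto the annulus $\{|\xi|\sim 2^j\}$. Using the square-function characterization
\[
\|f\|_{\dot{H}^{\kappa}_{p}}\approx \Bigl\|\Bigl(\sum_{j\in\mathbb{Z}}2^{2j\kappa}|\Delta_j f|^2\Bigr)^{1/2}\Bigr\|_{L^{p}},
\]
it suffices to control the weighted square function on the right-hand side.

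Next, I would apply Bernstein's inequality on each dyadic block in two complementary ways. On the low-regularity side, assuming $p\geq p_0$,
\[
\|\Delta_j f\|_{L^{p}}\lesssim 2^{jn(\frac{1}{p_0}-\frac{1}{p})}\|f\|_{L^{p_0}},
\]
while on the high-regularity side, assuming $p\geq p_1$,
\[
\|\Delta_j f\|_{L^{p}}\lesssim 2^{j(n(\frac{1}{p_1}-\frac{1}{p})-s)}\|f\|_{\dot{H}^{s}_{p_1}}.
\]
Choosing a dyadic threshold $N\in\mathbb{Z}$, bounding low frequencies ($j\leq N$) by the first estimate and high frequencies ($j>N$) by the second, summing the two geometric series and then optimizing $N$ in terms of the ratio $\|f\|_{L^{p_0}}/\|f\|_{\dot{H}^{s}_{p_1}}$ produces, by dimensional analysis, the interpolation exponent
\[
\beta=\frac{\frac{1}{p_0}-\frac{1}{p}+\frac{\kappa}{n}}{\frac{1}{p_0}-\frac{1}{p_1}+\frac{s}{n}},
\]
reproducing the stated inequality. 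The admissibility condition $\beta\in[\kappa/s,1]$ is precisely the constraint that both geometric series converge at the chosen threshold (positive decay above, negative growth below).

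The main obstacle is the regime $p<\max\{p_0,p_1\}$, where the pointwise Bernstein bounds above do not apply directly. There one must upgrade the argument by writing $\Delta_j f=2^{jn}\psi(2^j\cdot)\ast f$ for a Schwartz kernel $\psi$, dominating $|\Delta_j f|$ pointwise by the Hardy-Littlewood maximal function, and then invoking the vector-valued Fefferman-Stein maximal inequality to recover $L^{p}(\ell^2)$ control of the square function for any $p\in(1,+\infty)$. A secondary technical point is to accommodate the two endpoint values of $\beta$: the upper endpoint $\beta=1$ reduces to the standard Sobolev embedding $\dot{H}^{s}_{p_1}\hookrightarrow\dot{H}^{\kappa}_{p}$ under the matching scaling, while the lower endpoint $\beta=\kappa/s$ is reached as a limit from the interior and can be included by slightly perturbing $p_0$ and $p_1$.
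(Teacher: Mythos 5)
The paper itself contains no proof of Proposition \ref{fractionalgagliardonirenbergineq}: it is recalled as a known result from \cite{Hajaiej-Molinet-Ozawa-Wang-2011} (and in Subsections \ref{Sub-Section-Est-Data}--\ref{Sub-Section-Est-Nonlinear-Terms} it is only ever invoked with $\kappa=0$, $p_0=p_1=2$ and target exponent $\geqslant 2$), so your argument has to stand on its own. Its core -- Littlewood-Paley blocks, the two Bernstein estimates, a frequency threshold optimized against the ratio $\|f\|_{L^{p_0}}/\|f\|_{\dot{H}^s_{p_1}}$, and the implicit embedding $\dot{B}^{\kappa}_{p,1}\hookrightarrow\dot{H}^{\kappa}_p$ when you sum the blockwise bounds -- is the standard route and is correct in the regime $p\geqslant\max\{p_0,p_1\}$ with $\kappa/s<\beta<1$, and reducing $\beta=1$ to the Sobolev embedding is also fine.

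The two issues you flag as ``technical'' are, however, genuine gaps, and they sit exactly where the general statement is nontrivial. (i) The case $p<\max\{p_0,p_1\}$ is admissible under the stated hypotheses (for instance $n=1$, $\kappa=0$, $s=\tfrac14$, $p_1=\tfrac32$, $p_0=6$ and small $\beta>0$ force $p<p_0$), and your proposed repair does not work: the pointwise bound $|\Delta_jf|\lesssim Mf$ destroys the $j$-dependence that makes the two geometric series summable, and the Fefferman-Stein inequality only restores square-function equivalences within a fixed Lebesgue exponent; neither tool produces a bound of the type $\|\Delta_jf\|_{L^p}\lesssim 2^{jn(1/p_0-1/p)}\|f\|_{L^{p_0}}$ when $p<p_0$, which is simply false (no reverse Bernstein inequality: take a frequency-localized function spread over a huge region). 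Covering this range requires a different device, e.g.\ an $x$-dependent splitting threshold or H\"older in $j$ followed by H\"older in $x$, as in \cite{Hajaiej-Molinet-Ozawa-Wang-2011}. (ii) The endpoint $\beta=\kappa/s$ cannot be obtained ``by slightly perturbing $p_0$ and $p_1$'': the implicit constants depend on the exponents and $\|f\|_{L^{p_0\pm\varepsilon}}$ is not controlled by $\|f\|_{L^{p_0}}$, so no limiting argument applies; this endpoint needs its own (short) proof, e.g.\ the pointwise H\"older bound $\sum_j2^{2j\kappa}|\Delta_jf|^2\leqslant\big(\sum_j2^{2js}|\Delta_jf|^2\big)^{\kappa/s}\big(\sum_j|\Delta_jf|^2\big)^{1-\kappa/s}$ followed by H\"older in $x$ with $\tfrac1p=\tfrac{1-\kappa/s}{p_0}+\tfrac{\kappa/s}{p_1}$. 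Finally, the claim that $\beta\in[\kappa/s,1]$ is ``precisely'' the convergence condition for your two series is inaccurate: once $p\geqslant\max\{p_0,p_1\}$, convergence corresponds to $\beta\in(0,1)$ and $\beta\geqslant\kappa/s$ is automatic there; the lower bound $\beta\geqslant\kappa/s$ is a separate necessary condition (seen by testing $f(x)=\phi(x)\mathrm{e}^{i\lambda x_1}$, $\lambda\to+\infty$) that lives precisely in the parameter ranges your sketch does not reach.
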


\begin{prop}\label{fractionleibnizrule}
	Let $s\in(0,+\infty)$, $r\in[1,+\infty]$ and $p_1,p_2,q_1,q_2\in(1,+\infty]$ satisfying the relation $\frac{1}{r}=\frac{1}{p_1}+\frac{1}{p_2}=\frac{1}{q_1}+\frac{1}{q_2}$. Then, the following fractional Leibniz rule holds:
	\begin{align*}
		\|fg\|_{\dot{H}^{s}_{r}}\lesssim \|f\|_{\dot{H}^{s}_{p_1}}\|g\|_{L^{p_2}}+\|f\|_{L^{q_1}}\|g\|_{\dot{H}^{s}_{q_2}}.
	\end{align*}
\end{prop}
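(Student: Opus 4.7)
The plan is to reduce the bilinear estimate to dyadic Littlewood--Paley blocks via Bony's paraproduct formula. Denote by $\Delta_j$ the standard Littlewood--Paley projector onto $\{|\xi|\sim 2^j\}$ and by $S_j:=\sum_{k\leqslant j-N}\Delta_k$ the associated low-frequency cutoff, for a fixed integer $N\geqslant 3$. I would decompose
\[
fg=T_{f}g+T_{g}f+R(f,g),
\]
with $T_{f}g:=\sum_{j\in\mb{Z}}S_{j-N}f\cdot\Delta_{j}g$, with $T_{g}f$ defined symmetrically, and with the resonant remainder $R(f,g):=\sum_{|j-k|\leqslant N}\Delta_{j}f\cdot\Delta_{k}g$. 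The ambient tool is the Littlewood--Paley square-function characterization
\[
\|h\|_{\dot{H}^{s}_{r}}\approx\Bigl\|\Bigl(\sum_{j\in\mb{Z}}2^{2js}|\Delta_{j}h|^{2}\Bigr)^{1/2}\Bigr\|_{L^{r}},
\]
valid for $r\in(1,+\infty)$, together with the Fefferman--Stein vector-valued maximal inequality for finite exponents.

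For the ``high--low'' paraproduct $T_{g}f$, the summand $S_{j-N}g\cdot\Delta_{j}f$ is frequency-localized to $\{|\xi|\sim 2^{j}\}$, so $\Delta_{\ell}(T_{g}f)$ vanishes unless $\ell\sim j$. Hölder's inequality with the pair $(p_{1},p_{2})$ yields $\|\Delta_{\ell}(T_{g}f)\|_{L^{r}}\lesssim \|g\|_{L^{p_{2}}}\sum_{j\sim\ell}\|\Delta_{j}f\|_{L^{p_{1}}}$, using the uniform boundedness of $S_{j-N}$ on $L^{p_{2}}$. Multiplying by $2^{\ell s}$, taking the $\ell^{2}$ norm in $\ell$, and applying Fefferman--Stein gives $\|T_{g}f\|_{\dot{H}^{s}_{r}}\lesssim \|f\|_{\dot{H}^{s}_{p_{1}}}\|g\|_{L^{p_{2}}}$. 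The symmetric argument delivers $\|T_{f}g\|_{\dot{H}^{s}_{r}}\lesssim \|f\|_{L^{q_{1}}}\|g\|_{\dot{H}^{s}_{q_{2}}}$.

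The remainder is the crucial step, and is where $s>0$ enters. Here $\Delta_{j}f\cdot\Delta_{k}g$ with $|j-k|\leqslant N$ has Fourier support in $\{|\xi|\lesssim 2^{\max(j,k)}\}$, so $\Delta_{\ell}R(f,g)$ collects contributions from all $j\geqslant \ell-N$. Applying Hölder with either $(p_{1},p_{2})$ or $(q_{1},q_{2})$ and factoring out the weight $2^{(\ell-j)s}$, I would write
\[
2^{\ell s}\|\Delta_{\ell}R(f,g)\|_{L^{r}}\lesssim \sum_{j\geqslant\ell-N}2^{(\ell-j)s}\cdot 2^{js}\,\|\Delta_{j}f\|_{L^{p_{1}}}\|\Delta_{j}g\|_{L^{p_{2}}}.
\]
Since $s>0$, the weight $2^{(\ell-j)s}$ is summable in $j-\ell\geqslant-N$, so Schur's test combined with Cauchy--Schwarz and a final use of Fefferman--Stein closes the bound as $\|R(f,g)\|_{\dot{H}^{s}_{r}}\lesssim \|f\|_{\dot{H}^{s}_{p_{1}}}\|g\|_{L^{p_{2}}}$ (or the symmetric counterpart), completing the estimate in the ``interior'' range.

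The main obstacle is the endpoint range of exponents: the square-function characterization and Fefferman--Stein break down when $r\in\{1,+\infty\}$ or when $p_{i},q_{i}=+\infty$, and these cases are genuinely needed in the stated range $r\in[1,+\infty]$, $p_{i},q_{i}\in(1,+\infty]$. To treat them one must replace $L^{\infty}$ by $\mathrm{BMO}$ in the paraproduct bounds and substitute the Hardy space $\mathcal{H}^{r}$ for $L^{r}$ at $r=1$, which can be done via a Coifman--Meyer-type multiplier representation of the bilinear operator $|D|^{s}(fg)-|D|^{s}f\cdot g-f\cdot|D|^{s}g$, as carried out in \cite{Grafakos-Oh-2014}; the ``interior'' sub-range treated by the dyadic argument above already covers all applications in the subsequent subsections of this manuscript.
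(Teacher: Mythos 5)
The paper does not actually prove this proposition: it is recalled as the known Kato--Ponce/fractional Leibniz inequality with a pointer to \cite{Grafakos-Oh-2014}, so any self-contained argument you give is by construction a different route. Your Bony paraproduct sketch is the standard proof and is correct in outline for the interior range of exponents: the two paraproducts are handled by frequency localization, H\"older, and the Littlewood--Paley square function, and the resonant piece uses the summability of $2^{(\ell-j)s}$ for $s>0$. Two places are written more loosely than they should be: (i) the passage ``H\"older at the level of $\|\Delta_\ell(T_g f)\|_{L^r}$, then multiply by $2^{\ell s}$, take $\ell^2$, apply Fefferman--Stein'' mixes norm-level and pointwise estimates --- the clean version bounds $|S_{j-N}g|\lesssim Mg$ pointwise, applies the Fefferman--Stein inequality to $\{\Delta_\ell(S_{j-N}g\,\Delta_j f)\}$ inside $L^r(\ell^2)$, and only then uses H\"older with $\|Mg\|_{L^{p_2}}\lesssim\|g\|_{L^{p_2}}$; (ii) in the remainder, exchanging the $\ell^2$-sum and the $L^r$-norm (your Schur/Cauchy--Schwarz step) is only immediate for $r\leqslant 2$, so for $r>2$ one should again argue pointwise with the vector-valued maximal inequality rather than on norms. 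Both are routine repairs. You are also right, and commendably explicit, that the endpoints $r\in\{1,+\infty\}$ (and the genuinely delicate $r=\infty$ case, which is not even in the range of \cite{Grafakos-Oh-2014} and requires separate endpoint results) are not reachable by this dyadic argument; since the paper itself only invokes the proposition with $r=2$, one factor measured in $L^\infty$ and the other in $\dot{H}^s_2$ (Subsections \ref{Sub-Section-Est-Data} and \ref{Sub-Section-Proof-Regular-Small-Solution}), your interior-range proof indeed suffices for every application in the manuscript, while the full stated range continues to rest on the cited reference --- exactly as it does in the paper.
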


The next embedding result is stated in \cite[Exercise 6.1.2]{Grafakos=2009} without its proof (as an exercise). We next demonstrate it rigorously for the sake of our completeness. Remark that \cite[Lemma A.1]{D'Abbicco-Ebert-Lucente=2017} is a direct consequence of next proposition by using the Young inequality, but now the parameters $\alpha_0,\beta_0$ are more flexible.
\begin{prop}\label{fractionembedd} Let $-\infty<2\alpha_0<n<2\beta_0<+\infty$. Then, the following fractional Sobolev embedding holds:
	\begin{equation*}
		\|f\|_{L^{\infty}}\lesssim\|f\|_{\dot{H}^{\alpha_0}}^{\frac{2\beta_0-n}{2(\beta_0-\alpha_0)}}\|f\|_{\dot{H}^{\beta_0}}^{\frac{n-2\alpha_0}{2(\beta_0-\alpha_0)}}.
	\end{equation*}
\end{prop}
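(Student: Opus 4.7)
The plan is to use the Fourier inversion bound $\|f\|_{L^\infty}\leq \|\widehat f\|_{L^1}$ together with a frequency splitting at a threshold $R>0$ that will be optimized at the end. Writing
\begin{align*}
\|\widehat f\|_{L^1}=\int_{|\xi|\leq R}|\widehat f(\xi)|\,\mathrm{d}\xi+\int_{|\xi|>R}|\widehat f(\xi)|\,\mathrm{d}\xi,
\end{align*}
I would factor $|\widehat f(\xi)|=|\xi|^{-\alpha_0}\cdot|\xi|^{\alpha_0}|\widehat f(\xi)|$ on the low-frequency piece and $|\widehat f(\xi)|=|\xi|^{-\beta_0}\cdot|\xi|^{\beta_0}|\widehat f(\xi)|$ on the high-frequency piece, and apply Cauchy--Schwarz to each part.

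The first key step is to verify the two weight integrals converge and compute them explicitly in polar coordinates: $\int_{|\xi|\leq R}|\xi|^{-2\alpha_0}\mathrm{d}\xi\lesssim R^{n-2\alpha_0}$, which needs $n-2\alpha_0>0$ (this uses the left inequality $2\alpha_0<n$); and $\int_{|\xi|>R}|\xi|^{-2\beta_0}\mathrm{d}\xi\lesssim R^{n-2\beta_0}$, which needs $n-2\beta_0<0$ (this uses the right inequality $n<2\beta_0$). These are exactly the two hypotheses, and this is precisely where they enter the argument. Combining with Plancherel on the remaining factors, I arrive at
\begin{align*}
\|f\|_{L^\infty}\lesssim R^{\frac{n-2\alpha_0}{2}}\|f\|_{\dot H^{\alpha_0}}+R^{\frac{n-2\beta_0}{2}}\|f\|_{\dot H^{\beta_0}}.
\end{align*}

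The final step is optimization in $R>0$: I balance the two terms by choosing $R=(\|f\|_{\dot H^{\beta_0}}/\|f\|_{\dot H^{\alpha_0}})^{1/(\beta_0-\alpha_0)}$, which is admissible because $\beta_0-\alpha_0>0$ by assumption (and we may assume $\|f\|_{\dot H^{\alpha_0}}>0$, otherwise $f\equiv0$). Substituting yields the claimed exponents $\frac{2\beta_0-n}{2(\beta_0-\alpha_0)}$ on $\|f\|_{\dot H^{\alpha_0}}$ and $\frac{n-2\alpha_0}{2(\beta_0-\alpha_0)}$ on $\|f\|_{\dot H^{\beta_0}}$, whose sum is $1$ as a consistency check.

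There is no serious obstacle here; the proof is essentially Fourier inversion plus dyadic splitting, and the main subtlety is simply recognizing that the range condition $2\alpha_0<n<2\beta_0$ is exactly what is needed for both weight integrals to converge on the correct sides of $R$. One minor point to handle with care is that $\alpha_0$ may be negative (so $|\xi|^{-2\alpha_0}$ may grow at infinity), but since we integrate this weight only on $|\xi|\leq R$ it remains harmless. A brief density remark (for example working first on Schwartz functions and extending by continuity) closes the argument.
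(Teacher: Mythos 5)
Your proposal is correct and is essentially the paper's own argument: the paper also bounds $\|f\|_{L^{\infty}}$ by $\|\widehat{f}\|_{L^1}$, splits low/high frequencies, applies Cauchy--Schwarz using exactly the convergence conditions $2\alpha_0<n<2\beta_0$, and then balances the two terms (the paper's scaling parameter $\lambda_0$ plays precisely the role of your splitting radius $R$). No meaningful differences.
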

\begin{proof}
	Let us start with separating the next integral into two parts:
	\begin{align*}
		\|\widehat{f}(\lambda_0\xi)\|_{L^1}=\left(\int_{|\xi|\leqslant 1}+\int_{|\xi|> 1}\right)|\widehat{f}(\lambda_0\xi)|\mathrm{d}\xi
	\end{align*}
	with a suitable parameter $\lambda_0>0$ that will be chosen later. By employing the Cauchy-Schwarz inequality, we obtain
	\begin{align*}
		\|\widehat{f}(\lambda_0\xi)\|_{L^1}&\leqslant\left(\int_{|\xi|\leqslant 1}|\xi|^{-2\alpha_0}\mathrm{d}\xi\right)^{1/2}\|\,|\xi|^{\alpha_0}\widehat{f}(\lambda_0\xi)\|_{L^2}+\left(\int_{|\xi|> 1}|\xi|^{-2\beta_0}\mathrm{d}\xi\right)^{1/2}\|\,|\xi|^{\beta_0}\widehat{f}(\lambda_0\xi)\|_{L^2}\\
		&\lesssim\|\,|\xi|^{\alpha_0}\widehat{f}(\lambda_0\xi)\|_{L^2}+\|\,|\xi|^{\beta_0}\widehat{f}(\lambda_0\xi)\|_{L^2},
	\end{align*}
	where we used the integrabilities of $|\xi|^{-2\alpha_0}\mathrm{d}\xi$ when $|\xi|\leqslant 1$ and $|\xi|^{-2\beta_0}\mathrm{d}\xi$ when $|\xi|>1$ thanks to our condition $2\alpha_0<n<2\beta_0$. According to the scaling argument, one deduces
	\begin{align*}
		\|\widehat{f}(\xi)\|_{L^1}\lesssim \lambda_0^{\frac{n}{2}-\alpha_0}\|\,|\xi|^{\alpha_0}\widehat{f}(\xi)\|_{L^2}+\lambda_0^{\frac{n}{2}-\beta_0}\|\,|\xi|^{\beta_0}\widehat{f}(\xi)\|_{L^2}.
	\end{align*}
	By the balance law, let us select
	\begin{align*}
		\lambda_0=\|\,|\xi|^{\alpha_0}\widehat{f}(\xi)\|_{L^2}^{-\frac{1}{\beta_0-\alpha_0}}\|\,|\xi|^{\beta_0}\widehat{f}(\xi)\|_{L^2}^{\frac{1}{\beta_0-\alpha_0}}
	\end{align*}
	to arrive at
	\begin{align*}
		\|f\|_{L^{\infty}}\leqslant \|\widehat{f}(\xi)\|_{L^1}\lesssim\|\,|\xi|^{\alpha_0}\widehat{f}(\xi)\|_{L^2}^{\frac{2\beta_0-n}{2(\beta_0-\alpha_0)}}\|\,|\xi|^{\beta_0}\widehat{f}(\xi)\|_{L^2}^{\frac{n-2\alpha_0}{2(\beta_0-\alpha_0)}},
	\end{align*}
	which finishes the proof by employing the Plancherel identity.
\end{proof}

\subsection{Estimates related to the initial data}\label{Sub-Section-Est-Data}
\hspace{5mm}According to the definitions of norms for $\ml{Y}_j^{s}(T)$, by using Proposition \ref{Prop-L2-Lm-Real}, it is clear that
\begin{align}\label{Est-Linear}
	\|\partial_tu^{\lin}\|_{\ml{Y}_1^{s}(T)}+\|\partial_tv^{\lin}\|_{\ml{Y}_2^{s}(T)}\lesssim\sum\limits_{(w,j)\in\Lambda_{1,2}^{u,v}}\|(w_0,w_1,w_2)\|_{\mb{D}_{s,m_j}}
\end{align}
for any $T>0$. Besides these linear parts, we also found that the initial data (in the nonlinear forms) appear in \eqref{Nonlinear-Rep-01} and \eqref{Nonlinear-Rep-02}. With the help of Proposition \ref{Prop-L2-Lm-Real} again carrying $w_2\in\{u_1^2-v_1^2,u_1v_1\}$, we obtain the uniform in-time $T$ estimates
\begin{align*}
\|\partial_t\ml{K}_2(t,|D|)(u_1^2-v_1^2)\|_{\ml{Y}_1^{s}(T)}&\lesssim\|u_1^2-v_1^2\|_{H^{s}\cap L^{m_1}},\\
\|\partial_t\ml{K}_2(t,|D|)(u_1v_1)\|_{\ml{Y}_2^{s}(T)}&\lesssim\|u_1v_1\|_{H^{s}\cap L^{m_2}}.
\end{align*}
For this reason, we in the next proposition are going to deal with their right-hand sides, which allows some natural assumptions for the initial data instead of assumptions for the nonlinear data.

\begin{prop}\label{Prop-Data}
Let $m_1,m_2\in[1,2)$ and $s>[\frac{n}{2}-\sigma]_+$. Then, the nonlinear forms of initial data satisfy
\begin{align}
\|u_1^2-v_1^2\|_{H^{s}\cap L^{m_1}}&\lesssim \|u_1\|_{H^{s+\sigma}}^2+\|v_1\|_{H^{s+\sigma}}^2,\label{Est-01}\\
\|u_1v_1\|_{H^{s}\cap L^{m_2}}&\lesssim \|u_1\|_{H^{s+\sigma}}^2+\|v_1\|_{H^{s+\sigma}}^2.\label{Est-02}
\end{align}
\end{prop}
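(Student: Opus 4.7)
The plan hinges on the single observation that the hypothesis $s > [\frac{n}{2} - \sigma]_+$ forces $s + \sigma > \frac{n}{2}$, regardless of whether $n \geqslant 2\sigma$ or $n < 2\sigma$. Consequently the classical Sobolev embedding yields $H^{s+\sigma} \hookrightarrow L^\infty$, and by interpolation with the trivial $H^{s+\sigma} \hookrightarrow L^2$ one also has $H^{s+\sigma} \hookrightarrow L^p$ for every $p \in [2, +\infty]$. Since $m_j \in [1, 2)$ gives $2m_j \in [2, 4) \subset [2, +\infty]$, this single chain of embeddings handles every $L^{2m_j}$ norm appearing below.

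For the $L^{m_j}$ halves of \eqref{Est-01} and \eqref{Est-02}, I would first apply H\"older's inequality, namely
\begin{align*}
\|u_1^2 - v_1^2\|_{L^{m_1}} \leqslant \|u_1\|_{L^{2m_1}}^2 + \|v_1\|_{L^{2m_1}}^2, \quad \|u_1 v_1\|_{L^{m_2}} \leqslant \|u_1\|_{L^{2m_2}} \|v_1\|_{L^{2m_2}},
\end{align*}
and then bound each $L^{2m_j}$ norm by $\|\cdot\|_{H^{s+\sigma}}$ via the embedding chain from the previous paragraph. For $u_1 v_1$, closing with Young's inequality $ab \leqslant \tfrac{1}{2}(a^2 + b^2)$ produces the symmetric right-hand side of \eqref{Est-02}.

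For the $H^s$ halves I would split $\|\cdot\|_{H^s} \approx \|\cdot\|_{L^2} + \|\cdot\|_{\dot H^s}$. The $L^2$ contribution reduces to, for instance, $\|u_1 v_1\|_{L^2} \leqslant \|u_1\|_{L^4}\|v_1\|_{L^4}$, which is again controlled by $\|u_1\|_{H^{s+\sigma}}\|v_1\|_{H^{s+\sigma}}$ through Sobolev embedding. For the homogeneous piece I would invoke Proposition \ref{fractionleibnizrule} (the fractional Leibniz rule) with parameters $(r, p_1, p_2, q_1, q_2) = (2, 2, +\infty, +\infty, 2)$, yielding
\begin{align*}
\|u_1 v_1\|_{\dot H^s} \lesssim \|u_1\|_{\dot H^s}\|v_1\|_{L^\infty} + \|u_1\|_{L^\infty}\|v_1\|_{\dot H^s} \lesssim \|u_1\|_{H^{s+\sigma}}\|v_1\|_{H^{s+\sigma}},
\end{align*}
where the final step uses $\|u_1\|_{\dot H^s} \leqslant \|u_1\|_{H^{s+\sigma}}$ (since $s \leqslant s + \sigma$) together with the Sobolev embedding controlling $\|u_1\|_{L^\infty}$. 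The bound for $u_1^2 - v_1^2$ follows by specialising the same argument to each squared term and adding, with Young's inequality collecting any cross terms.

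There is no serious obstacle: the threshold $s + \sigma > n/2$ places us squarely in the algebra regime for $H^{s+\sigma}$, so every product estimate collapses to a direct application of Sobolev embedding combined with Proposition \ref{fractionleibnizrule}. The only minor point worth verifying is that the endpoint pairing $p_1 = 2$, $p_2 = +\infty$ (and its symmetric partner) is admissible in Proposition \ref{fractionleibnizrule}, which is granted by its hypothesis $p_i, q_i \in (1, +\infty]$. Note also that the auxiliary parameter restriction $\tfrac{2}{\max\{m_1, m_2\}} \geqslant \tfrac{1}{\min\{m_1, m_2\}} + \tfrac{\sigma}{n}$ appearing in Theorem \ref{Thm-regular-small-data} plays no role here; it enters only later when estimating the genuinely time-dependent nonlinear terms via the fractional Gagliardo-Nirenberg inequality.
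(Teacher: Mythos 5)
Your proposal is correct and follows essentially the same route as the paper: split the $H^{s}\cap L^{m_j}$ norms into Lebesgue and homogeneous Sobolev pieces, bound the Lebesgue pieces through $L^{2p}$ norms controlled by $\|\cdot\|_{H^{s+\sigma}}$, and treat the $\dot{H}^{s}$ piece with the fractional Leibniz rule plus an $L^{\infty}$ bound coming from $s+\sigma>\frac{n}{2}$. The only difference is cosmetic: where you invoke the classical embedding $H^{s+\sigma}\hookrightarrow L^{\infty}\cap L^{2}$ together with Lebesgue interpolation, the paper uses the fractional Gagliardo--Nirenberg inequality (Proposition \ref{fractionalgagliardonirenbergineq}) for the $L^{2p}$ norms and its interpolation-type embedding (Proposition \ref{fractionembedd}) for the $L^{\infty}$ factor, yielding the same bounds under the same hypothesis.
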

\begin{remark}\label{Rem-m1-m2}
The condition $s>[\frac{n}{2}-\sigma]_+$ ensures
\begin{align*}
\max\{m_1,m_2\}\leqslant\frac{n}{[n-2(s+\sigma)]_+}=+\infty.
\end{align*}
Hence, we do not need to propose an additional assumption for $m_1,m_2$ here.
\end{remark}

\begin{proof}
For any $p\in[1,2]$, by using the fractional Gagliardo-Nirenberg inequality in Proposition \ref{fractionalgagliardonirenbergineq} with $\beta=\frac{n}{s+\sigma}(\frac{1}{2}-\frac{1}{2p})\in[0,1]$, we may get
\begin{align*}
\|u_1^2\|_{L^{p}}=\|u_1\|_{L^{2p}}^2\lesssim\|u_1\|_{L^2}^{2(1-\beta)}\|u_1\|_{\dot{H}^{s+\sigma}}^{2\beta}\lesssim \|u_1\|_{H^{s+\sigma}}^2,
\end{align*}
where the restriction on $\beta$ implies $p\leqslant\frac{n}{n-2(s+\sigma)}$ additionally if $n>2(s+\sigma)$. Similarly, it holds
\begin{align*}
\|v_1^2\|_{L^p}\lesssim \|v_1\|_{H^{s+\sigma}}^2,
\end{align*}
where we pose $p\leqslant\frac{n}{n-2(s+\sigma)}$ if $n>2(s+\sigma)$. By choosing $p\in\{m_1,2\}$, we complete the desired estimate for $\|u_1^2-v_1^2\|_{L^2\cap L^{m_1}}$, in which $s\geqslant\tfrac{n}{4}-\sigma$ from $p=2$ and
\begin{align*}
m_1\leqslant\min\left\{\frac{n}{[n-2(s+\sigma)]_+},\frac{n}{[n-2(s+\sigma)]_+} \right\}\ \mbox{from}\ p=m_1.
\end{align*}
Let us next employ the fractional Sobolev embedding in Proposition \ref{fractionembedd} (where we choose $\alpha_0=0$ and $\beta_0=s+\sigma$) with $s+\sigma>\frac{n}{2}$ to arrive at
\begin{align*}
\|u_1^2\|_{\dot{H}^{s}}+\|v_1^2\|_{\dot{H}^{s}}&\lesssim\|u_1\|_{L^{\infty}}\|u_1\|_{\dot{H}^{s}}+\|v_1\|_{L^{\infty}}\|v_1\|_{\dot{H}^{s}}\\
&\lesssim \|u_1\|_{L^2}^{\frac{2(s+\sigma)-n}{2(s+\sigma)}}\|u_1\|_{\dot{H}^{s+\sigma}}^{\frac{n}{2(s+\sigma)}}\|u_1\|_{\dot{H}^{s}}+\|v_1\|_{L^2}^{\frac{2(s+\sigma)-n}{2(s+\sigma)}}\|v_1\|_{\dot{H}^{s+\sigma}}^{\frac{n}{2(s+\sigma)}}\|v_1\|_{\dot{H}^{s}}\\
&\lesssim\|u_1\|_{H^{s+\sigma}}^2+\|v_1\|_{H^{s+\sigma}}\|v_1\|_{H^{s}}.
\end{align*}
The summary of last estimates completes the proof of \eqref{Est-01}. By following the same way and using $|u_1v_1|\lesssim u_1^2+v_1^2$, we are able to get the analogous estimate \eqref{Est-02}.
\end{proof}

\subsection{Estimates of nonlinear terms}\label{Sub-Section-Est-Nonlinear-Terms}
\hspace{5mm}This part contributes to the estimates for $[\partial_tu(t,\cdot)]^2\pm[\partial_tv(t,\cdot)]^2$ in the $L^p$ and $\dot{H}^{s+\sigma}$ norms, respectively, for any $p\in[1,2]$ (so that we may choose $p\in\{m_1,m_2,2\}$ later) and $s\in\mb{R}_+$. They immediately lead to some a priori estimates for our nonlinearities
\begin{align*}
F_1(t,x):=[\partial_tu(t,x)]^2-[\partial_tv(t,x)]^2\ \mbox{and}\ F_2(t,x):=\partial_tu(t,x)\partial_tv(t,x)
\end{align*}
that will be used later.
\begin{prop}\label{Prop=Estimate-of-Nonlinearities}
Let $p\in[1,2]$ and $s>[\frac{n}{2}-\sigma]_+$.
Then, for any $\ml{V}=(\partial_tu,\partial_tv)^{\mathrm{T}}\in \ml{Y}^{s}(t)$, the nonlinear terms satisfy
\begin{align}
\|\,[\partial_tu(t,\cdot)]^2\pm[\partial_tv(t,\cdot)]^2\|_{L^p}&\lesssim (1+t)^{-\frac{n}{2\sigma}(\frac{2}{\max\{m_1,m_2\}}-\frac{1}{p})}\|\ml{V}\|_{\ml{Y}^{s}(t)}^2,\label{Est-Nonlinear-01}\\
\|\,[\partial_tu(t,\cdot)]^2\pm[\partial_tv(t,\cdot)]^2\|_{\dot{H}^{s+\sigma}}&\lesssim(1+t)^{-\frac{n}{2\sigma}(\frac{2}{\max\{m_1,m_2\}}-\frac{1}{2}+\frac{s+\sigma}{n})}\|\ml{V}\|_{\ml{Y}^{s}(t)}^2.\label{Est-Nonlinear-02}
\end{align}
\end{prop}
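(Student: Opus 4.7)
The plan is to remove the sign ambiguity via the pointwise bound $\bigl|[\partial_tu]^2\pm[\partial_tv]^2\bigr|\leqslant[\partial_tu]^2+[\partial_tv]^2$, reducing the problem to estimating $\|[\partial_tw(t,\cdot)]^2\|_{L^p}$ and $\|[\partial_tw(t,\cdot)]^2\|_{\dot{H}^{s+\sigma}}$ separately for $w\in\{u,v\}$ with its associated index $j\in\{1,2\}$. Because the time-weight in $\ml{Y}_j^{s}(t)$ is decreasing in $m_j$, the slower-decaying contribution, which dominates the sum, corresponds to $\max\{m_1,m_2\}$, matching the announced bounds.

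For \eqref{Est-Nonlinear-01}, I would first rewrite $\|[\partial_tw(t,\cdot)]^2\|_{L^p}=\|\partial_tw(t,\cdot)\|_{L^{2p}}^2$ and apply the fractional Gagliardo--Nirenberg inequality (Proposition \ref{fractionalgagliardonirenbergineq}) to interpolate between $L^2$ and $\dot{H}^{s+\sigma}$:
\begin{align*}
\|\partial_tw(t,\cdot)\|_{L^{2p}}\lesssim\|\partial_tw(t,\cdot)\|_{L^2}^{1-\beta}\|\partial_tw(t,\cdot)\|_{\dot{H}^{s+\sigma}}^{\beta},\quad \beta:=\tfrac{n}{s+\sigma}\bigl(\tfrac{1}{2}-\tfrac{1}{2p}\bigr).
\end{align*}
The hypothesis $s>[\tfrac{n}{2}-\sigma]_+$ together with $p\in[1,2]$ guarantees $\beta\in[0,\tfrac{n}{2(s+\sigma)}]\subset[0,1]$, so the interpolation is admissible. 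Substituting the time-weighted bounds $\|\partial_tw(t,\cdot)\|_{L^2}\lesssim(1+t)^{-\frac{n(2-m_j)}{4m_j\sigma}}\|\ml{V}\|_{\ml{Y}^{s}(t)}$ and $\|\partial_tw(t,\cdot)\|_{\dot{H}^{s+\sigma}}\lesssim(1+t)^{-\frac{n(2-m_j)}{4m_j\sigma}-\frac{s+\sigma}{2\sigma}}\|\ml{V}\|_{\ml{Y}^{s}(t)}$ extracted from the definition of $\ml{Y}_j^{s}(t)$ and simplifying yields $(1+t)^{-\frac{n}{2\sigma}(\frac{2}{m_j}-\frac{1}{p})}\|\ml{V}\|_{\ml{Y}^{s}(t)}^{2}$ for each $w\in\{u,v\}$; summing over $w$ and picking the slower of the two decays yields \eqref{Est-Nonlinear-01}.

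For \eqref{Est-Nonlinear-02}, I would apply the fractional Leibniz rule (Proposition \ref{fractionleibnizrule}) with $r=2$, $p_1=q_2=2$ and $p_2=q_1=+\infty$ to obtain $\|[\partial_tw(t,\cdot)]^2\|_{\dot{H}^{s+\sigma}}\lesssim\|\partial_tw(t,\cdot)\|_{L^\infty}\|\partial_tw(t,\cdot)\|_{\dot{H}^{s+\sigma}}$. Since the hypothesis forces $s+\sigma>n/2$, Proposition \ref{fractionembedd} with $\alpha_0=0$ and $\beta_0=s+\sigma$ is applicable and delivers
\begin{align*}
\|\partial_tw(t,\cdot)\|_{L^\infty}\lesssim\|\partial_tw(t,\cdot)\|_{L^2}^{\frac{2(s+\sigma)-n}{2(s+\sigma)}}\|\partial_tw(t,\cdot)\|_{\dot{H}^{s+\sigma}}^{\frac{n}{2(s+\sigma)}}.
\end{align*}
Inserting the same $\ml{Y}_j^{s}(t)$-based time-decay estimates and collapsing the resulting exponent produces the rate $-\frac{n}{2\sigma}(\frac{2}{m_j}-\frac{1}{2}+\frac{s+\sigma}{n})$ for each $w$; once more, passing to the larger $m_j$ yields \eqref{Est-Nonlinear-02}.

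The main obstacle is the exponent bookkeeping, in particular collapsing
$-2(1-\beta)\cdot\frac{n(2-m_j)}{4m_j\sigma}-2\beta\bigl(\frac{n(2-m_j)}{4m_j\sigma}+\frac{s+\sigma}{2\sigma}\bigr)$
down to the advertised $-\frac{n}{2\sigma}(\frac{2}{m_j}-\frac{1}{p})$ (and the analogous collapse in the $\dot{H}^{s+\sigma}$ case), together with checking that $\beta$ falls in the admissible range of Proposition \ref{fractionalgagliardonirenbergineq} throughout $p\in[1,2]$. Once these arithmetic details are settled, the rest of the argument is a direct application of Propositions \ref{fractionalgagliardonirenbergineq}--\ref{fractionembedd}.
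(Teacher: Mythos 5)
Your proposal is correct and follows essentially the same route as the paper: the fractional Gagliardo--Nirenberg inequality with $\beta=\frac{n}{s+\sigma}(\frac12-\frac{1}{2p})$ for the $L^p$ bound, and the product estimate $\|f^2\|_{\dot{H}^{s+\sigma}}\lesssim\|f\|_{L^{\infty}}\|f\|_{\dot{H}^{s+\sigma}}$ combined with the fractional Sobolev embedding ($\alpha_0=0$, $\beta_0=s+\sigma$, admissible since $s+\sigma>\frac{n}{2}$) for the $\dot{H}^{s+\sigma}$ bound, followed by inserting the $\ml{Y}_j^{s}(t)$ decay rates and retaining the slower decay attached to $\max\{m_1,m_2\}$. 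The exponent collapses you flag do work out exactly to $-\frac{n}{2\sigma}(\frac{2}{m_j}-\frac{1}{p})$ and $-\frac{n}{2\sigma}(\frac{2}{m_j}-\frac12+\frac{s+\sigma}{n})$, matching the paper's computation.
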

\begin{proof}
For one thing, by using the fractional Gagliardo-Nirenberg inequality in Proposition \ref{fractionalgagliardonirenbergineq}, we may derive
\begin{align*}
\mbox{LHS of \eqref{Est-Nonlinear-01}}&\lesssim\sum\limits_{w\in\{u,v\}}\|\partial_tw(t,\cdot)\|_{L^2}^{2(1-\beta)}\|\partial_tw(t,\cdot)\|_{\dot{H}^{s+\sigma}}^{2\beta}\\
&\lesssim (1+t)^{-\frac{n(2-m_1)}{2m_1\sigma}-\frac{n}{2\sigma}(1-\frac{1}{p})}\|\partial_tu\|_{\ml{Y}_1^{s}(t)}^2+(1+t)^{-\frac{n(2-m_2)}{2m_2\sigma}-\frac{n}{2\sigma}(1-\frac{1}{p})}\|\partial_tv\|_{\ml{Y}_2^{s}(t)}^2\\
&\lesssim (1+t)^{-\frac{n}{2\sigma}(\frac{2}{\max\{m_1,m_2\}}-\frac{1}{p})}\|\ml{V}\|_{\ml{Y}^{s}(t)}^2,
\end{align*}
thanks to the condition $p\leqslant\frac{n}{[n-2(s+\sigma)]_+}$ coming from $\beta=\frac{n}{s+\sigma}(\frac{1}{2}-\frac{1}{2p})\in[0,1]$. For another, by using the fractional Sobolev embedding in Proposition \ref{fractionembedd}, we may derive
\begin{align*}
\mbox{LHS of \eqref{Est-Nonlinear-02}}&\lesssim\sum\limits_{w\in\{u,v\}}\|\partial_tw(t,\cdot)\|_{L^{\infty}}\|\partial_tw(t,\cdot)\|_{\dot{H}^{s+\sigma}}\\
&\lesssim\sum\limits_{w\in\{u,v\}}\|\partial_tw(t,\cdot)\|_{L^2}^{\frac{2(s+\sigma)-n}{2(s+\sigma)}}\|\partial_tw(t,\cdot)\|_{\dot{H}^{s+\sigma}}^{\frac{n}{2(s+\sigma)}+1}\\
&\lesssim(1+t)^{-\frac{n(2-m_1)}{2m_1\sigma}-\frac{n+2(s+\sigma)}{4\sigma}}\|\partial_tu\|_{\ml{Y}_1^{s}(t)}^2+(1+t)^{-\frac{n(2-m_2)}{2m_2\sigma}-\frac{n+2(s+\sigma)}{4\sigma}}\|\partial_tv\|_{\ml{Y}_2^{s}(t)}^2\\
&\lesssim(1+t)^{-\frac{n}{2\sigma}(\frac{2}{\max\{m_1,m_2\}}-\frac{1}{2}+\frac{s+\sigma}{n})}\|\ml{V}\|_{\ml{Y}^{s}(t)}^2,
\end{align*}
thanks to the condition $s+\sigma>\frac{n}{2}$. Note that we do not propose an additional assumption on $p$ due to  Remark \ref{Rem-m1-m2}.
\end{proof}

\subsection{Proof of Theorem \ref{Thm-regular-small-data}}\label{Sub-Section-Proof-Regular-Small-Solution}
\hspace{5mm}Let us apply the $(\dot{H}^{s+\sigma}\cap L^{m_j})-\dot{H}^{s+\sigma}$ estimate in $[0,\frac{t}{2}]$ and the $\dot{H}^{s+\sigma}-\dot{H}^{s+\sigma}$ estimate in $[\frac{t}{2},t]$ from Proposition \ref{Prop-L2-inhomogeneous}. Combining Proposition \ref{Prop=Estimate-of-Nonlinearities} to estimate $F_1(\theta,\cdot)$ and $F_2(\theta,\cdot)$ with $p\in\{m_1,m_2,2\}$,
we are able to obtain the following estimate for $\bar{s}\in\{-\sigma,s\}$ and $j\in\{1,2\}$:
\begin{align*}
&(1+t)^{\frac{n(2-m_j)}{4m_j\sigma}+\frac{\bar{s}+\sigma}{2\sigma}}\left\|\int_0^t\partial_t^2\ml{K}_2(t-\theta,|D|)F_j(\theta,\cdot)\mathrm{d}\theta\right\|_{\dot{H}^{\bar{s}+\sigma}}\\
&\lesssim (1+t)^{\frac{n(2-m_j)}{4m_j\sigma}+\frac{\bar{s}+\sigma}{2\sigma}}\int_0^{\frac{t}{2}}(1+t-\theta)^{-\frac{n(2-m_j)}{4m_j\sigma}-\frac{\bar{s}+2\sigma}{2\sigma}}\|F_j(\theta,\cdot)\|_{\dot{H}^{\bar{s}+\sigma}\cap L^{m_j}}\mathrm{d}\theta\\
&\quad+(1+t)^{\frac{n(2-m_j)}{4m_j\sigma}+\frac{\bar{s}+\sigma}{2\sigma}}\int_{\frac{t}{2}}^t(1+t-\theta)^{-\frac{1}{2}}\|F_j(\theta,\cdot)\|_{\dot{H}^{\bar{s}+\sigma}}\mathrm{d}\theta\\
&\lesssim (1+t)^{-\frac{1}{2}}\int_0^{\frac{t}{2}}(1+\theta)^{-\frac{n}{2\sigma}(\frac{2}{\max\{m_1,m_2\}}-\frac{1}{m_j})}\mathrm{d}\theta\|\ml{V}\|_{\ml{Y}^s(T)}^2+(1+t)^{-\frac{n}{2\sigma}(\frac{2}{\max\{m_1,m_2\}}-\frac{1}{m_j})+\frac{1}{2}}\|\ml{V}\|_{\ml{Y}^s(T)}^2,
\end{align*}
where we also considered the asymptotic relations $1+t-\theta\approx 1+t$ if $\theta\in[0,\frac{t}{2}]$ and $1+\theta\approx 1+t$ if $\theta\in[\frac{t}{2},t]$. Let us recall our condition, namely,
\begin{align*}
-\tfrac{n}{2\sigma}\left(\tfrac{2}{\max\{m_1,m_2\}}-\tfrac{1}{m_j}\right)+\tfrac{1}{2}\leqslant0\ \mbox{for all}\ j\in\{1,2\}.
\end{align*}
It yields 
\begin{align*}
\sup\limits_{t\in[0,T]}\left(\sum\limits_{j\in\{1,2\}}(1+t)^{\frac{n(2-m_j)}{4m_j\sigma}+\frac{\bar{s}+\sigma}{2\sigma}}\left\|\int_0^t\partial_t^2\ml{K}_2(t-\theta,|D|)F_j(\theta,\cdot)\mathrm{d}\theta\right\|_{\dot{H}^{\bar{s}+\sigma}}\right)\lesssim\|\ml{V}\|_{\ml{Y}^s(T)}^2
\end{align*}
for $\bar{s}\in\{-\sigma,s\}$. Combining with the estimates in Proposition \ref{Prop-Data} in the representations \eqref{Nonlinear-Rep-01} and \eqref{Nonlinear-Rep-02}, one gets
\begin{align*}
\|\partial_tu^{\nlin}\|_{\ml{Y}^s_{1}(T)}+\|\partial_tv^{\nlin}\|_{\ml{Y}^s_{2}(T)}\lesssim\|\ml{V}\|_{\ml{Y}^s(T)}^2,
\end{align*}
which turns to our desired estimate \eqref{Crucial-03} via the estimate \eqref{Est-Linear} of linear parts. Analogously, by repeating the last steps and \cite[Pages 27-28]{Chen-Takeda=2023}, with the aid of the fractional Leibniz rule in Proposition \ref{fractionleibnizrule}, another crucial estimate \eqref{Crucial-04} also can be completed.

%\subsection{Instability for the MGT equations with distinct Laplacians}\label{Sub-Section=Distinct-Laplacians}
%\hspace{5mm}Let us turn to a natural generalization of the nonlocal linear MGT equations \eqref{Eq-Linearized-MGT} as follows:
%\begin{align}\label{Eq_distinct_Lapalcian_MGT}
%\begin{cases}
%\tau\partial_t^3\varphi+\partial_t^2\varphi+\mathcal{A}_1\varphi+(\delta+\tau)\mathcal{A}_2\partial_t\varphi=0,&x\in\mb{R}^n,\ t\in\mb{R}_+,\\
%	\varphi(0,x)=\varphi_0(x),\ \partial_t\varphi(0,x)=\varphi_1(x),\ \partial_t^2\varphi(0,x)=\varphi_2(x),&x\in\mb{R}^n,
%\end{cases}
%\end{align}
%where the fractional Laplacians are $\ml{A}_j:=(-\Delta)^{\sigma_j}$ with $\sigma_j\in\mb{R}_+$ for $j\in\{1,2\}$. It singular limit model as $\tau\downarrow0$, namely, the evolution equation $\partial_t^2\varphi^0+\ml{A}_1\varphi^0+\delta\ml{A}_2\partial_t\varphi^0=0$, is well-studied by \cite{Chen-Reissig=2025} and references therein, which show some well-posedness results associated with decay estimates of solutions for any $\sigma_1,\sigma_2\in\mb{R}_+$. Surprising,  the MGT equations \eqref{Eq_distinct_Lapalcian_MGT} with distinct Laplacians $\sigma_1\neq\sigma_2$ are ill-posedness in the sense of instability.
%\begin{theorem}
%Then
%\end{theorem}
%\begin{proof}
%\end{proof}

\section*{Acknowledgments}
Wenhui Chen is supported in part by the National Natural Science Foundation of China (grant No. 12301270), Guangdong Basic and Applied Basic Research Foundation (grant No. 2025A1515010240). The author thanks Alessandro Palmieri (University of Bari) for some suggestions on Proposition \ref{fractionembedd}.


\begin{thebibliography}{99}
%%%%%%
\bibitem{Abramov-1999}
\newblock O.V. Abramov.
\newblock \emph{High-Intensity Ultrasonics: Theory and Industrial Applications}.
\newblock Vol. 10, CRC Press, 1999.
%%%%%%
\bibitem{Albritton-Ogden=2024} 
\newblock D. Albritton, W.J. Ogden.
\newblock Remarks on the complex Euler equations.
\newblock \emph{Commun. Pure Appl. Anal.} \textbf{23} (2024), no. 10, 1407--1422.
%%%%%%
\bibitem{Bjock=1966}
\newblock G. Bj\"orck.
\newblock Linear partial differential operators and generalized distributions.
\newblock \emph{Ark. Mat.} \textbf{6} (1966), 351--407.
%%%%%%
\bibitem{B-L-2020}
\newblock M. Bongarti, C. Sutthirut, I. Lasiecka.
\newblock Vanishing relaxation time dynamics of the Jordan Moore-Gibson-Thompson equation arising in nonlinear acoustics.
\newblock \emph{J. Evol. Equ.} \textbf{21} (2021), no. 3, 3553--3584.
%%%%%%
\bibitem{Bourdaud-Reissig-Sickel=2003}
\newblock G. Bourdaud, M. Reissig, W. Sickel.
\newblock Hyperbolic equations, function spaces with exponential weights and Nemytskij operators.
\newblock \emph{Ann. Mat. Pura Appl. (4)} \textbf{182} (2003), no. 4, 409--455.
%%%%%%
\bibitem{Chen-Lu-Wang=2023}
\newblock J. Chen, Y. Lu, B. Wang.
\newblock Global Cauchy problems for the nonlocal (derivative) NLS in $E^s_{\sigma}$.
\newblock \emph{J. Differential Equations} \textbf{344} (2023), 767--806.
%%%%%%
\bibitem{Chen-Wang-Wang=2023}
\newblock J. Chen, B. Wang, Z. Wang.
\newblock Complex valued semi-linear heat equations in super-critical spaces $E^s_{\sigma}$.
\newblock \emph{Math. Ann.} \textbf{386} (2023), no. 3-4, 1351--1389.
%%%%%%
\bibitem{Chen-Gong=2024}
\newblock W. Chen, J. Gong.
\newblock Some asymptotic profiles for the viscous Moore-Gibson-Thompson equation in the $L^q$ framework.
\newblock \emph{J. Math. Anal. Appl.} \textbf{540} (2024), no. 2, Paper No. 128641.
%%%%%%
\bibitem{Chen-Ikehata=2021}
\newblock W. Chen, R. Ikehata.
\newblock The Cauchy problem for the Moore-Gibson-Thompson equation in the dissipative case.
\newblock \emph{J. Differential Equations} \textbf{292} (2021), 176--219.
%%%%%%
\bibitem{Chen-Ma-Qin=2025}
\newblock W. Chen, M. Ma, X. Qin.
\newblock $L^p-L^q$ estimates for the dissipative and conservative Moore-Gibson-Thompson equations.
\newblock \emph{Preprint} (2025). arXiv:2503.01527 
%%%%%%
\bibitem{Chen-Reissig=2025}
\newblock W. Chen, M. Reissig.
\newblock Global in-time rough large data solution to complex-valued semilinear damped evolution equations.
\newblock \emph{Preprint} (2025).
%%%%%%
\bibitem{Chen-Takeda=2023}
\newblock W. Chen, H. Takeda. 
\newblock Asymptotic behaviors for the Jordan-Moore-Gibson-Thompson equation in the viscous case.
\newblock \emph{Nonlinear Anal.} \textbf{234} (2023), Paper No. 113316, 36 pp.
%%%%%%
\bibitem{Conejero-Lizama-Rodenas-2015}
\newblock J.A. Conejero, C. Lizama, F. Rodenas.
\newblock Chaotic behaviour of the solutions of the Moore-Gibson-Thompson equation.
\newblock \emph{Appl. Math. Inf. Sci.} \textbf{9} (2015), no. 5, 2233--2238.
%%%%%%
\bibitem{Constantin-Lax-Majda=1985}
\newblock P. Constantin, P.D. Lax, A. Majda.
\newblock A simple one-dimensional model for the three-dimensional vorticity equation.
\newblock \emph{Comm. Pure Appl. Math.} \textbf{38} (1985), no. 6, 715--724.
%%%%%%
\bibitem{Conti-Pata-Pellicer-Quintanilla=2020}
\newblock M. Conti, V. Pata, M. Pellicer, R. Quintanilla.
\newblock On the analyticity of the MGT-viscoelastic plate with heat conduction.
\newblock \emph{J. Differential Equations} \textbf{269} (2020), no. 10, 7862--7880.
%%%%%%
\bibitem{Coulouvrat-1992}
\newblock F. Coulouvrat. 
\newblock On the equations of nonlinear acoustics.
\newblock \emph{J. de Acoustique} \textbf{5} (1992), 321--359.
%%%%%%
\bibitem{D'Abbicco-Ebert-Lucente=2017}
\newblock M. D'Abbicco, M.R. Ebert, S. Lucente. 
\newblock Self-similar asymptotic profile of the solution to a nonlinear evolution equation with critical dissipation.
\newblock \emph{Math. Methods Appl. Sci.} \textbf{40} (2017), no. 18, 6480--6494.
%%%%%%
\bibitem{Dell-Pata=2017}
\newblock F. Dell'Oro, V. Pata.
\newblock On the Moore-Gibson-Thompson equation and its relation to linear viscoelasticity.
\newblock \emph{Appl. Math. Optim.} \textbf{76} (2017), no. 3, 641--655.
%%%%%%
\bibitem{Dreyer-Krauss-Bauer-Ried-2000}
\newblock T. Dreyer, W. Krauss, E. Bauer, R. Riedlinger.
\newblock Investigations of compact self focusing transducers using stacked piezoelectric elements for strong sound pulses in therapy.
\newblock \emph{Ultrasonics Sympos.} Vol. 2, IEEE (2000), 1239--1242.
%%%%%%
\bibitem{Feichtinger-Grochenig-Li-Wang=2021}
\newblock H.G. Feichtinger, K. Gr\"ochenig, K. Li, B. Wang.
\newblock Navier-Stokes equation in super-critical spaces $E_{p,q}^s$.
\newblock \emph{Ann. Inst. H. Poincar\'e C Anal. Non Lin\'eaire} \textbf{38} (2021), no. 1, 139--173.
%%%%%%
\bibitem{Foias-Temam=1989}
\newblock C. Foias, R. Temam.
\newblock Gevrey class regularity for the solutions of the Navier-Stokes equations.
\newblock \emph{J. Funct. Anal.} \textbf{87} (1989), no. 2, 359--369.
%%%%%%
\bibitem{Gelfand-Shilov=1968}
\newblock I.M. Gelfand, G.E. Shilov.
\newblock \emph{Generalized Functions. Vol. 2. Spaces of Fundamental and Generalized Functions}.
\newblock Translated from the Russian by Morris D. Friedman, Amiel Feinstein and Christian P. Peltzer
Academic Press, New York-London, 1968.
%%%%%%
\bibitem{Grafakos=2009}
\newblock L. Grafakos.
\newblock \emph{Modern Fourier Analysis}.
\newblock Grad. Texts in Math., 250. Springer, New York, 2009.
%%%%%%
\bibitem{Grafakos-Oh-2014} 
\newblock L. Grafakos, S. Oh.
\newblock The Kato-Ponce inequality.
\newblock \emph{Comm. Partial Differential Equations} \textbf{39} (2014), no. 6, 1128--1157.
%%%%%%
\bibitem{Hajaiej-Molinet-Ozawa-Wang-2011}
\newblock H. Hajaiej, L. Molinet, T. Ozawa, B. Wang.
\newblock Necessary and sufficient conditions for the fractional Gagliardo-Nirenberg inequalities and applications to Navier-Stokes and generalized boson equations.
\newblock \emph{Harmonic analysis and nonlinear partial differential equations}, 159--175,
RIMS K\^oky\^uroku Bessatsu, B26, Res. Inst. Math. Sci. (RIMS), Kyoto, 2011.
%%%%%%
\bibitem{Jordan-2014}
\newblock P.M. Jordan.
\newblock Second-sound phenomena in inviscid, thermally relaxing gases.
\newblock \emph{Discrete Contin. Dyn. Syst. Ser. B} \textbf{19} (2014), no. 7, 2189--2205.
%%%%%%
\bibitem{Kaltenbacher-Landes-Hoffelner-Simkovics-2002}
\newblock M. Kaltenbacher, H. Landes, J. Hoffelner, R. Simkovics.
\newblock Use of modern simulation for industrial applications of high power ultrasonics.
\newblock \emph{Ultrasonics Sympos.} Vol. 1, IEEE (2002), 673--678. 
%%%%%%
\bibitem{Kaltenbacher-Lasiecka-Marchand-2011}
\newblock B. Kaltenbacher, I. Lasiecka, R. Marchand.
\newblock Wellposedness and exponential decay rates for the Moore-Gibson-Thompson equation arising in high intensity ultrasound.
\newblock \emph{Control Cybernet.} \textbf{40} (2011), no. 4, 971--988.
%%%%%%
\bibitem{Kaltenbacher-Lasiecka-Pos-2012}
\newblock B. Kaltenbacher, I. Lasiecka, M.K. Pospieszalska. 
\newblock Well-posedness and exponential decay of the energy in the nonlinear Jordan-Moore-Gibson-Thompson equation arising in high intensity ultrasound.
\newblock \emph{Math. Models Methods Appl. Sci.} \textbf{22} (2012), no. 11, 1250035, 34 pp.
%%%%%%
\bibitem{Kaltenbacher-Nikolic-2019}
\newblock B. Kaltenbacher, V. Nikoli\'c.
\newblock The Jordan-Moore-Gibson-Thompson equation: well-posedness with quadratic gradient nonlinearity and singular limit for vanishing relaxation time.
\newblock \emph{Math. Models Methods Appl. Sci.} \textbf{29} (2019), no. 13, 2523--2556.
%%%%%% 
\bibitem{Kaltenbacher-Niko-2021}
\newblock B. Kaltenbacher, V. Nikoli\'c.
\newblock The inviscid limit of third-order linear and nonlinear acoustic equations.
\newblock \emph{SIAM J. Appl. Math.} \textbf{81} (2021), no. 4, 1461--1482.
%%%%%%
\bibitem{Li-Sinai=2008}
\newblock D. Li, Y.G. Sinai.
\newblock Blow ups of complex solutions of the 3D Navier-Stokes system and renormalization group method.
\newblock \emph{J. Eur. Math. Soc. (JEMS)} \textbf{10} (2008), no. 2, 267--313.
%%%%%%
\bibitem{Lighthill=1956}
\newblock M.J. Lighthill.
\newblock Viscosity effects in sound waves of finite amplitude. 
\newblock \emph{Surveys in mechanics, Cambridge, at the University Press} (1956), 250--351.
%%%%%%
\bibitem{Marchand-McDevitt-Triggiani-2012}
\newblock R. Marchand, T. McDevitt, R. Triggiani.
\newblock An abstract semigroup approach to the third-order Moore-Gibson-Thompson partial differential equation arising in high-intensity ultrasound: structural decomposition, spectral analysis, exponential stability.
\newblock \emph{Math. Methods Appl. Sci.} \textbf{35} (2012), no. 15, 1896--1929.
%%%%%%
\bibitem{MooreGibson1960} 
\newblock F.K. Moore, W.E. Gibson.
\newblock Propagation of weak disturbances in a gas subject to relaxation effect.
\newblock \emph{J. Aero/Space Sci.} \textbf{27} (1960), no. 2, 117--127.
%%%%%%
\bibitem{Nakanishi-Wang=2025}
\newblock K. Nakanishi, B. Wang.
\newblock Global wellposedness of general nonlinear evolution equations for distributions on the Fourier half space.
\newblock \emph{J. Funct. Anal.} \textbf{289} (2025), no. 8, Paper No. 111004.
%%%%%%
\bibitem{Niko-Winker=2024}
\newblock V. Nikoli\'c, M. Winkler.
\newblock $L^{\infty}$ blow-up in the Jordan-Moore-Gibson-Thompson equation.
\newblock \emph{Nonlinear Anal.} \textbf{247} (2024), Paper No. 113600, 23 pp.
%%%%%%
\bibitem{Pellicer-Said-Houari=2019}
\newblock M. Pellicer, B. Said-Houari. 
\newblock Wellposedness and decay rates for the Cauchy problem of the Moore-Gibson-Thompson equation arising in high intensity ultrasound.
\newblock \emph{Appl. Math. Optim.} \textbf{80} (2019), no. 2, 447--478.
%%%%%%
\bibitem{Racke-Said-2020}
\newblock R. Racke, B. Said-Houari.
\newblock Global well-posedness of the Cauchy problem for the 3D Jordan-Moore-Gibson-Thompson equation.
\newblock \emph{Commun. Contemp. Math.} \textbf{23} (2021), no. 7, Paper No. 2050069, 39 pp.
%%%%%%
\bibitem{Said-Houari=Besov=2022}
\newblock B. Said-Houari.
\newblock Global existence for the Jordan-Moore-Gibson-Thompson equation in Besov spaces.
\newblock \emph{J. Evol. Equ.} \textbf{22} (2022), no. 2, Paper No. 32, 40 pp.
%%%%%%
\bibitem{Said-Houari=Large-Norm=2022}
\newblock B. Said-Houari.
\newblock Global well-posedness of the Cauchy problem for the Jordan-Moore-Gibson-Thompson equation with arbitrarily large higher-order Sobolev norms.
\newblock \emph{Discrete Contin. Dyn. Syst.} \textbf{42} (2022), no. 9, 4615--4635.
%%%%%%
\bibitem{Sverak=2017}
\newblock V. \v{S}ver\'ak.
\newblock On certain models in the PDE theory of fluid flows.
\newblock \emph{Journ\'ees \'Equations aux d\'eriv\'ees partielles} (2017), 1--26.
%%%%%%
\bibitem{Thompson1972} 
\newblock P.A. Thompson.
\newblock \emph{Compressible Fluid Dynamics}.
\newblock McGraw-Hill, New York, 1972.
%%%%%%
\bibitem{Wang=2024}
\newblock B. Wang.
\newblock Global Cauchy problem for the complex NLKG and sinh-Gordon equations in super-critical spaces.
\newblock \emph{J. Funct. Anal.} \textbf{287} (2024), no. 2, Paper No. 110458, 64 pp.
%%%%%%
\bibitem{Wang-Zhao-Guo=2006}
\newblock B. Wang, L. Zhao, B. Guo.
\newblock Isometric decomposition operators, function spaces $E^{\lambda}_{p,q}$ and applications to nonlinear evolution equations.
\newblock \emph{J. Funct. Anal.} \textbf{233} (2006), no. 1, 1--39.
%%%%%%
\end{thebibliography}
\end{document}